\newcommand{\email}[1]{\href{mailto:#1}{#1}}
\numberwithin{equation}{section}
\newtheorem{theorem}{Theorem}
\newtheorem{proposition}[theorem]{Proposition}
\newtheorem{lemma}[theorem]{Lemma}
\theoremstyle{remark}
\newtheorem{remark}[theorem]{Remark}
\theoremstyle{definition}
\newcommand{\st}{\,:\,}
\newcommand{\Real}{\mathbb{R}}
\DeclareRobustCommand{\bvec}[1]{\boldsymbol{#1}}
  \renewcommand{\bvec}[1]{#1}%
\newcommand{\uvec}[1]{\underline{\bvec{#1}}}
\newcommand{\cvec}[1]{\bvec{\mathcal{#1}}}
\newcommand{\ud}{\,\mathrm{d}}
\newcommand{\rotation}[1]{\varrho_{#1}}
\DeclareMathOperator{\GRAD}{\bf grad}
\DeclareMathOperator{\CURL}{\bf curl}
\DeclareMathOperator{\DIV}{div}
\DeclareMathOperator{\ROT}{rot}
\DeclareMathOperator{\VROT}{\bf rot}
\newcommand{\Hcurl}[1]{\bvec{H}(\CURL;#1)}
\newcommand{\Hrot}[1]{\bvec{H}(\ROT;#1)}
\newcommand{\Hdiv}[1]{\bvec{H}(\DIV;#1)}
\newcommand{\uHgrad}[1][T]{\underline{X}_{\GRAD,#1}^k}
\newcommand{\uHrot}{\underline{\bvec{X}}_{\ROT,F}^k}
\newcommand{\uHcurl}[1][T]{\underline{\bvec{X}}_{\CURL,#1}^k}
\newcommand{\uHdiv}[1][T]{\underline{\bvec{X}}_{\DIV,#1}^k}
\newcommand{\uIgrad}[1][T]{\underline{I}_{\GRAD,#1}^k}
\newcommand{\uIrot}[1][F]{\uvec{I}_{\ROT,#1}^k}
\newcommand{\uhIrot}[1][F]{\accentset{\bullet}{\uvec{I}}_{\ROT,#1}^k}
\newcommand{\uIcurl}[1][T]{\uvec{I}_{\CURL,#1}^k}
\newcommand{\uIdiv}[1][T]{\uvec{I}_{\DIV,#1}^{k}}
\newcommand{\lproj}[2]{\pi_{\mathcal{P},#2}^{#1}}
\newcommand{\vlproj}[2]{\boldsymbol{\pi}_{\cvec{P},#2}^{#1}}
\newcommand{\Rproj}[2]{\bvec{\pi}_{\cvec{R},#2}^{#1}}
\newcommand{\ROproj}[2]{\bvec{\pi}_{\cvec{R},#2}^{\perp,#1}}
\newcommand{\Gproj}[2]{\bvec{\pi}_{\cvec{G},#2}^{#1}}
\newcommand{\GOproj}[2]{\bvec{\pi}_{\cvec{G},#2}^{\perp,#1}}
\newcommand{\uGT}[1][k]{\uvec{G}_T^{#1}}
\newcommand{\uGFh}[1][k]{\accentset{\bullet}{\uvec{G}}_F^{#1}}
\newcommand{\uGF}[1][k]{\uvec{G}_F^{#1}}
\newcommand{\GOT}{\bvec{G}_T^{\perp,k}}
\newcommand{\GRF}{\bvec{G}_{\cvec{R},F}^{k-1}}
\newcommand{\GROF}{\bvec{G}_{\cvec{R},F}^{\perp,k}}
\newcommand{\GRT}{\bvec{G}_{\cvec{R},T}^{k-1}}
\newcommand{\GROT}{\bvec{G}_{\cvec{R},T}^{\perp,k}}
\newcommand{\uCT}[1][]{\uvec{C}_T^{k\ifthenelse{\equal{#1}{}}{}{,#1}}}
\newcommand{\CGT}[1][k-1]{\bvec{C}_{\cvec{G},T}^{#1}}
\newcommand{\CGOT}[1][k]{\bvec{C}_{\cvec{G},T}^{\perp,#1}}
\newcommand{\cGpF}{G_{\partial F}^k}
\newcommand{\cGE}{G_E^k}
\newcommand{\cGF}{\accentset{\bullet}{\bvec{G}}_F^k}
\newcommand{\cGT}{\accentset{\bullet}{\bvec{G}}_T^k}
\newcommand{\cCF}{C_F^k}
\newcommand{\hcCF}{\accentset{\bullet}{C}_F^k}
\newcommand{\cCT}{\accentset{\bullet}{\bvec{C}}_T^k}
\newcommand{\cDT}{D_T^k}
\newcommand{\trt}{\gamma_{\rm t}}
\newcommand{\trF}{\gamma_F^{k+1}}
\newcommand{\trFtilde}{\tilde{\gamma}_F^{k+1}}
\newcommand{\trFt}{\bvec{\gamma}_{{\rm t},F}^k}
\newcommand{\pgrad}{P_{\GRAD,T}^{k+1}}
\newcommand{\pcurl}{\bvec{P}_{\CURL,T}^k}
\newcommand{\pdiv}{\bvec{P}_{\DIV,T}^k}
\newcommand{\FT}{\mathcal{F}_T}
\newcommand{\ET}{\mathcal{E}_T}
\newcommand{\EF}{\mathcal{E}_F}
\newcommand{\FE}{\mathcal{F}_E}
\newcommand{\VT}{\mathcal{V}_T}
\newcommand{\VF}{\mathcal{V}_F}
\newcommand{\normal}{\bvec{n}}
\newcommand{\tangent}{\bvec{t}}
\newcommand{\Poly}[2][]{\mathcal{P}_{#1}^{#2}}
\newcommand{\Qoly}[2][]{\mathcal{Q}_{#1}^{#2}}
\newcommand{\bQoly}[1]{\boldsymbol{\mathcal{Q}}^{#1}}
\newcommand{\Roly}[1]{\boldsymbol{\mathcal{R}}^{#1}}
\newcommand{\Goly}[1]{\boldsymbol{\mathcal{G}}^{#1}}
\newcommand{\NE}[1]{\boldsymbol{\mathcal{N}}^{#1}}
\newcommand{\RT}[1]{\boldsymbol{\mathcal{RT}}^{#1}}
\newcommand{\norm}[2][]{\|#2\|_{#1}}
\DeclareMathOperator{\Ker}{Ker}
\DeclareMathOperator{\Image}{Im}
\DeclareMathOperator{\card}{card}
\begin{document}

\title{Fully discrete polynomial de Rham sequences of arbitrary degree on polygons and polyhedra}
\author[1]{Daniele A. Di Pietro}
\author[2]{J\'er\^ome Droniou}
\author[3]{Francesca Rapetti}
\affil[1]{IMAG, Univ Montpellier, CNRS, Montpellier, France, \email{daniele.di-pietro@umontpellier.fr}}
\affil[2]{School of Mathematics, Monash University, Melbourne, Australia, \email{jerome.droniou@monash.edu}}
\affil[3]{Laboratoire J. A. Dieudonn\'e, Univ. C\^{o}te d'Azur, Nice, France \email{frapetti@univ-cotedazur.fr}}

\maketitle

\begin{abstract}
  In this work, merging ideas from compatible discretisations and polyhedral methods, we construct novel fully discrete polynomial de Rham sequences of arbitrary degree on polygons and polyhedra.
  The spaces and operators that appear in these sequences are directly amenable to computer implementation.
  Besides proving exactness, we show that the usual three-dimensional sequence of trimmed Finite Element spaces forms, through appropriate interpolation operators, a commutative diagram with our sequence, which ensures suitable approximation properties.
  A discussion on reconstructions of potentials and discrete $L^2$-products completes the exposition.
  \medskip\\
      {\bf Key words.} Fully discrete de Rham sequences, compatible discretisations, polyhedral methods, mixed methods
      \medskip\\
          {\bf MSC2010.} 65N08, 65N30, 65N99
\end{abstract}

\section{Introduction}

In this paper we construct novel fully discrete polynomial de Rham sequences of arbitrary degree on polygons and polyhedra.
By fully discrete, we mean that both the spaces and vector operators that appear in the sequence are directly amenable to computer implementation.
This construction can be used to design stable schemes for complex problems, such as the ones encountered in computational electromagnetism, on meshes far more general than classical Finite Element methods.
In particular, the supported meshes can contain general polyhedral elements and nonmatching interfaces, enabling advanced computational strategies such as nonconforming mesh refinement, mesh coarsening, etc.
\medskip

The ideas underlying this work result from the confluence of two streams of research that have gathered an enormous amount of attention in the numerical community over the last years: compatible discretisations and polytopal methods.
\smallskip

Compatible discretisations aim at preserving structural features of the continuous model at the discrete level.
Such features are instrumental to obtaining the stability and consistency properties required for convergence when non-trivial operators and domains are considered, as is the case in computational electromagnetism (see Section \ref{sec:motivation}).
The origins of compatible discretisations can be tracked back to, e.g., \cite{Weil:52,Whitney:57,Dodziuk:76} for the mathematical community and \cite{Yee:66,Tonti:75,Tonti:75*1,Bossavit:88,Frankel:97} for the electromagnetic one; see also the survey paper \cite{Hiptmair:02}.
The importance of concepts from differential geometry and algebraic topology in the formulation of compatible discretisations is nowadays widely recognised; see, e.g., \cite{Mattiussi:97,Bossavit:98,Bochev.Hyman:06,Arnold.Falk.ea:06,Desbrun.Kanso.ea:08,Gerritsma:11,Teixeira:13,Arnold:18}.
As a matter of fact, by reformulating the continuous problems in terms of differential forms, one gets some indications on the design of suitable Finite Element (FE) discretisations.
Specifically:
\begin{compactenum}[(i)]
\item the choice of the degrees of freedom (DOFs) should reflect the nature (and global regularity properties) of the fields they represent;
\item the discrete spaces and operators should form an exact sequence;
\item the operator that maps DOFs to forms commutes with the continuous/discrete exterior derivative.
\end{compactenum}
An important issue when generating high-order discrete de Rham sequences in the FE spirit lies in the choice of the bases and of the DOFs.
A reformulation of classical moments that underlines their geometrical aspects has been recently proposed in \cite{Bonazzoli.Rapetti:17}.
This reformulation has been made possible by the precursor works \cite{Rapetti.Bossavit:09,Christiansen.Rapetti:16}, where new DOFs in terms of weights of forms on small chains have been proposed.
These weights have shed new light on the high-order approximations originally proposed by N\'ed\'elec \cite{Nedelec:80} confirming, also for the high-order version, the tight relation with Whitney forms \cite{Whitney:57}; see \cite{Bossavit:88,Bossavit:01,Bossavit:02} for the low-order case.
A generalisation to the high-order case of the relations between the moments of a field and those of its potential has been proposed in \cite{Alonso-Rodriguez.Rapetti:18}.

The extension of the FE approach to more general meshes is, however, not straightforward.
The main reason is that, in order to construct a conforming FE discretisation, one has to devise discrete spaces that, through the single-valuedness of DOFs at element boundaries, satisfy suitable global continuity requirements.
Recent efforts in this direction have been made in, e.g., \cite{Gillette.Rand.ea:16,Chen.Wang:17} (see also references therein), focusing mainly on the lowest-order case and with some limitations on the element shapes in three dimensions.
\medskip

The problem of devising discretisation methods that support more general meshes than classical FE (including, e.g., polytopal elements and nonmatching interfaces) has been recently tackled with great impetus by the numerical community.
As pointed out above, supporting general meshes paves the way to computational strategies that are typically not accessible to traditional conforming FE (nonconforming mesh refinement, mesh coarsening, seamless handling of fractures and microstructures, etc.).
We will focus here only on those developments that bear relations to the approach proposed in this work, and refer the reader to the preface of \cite{Di-Pietro.Droniou:20} for a literature review of broader scope.

Let us start with lowest-order methods that fall within the category of compatible discretisations.
Mimetic Finite Differences (MFD) are derived by mimicking the Stokes theorem to formulate discrete counterparts of differential operators and $L^2$-products \cite{Beirao-da-Veiga.Lipnikov.ea:14}.
Their extension to polytopal meshes has been first carried out in \cite{Kuznetsov.Lipnikov.ea:04,Lipnikov.Shashkov.ea:06}, then analysed in \cite{Brezzi.Lipnikov.ea:05,Brezzi.Buffa.ea:09}; see also \cite{Droniou.Eymard.ea:10} for a link with the Mixed Hybrid Finite Volume (MHFV) methods of \cite{Droniou.Eymard:06,Eymard.Gallouet.ea:10} and \cite[Section 2.5]{Di-Pietro.Ern.ea:14} along with \cite[Section 3.5]{Di-Pietro.Ern:17} for links with Hybrid High-Order (HHO) methods.
In the Discrete Geometric Approach (DGA), originally introduced in \cite{Codecasa.Specogna.ea:07} and extended to polyhedral meshes in \cite{Codecasa.Specogna.ea:09,Codecasa.Specogna.ea:10}, as well as in Compatible Discrete Operators \cite{Bonelle.Ern:14,Bonelle.Di-Pietro.ea:15}, formal links with the continuous operators are expressed in terms of Tonti diagrams \cite{Tonti:75,Tonti:13}.
Similarly to the approach pursued here, MFD, DGA, and CDO methods work on discrete unknowns and rely on discrete counterparts of the vector operators.
Contrary to the present work, however, they are typically limited to the lowest-order and their analysis often relies on an interplay of functional and algebraic arguments that is not required in our presentation.

The development of high-order schemes is more recent.
A high-order approach with structure-preserving features is provided by the Virtual Element Method (VEM); see \cite{Beirao-da-Veiga.Brezzi.ea:13}.
VEM can be described as FE methods where explicit expressions for the basis functions are not available at each point; hence the term ``virtual'' in reference to the function space they span.
The DOFs are selected so as to enable the computation of polynomial projections of virtual functions and vector operators, which are used in turn to formulate local contributions involving consistency and stabilisation terms.
An exact de Rham sequence of virtual spaces on polyhedra has been recently proposed in \cite{Beirao-da-Veiga.Brezzi.ea:16}, with polynomial degrees decreasing by one at each application of the exterior derivative (other virtual sequences are presently under investigation \cite{Brezzi.Marini:19}, see also the related works \cite{Beirao-da-Veiga.Brezzi.ea:18,Beirao-da-Veiga.Brezzi.ea:18*1} concerning applications to magnetostatics).

Owing to the variational crime committed when taking projections on polynomial spaces, the exactness of the virtual sequence cannot be directly exploited to obtain stable numerical approximations.
The approach proposed in this work, inspired by the HHO literature \cite{Di-Pietro.Ern:15,Di-Pietro.Ern:17,Di-Pietro.Droniou:20}, aims at establishing the exactness property for \emph{fully discrete} de Rham sequences, i.e., sequences involving spaces of (polynomial) discrete unknowns and discrete counterparts of vector operators acting thereon.
The starting point is to identify arbitrary-order reconstructions of vector operators in full polynomial spaces.
These reconstructions allow one to identify appropriate sets of discrete unknowns, which play the role of DOFs in standard FE (or VEM). 
To ensure the compatibility with the choice of unknowns, each discrete vector operator is attached to the appropriate geometric entities: in three space dimensions, the discrete gradient has components on the edges, faces, and inside the polyhedron; the discrete curl has components at faces and inside the polyhedron; the discrete divergence has only one component inside the polyhedron.
The full reconstructions of vector operators cannot be directly used to form an exact sequence, but their study permits to identify the modifications required to recover exactness.
Specifically, an exact sequence is obtained by restricting the domains/co-domains of the operators in the middle of the sequence and taking the $L^2$-orthogonal projections of the full vector operator reconstructions on these spaces.
Crucially, the proof of exactness relies on purely discrete arguments, that do not involve spaces of non-polynomial functions.
The sequence we focus on is constructed so that all the spaces involved have the same polynomial degree and so that, through appropriate interpolation operators, it forms commutative diagrams with the usual sequence of trimmed FE spaces, which warrants suitable approximation properties.
To complete the exposition, we also show how to reconstruct consistent potentials in each space and write discrete and consistent counterparts of $L^2$-products based on the latter.
The focus of this paper is on the development of the exact discrete sequence; applications are postponed to future works.

\medskip
The rest of this work is organised as follows:
in Section \ref{sec:motivation} we provide a motivation for the present work by pinpointing the role of the de Rham sequence in proving the well-posedness of the electrostatic problem in mixed formulation;
in Section \ref{sec:basic.tools} we introduce the basic tools and notations;
in Sections \ref{sec:2d.sequence} and \ref{sec:3d.sequence} we construct fully discrete arbitrary-order exact sequences in two and three space dimensions, respectively.


\section{Motivation}\label{sec:motivation}

We start with a brief discussion that motivates the need for discretisation techniques that reproduce the exactness of the continuous de Rham complex and provide suitable $L^2$-inner products on discrete spaces. We consider the $\CURL \CURL$ magnetostatic problem, set on a bounded connected domain $\Omega\subset \Real^3$ without voids (that is, $\Omega$ has a zero second Betti number):
\begin{subequations}\label{eq:strong}
  \begin{alignat}{2}
    \bvec{\sigma} - \CURL\bvec{u} &= \bvec{0} &\qquad& \text{in $\Omega$},
    \\
    \CURL\bvec{\sigma} &= \bvec{f} &\qquad& \text{in $\Omega$},
    \\
    \DIV\bvec{u} &= 0 &\qquad& \text{in $\Omega$},
    \\
    \bvec{u}\times\normal &= \bvec{g} &\qquad& \text{on $\partial\Omega$},
  \end{alignat}
\end{subequations}
where $\bvec{f}\in\CURL\left(\Hcurl{\Omega}\right)$ and $\bvec{g}\in L^2(\partial\Omega)^3$.
As discussed in  \cite[Section 4.5.3]{Arnold:18}, and accounting for the assumptions on $\Omega$ (which imply that its space of 2-harmonic forms is trivial), the weak formulation of this problem is: Find $(\bvec{\sigma},\bvec{u})\in \Hcurl{\Omega}\times\Hdiv{\Omega}$ such that
\begin{equation}\label{eq:weak}
  A((\bvec{\sigma},\bvec{u}),(\bvec{\tau},\bvec{v}))
  =
  \langle\bvec{g},\trt\bvec{\tau}\rangle_{\partial\Omega}
  +\int_\Omega\bvec{f}\cdot\bvec{v}\qquad\forall (\bvec{\tau},\bvec{v})\in \Hcurl{\Omega}\times\Hdiv{\Omega},
\end{equation}
where $\trt$ denotes the tangential trace operator on $\partial\Omega$, $\langle\cdot,\cdot\rangle_{\partial\Omega}$ the duality pairing betweeen $H^{\frac12}(\partial\Omega)$ and $H^{-\frac12}(\partial\Omega)$, while the bilinear form $A: [\Hcurl{\Omega}\times\Hdiv{\Omega}]^2\to\Real$ is such that
\begin{equation}\label{eq:def.A}
  A((\bvec{\sigma},\bvec{u}),(\bvec{\tau},\bvec{v}))
  \coloneq\int_\Omega\bvec{\sigma}\cdot\bvec{\tau}-\int_\Omega\bvec{u}\cdot\CURL\bvec{\tau}+\int_\Omega\bvec{v}\cdot\CURL\bvec{\sigma}+\int_\Omega\DIV\bvec{u}\DIV\bvec{v}.
\end{equation}
The well-posedness of problem \eqref{eq:weak} classically \emph{requires} the bilinear form $A$ to satisfy the following inf-sup condition.

\begin{lemma}[Inf-sup condition for $A$] If $\Omega$ is bounded, connected and without holes, it holds, with $C>0$ only depending on $\Omega$,
\begin{multline}\label{eq:inf.sup.A}
  \sup_{(\bvec{\tau},\bvec{v})\in \Hcurl{\Omega}\times\Hdiv{\Omega}\backslash\{(\bvec{0},\bvec{0})\}}\frac{A((\bvec{\sigma},\bvec{u}),(\bvec{\tau},\bvec{v}))}{\norm[\Hcurl{\Omega}\times \Hdiv{\Omega}]{(\bvec{\tau},\bvec{v})}}
  \ge C\norm[\Hcurl{\Omega}\times\Hdiv{\Omega}]{(\bvec{\sigma},\bvec{u})}\\
  \forall (\bvec{\sigma},\bvec{u})\in \Hcurl{\Omega}\times\Hdiv{\Omega},
\end{multline}
where $\Hcurl{\Omega}$ and $\Hdiv{\Omega}$ are equipped with their standard norms.
\end{lemma}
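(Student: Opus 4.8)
The plan is to establish \eqref{eq:inf.sup.A} by the standard Banach--Ne\v{c}as--Babu\v{s}ka route: for a given $(\bvec{\sigma},\bvec{u})$ I will exhibit a test pair $(\bvec{\tau},\bvec{v})$ whose norm is controlled by that of $(\bvec{\sigma},\bvec{u})$ and for which $A((\bvec{\sigma},\bvec{u}),(\bvec{\tau},\bvec{v}))$ is bounded below by a constant times $\norm[\Hcurl{\Omega}\times\Hdiv{\Omega}]{(\bvec{\sigma},\bvec{u})}^2$; dividing by $\norm[\Hcurl{\Omega}\times\Hdiv{\Omega}]{(\bvec{\tau},\bvec{v})}$ then yields the claim. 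The two continuous ingredients I will lean on are the exactness of the de~Rham complex on $\Omega$ --- valid precisely because $\Omega$ is connected with trivial first and second Betti numbers, whence $\Image(\CURL)=\Ker(\DIV)$ --- and the accompanying Poincar\'e inequalities: there is $C_P>0$ such that $\norm[L^2(\Omega)]{\bvec{\xi}}\le C_P\norm[L^2(\Omega)]{\CURL\bvec{\xi}}$ for every $\bvec{\xi}\in\Hcurl{\Omega}$ orthogonal to $\Ker(\CURL)$, and $\norm[L^2(\Omega)]{\bvec{z}}\le C_P\norm[L^2(\Omega)]{\DIV\bvec{z}}$ for every $\bvec{z}\in\Hdiv{\Omega}$ orthogonal to $\Ker(\DIV)$.

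The test function superposes three elementary choices, each securing one piece of the norm. First, testing with $(\bvec{\tau},\bvec{v})=(\bvec{\sigma},\bvec{u})$ makes the two off-diagonal terms of \eqref{eq:def.A} cancel by their opposite signs, leaving $A((\bvec{\sigma},\bvec{u}),(\bvec{\sigma},\bvec{u}))=\norm[L^2(\Omega)]{\bvec{\sigma}}^2+\norm[L^2(\Omega)]{\DIV\bvec{u}}^2$ and hence control on $\norm[L^2(\Omega)]{\bvec{\sigma}}$ and $\norm[L^2(\Omega)]{\DIV\bvec{u}}$. Second, since $\DIV\CURL=0$ we have $\CURL\bvec{\sigma}\in\Hdiv{\Omega}$, and testing with $(\bvec{0},\CURL\bvec{\sigma})$ gives $A=\norm[L^2(\Omega)]{\CURL\bvec{\sigma}}^2$, which controls $\norm[L^2(\Omega)]{\CURL\bvec{\sigma}}$.

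The only remaining quantity is $\norm[L^2(\Omega)]{\bvec{u}}$, and this is the delicate step. I split $\bvec{u}=\CURL\bvec{\xi}+\bvec{z}$ into its $L^2$-orthogonal projections onto $\Ker(\DIV)$ and its complement; exactness identifies the first projection with $\CURL\bvec{\xi}$ for some $\bvec{\xi}\in\Hcurl{\Omega}$ that may be chosen orthogonal to $\Ker(\CURL)$, while the Poincar\'e inequalities give $\norm[L^2(\Omega)]{\bvec{z}}\le C_P\norm[L^2(\Omega)]{\DIV\bvec{u}}$ and $\norm[L^2(\Omega)]{\bvec{\xi}}\le C_P\norm[L^2(\Omega)]{\bvec{u}}$. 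Testing with $(-\bvec{\xi},\bvec{0})$ and using the orthogonality $\int_\Omega\bvec{z}\cdot\CURL\bvec{\xi}=0$ produces $A=\norm[L^2(\Omega)]{\CURL\bvec{\xi}}^2-\int_\Omega\bvec{\sigma}\cdot\bvec{\xi}$, where the single spurious cross-term is absorbed by Young's inequality against the $\norm[L^2(\Omega)]{\bvec{\sigma}}^2$ already secured. Since $\norm[L^2(\Omega)]{\bvec{u}}^2=\norm[L^2(\Omega)]{\CURL\bvec{\xi}}^2+\norm[L^2(\Omega)]{\bvec{z}}^2$, this recovers the missing control on $\bvec{u}$.

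Finally I take $(\bvec{\tau},\bvec{v})=\alpha(\bvec{\sigma},\bvec{u})+\beta(\bvec{0},\CURL\bvec{\sigma})+\gamma(-\bvec{\xi},\bvec{0})$ and tune the positive constants $\alpha,\beta,\gamma$, choosing $\gamma$ small enough that the cross-term is dominated, to obtain a lower bound $C\norm[\Hcurl{\Omega}\times\Hdiv{\Omega}]{(\bvec{\sigma},\bvec{u})}^2$. A direct check, using $\DIV\CURL\bvec{\sigma}=\bvec{0}$ and the Poincar\'e bound on $\bvec{\xi}$, shows that $\norm[\Hcurl{\Omega}\times\Hdiv{\Omega}]{(\bvec{\tau},\bvec{v})}$ is controlled by $\norm[\Hcurl{\Omega}\times\Hdiv{\Omega}]{(\bvec{\sigma},\bvec{u})}$, which closes the argument. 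I expect the main obstacle to be the third step: it is here that the topological hypotheses on $\Omega$ are genuinely used, both to guarantee that the divergence-free part of $\bvec{u}$ lies in $\Image(\CURL)$ and to validate the Poincar\'e inequality with a domain-dependent constant; care is also needed to take the orthogonal decomposition in the spaces carrying the natural (rather than essential) boundary conditions matching $\Hcurl{\Omega}$ and $\Hdiv{\Omega}$.
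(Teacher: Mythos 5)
Your proposal is correct and follows essentially the same route as the paper's proof: the same three test choices ($(\bvec{\sigma},\bvec{u})$, then $(\bvec{0},\CURL\bvec{\sigma})$, then a curl-preimage of the divergence-free part of $\bvec{u}$ obtained from the exactness relations $\Image\DIV=L^2(\Omega)$ and $\Image\CURL=\Ker\DIV$ together with the associated Poincar\'e-type bounds). The only cosmetic difference is that you assemble a single combined test function with tuned coefficients, whereas the paper bounds the supremum separately by each test and then combines the resulting inequalities.
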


\begin{proof}
Let $\mathcal S$ denote the left-hand side of \eqref{eq:inf.sup.A} and let us start by choosing $(\bvec{\tau},\bvec{v})=(\bvec{\sigma},\bvec{u})$ in \eqref{eq:def.A}, which readily gives
\begin{equation}\label{eq:stab.cont.1}
\mathcal S \ge \frac{\norm[L^2(\Omega)^3]{\bvec{\sigma}}^2+\norm[L^2(\Omega)]{\DIV \bvec{u}}^2}{\norm[\Hcurl{\Omega}\times\Hdiv{\Omega}]{(\bvec{\sigma},\bvec{u})}}
\end{equation}
We then choose, in \eqref{eq:def.A}, $\bvec{\tau}=\bvec{0}$ and $\bvec{v}=\CURL\bvec{\sigma}$, the latter belonging to $\Hdiv{\Omega}$ since $\Image\CURL \subset \Ker\DIV$.
This gives
$\mathcal S \ge \norm[L^2(\Omega)^3]{\CURL\bvec{\sigma}}$ which, combined with \eqref{eq:stab.cont.1}, leads to
\begin{equation}\label{eq:stab.cont.2}
\mathcal S\ge C\frac{\norm[\Hcurl{\Omega}]{\bvec{\sigma}}^2+\norm[L^2(\Omega)]{\DIV \bvec{u}}^2}{\norm[\Hcurl{\Omega}\times\Hdiv{\Omega}]{(\bvec{\sigma},\bvec{u})}}.
\end{equation}
Here and in the rest of the proof, $C$ is a generic strictly positive constant that does not depend on $(\bvec{\sigma},\bvec{u})$. To conclude the proof of \eqref{eq:inf.sup.A}, it remains to estimate the $L^2$-norm of $\bvec{u}$.
We split this function into
\[
\bvec{u}=\bvec{u}^\star+\bvec{u}^\perp\in \Ker\DIV \oplus (\Ker\DIV)^\perp=L^2(\Omega)^3,
\]
where the orthogonal is taken with respect to the $L^2$-inner product.
A consequence of the exactness relation
\begin{equation}\label{eq:exact.div.L2}
\Image\DIV=L^2(\Omega)
\end{equation}
of the de Rham sequence is that $\DIV:(\Ker\DIV)^\perp\to L^2(\Omega)$ is an isomorphism, and therefore has a continuous inverse mapping. This implies
\begin{equation}\label{eq:stab.cont.3}
\norm[L^2(\Omega)^3]{\bvec{u}^\perp}^2\le C \norm[L^2(\Omega)]{\DIV\bvec{u}^\perp}^2=C\norm[L^2(\Omega)]{\DIV\bvec{u}}^2\le
C\mathcal S \norm[\Hcurl{\Omega}\times\Hdiv{\Omega}]{(\bvec{\sigma},\bvec{u})},
\end{equation}
where the equality follows from $\DIV\bvec{u}=\DIV(\bvec{u}^\star+\bvec{u}^\perp)=\DIV\bvec{u}^\perp$ and \eqref{eq:stab.cont.2} was used in the last inequality. To estimate the $L^2$-norm of $\bvec{u}^\star$, we use the exactness relation
\begin{equation}\label{eq:exact.curl.div}
\Image\CURL=\Ker\DIV,
\end{equation}
which is valid thanks to the assumption on $\Omega$.
This relation entails that $\CURL:(\Ker\CURL)^\perp\to \Image\CURL=\Ker\DIV$ is an isomorphism, with a continuous inverse mapping; since $\bvec{u}^\star\in\Ker\DIV$, we can therefore find $\bvec{\tau}\in(\Ker\CURL)^\perp$ such that $\CURL\bvec{\tau}=-\bvec{u}^\star$ and $\norm[L^2(\Omega)^3]{\bvec{\tau}}\le C\norm[L^2(\Omega)^3]{\CURL\bvec{\tau}}=C\norm[L^2(\Omega)^3]{\bvec{u}^\star}$, which implies
\begin{equation}\label{eq:stab.cont.4}
\norm[\Hcurl{\Omega}]{\bvec{\tau}}\le C \norm[L^2(\Omega)^3]{\bvec{u}^\star}.
\end{equation}
Plugging this $\bvec{\tau}$ with $\bvec{v}=\bvec{0}$ into the supremum defining $\mathcal S$ and recalling the definition of $A$, we therefore obtain
\[
C\norm[L^2(\Omega)^3]{\bvec{u}^\star}\mathcal S \ge  \norm[\Hcurl{\Omega}]{\bvec{\tau}} \mathcal S
\ge \int_\Omega \bvec{\sigma}\cdot\bvec{\tau}+\int_\Omega (\bvec{u}^\star+\bvec{u}^\perp)\cdot\bvec{u}^\star.
\]
Cauchy--Schwarz and Young inequalities together with \eqref{eq:stab.cont.2}, \eqref{eq:stab.cont.3}, and \eqref{eq:stab.cont.4} lead to
\[
\mathcal S \norm[\Hcurl{\Omega}\times\Hdiv{\Omega}]{(\bvec{\sigma},\bvec{u})}\ge C\norm[L^2(\Omega)^3]{\bvec{u}^\star}
\]
which, combined with \eqref{eq:stab.cont.2} and \eqref{eq:stab.cont.3}, concludes the proof of \eqref{eq:inf.sup.A}.
\end{proof}

This proof shows how crucial working within a de Rham sequence characterised by the exactness relations \eqref{eq:exact.div.L2} and \eqref{eq:exact.curl.div} is to establish the well-posedness of \eqref{eq:weak}. Obtaining a well-posed numerical scheme for this problem requires to reproduce  this framework at the discrete level; the companion paper \cite{Di-Pietro.Droniou:20*1} shows how to carry out this reproduction, based on the local spaces and operators constructed below.
Additionally, $L^2$-like inner products on the discrete version of $\Hcurl{\Omega}$ and $\Hdiv{\Omega}$ must be defined, from the scheme's DOFs, to obtain the discrete version of the bilinear form $A$; to achieve high accuracy of this approximation, these inner products should fulfil polynomial consistency properties -- that is, the discrete inner products applied to DOFs corresponding to certain polynomials of a suitable degree must return the same values as the continuous $L^2$-inner products applied to these polynomials. Polynomial accuracy can be achieved by taking the continuous $L^2$-inner products of potentials, reconstructed in a polynomially consistent way from the DOFs.


\section{Basic tools and notation}\label{sec:basic.tools}

\subsection{Polyhedra and polygons}

A polytope of $\Real^d$, $d\ge 1$, is a connected set that is the interior of a finite union of simplices.
Our focus will be here on polytopes in dimension $d=2$ (polygons) and $d=3$ (polyhedra).
We assume that these polytopes are simply connected and have connected boundaries that are Lipschitz-continuous (that is, each polytope can locally be represented as epigraphs of Lipschitz-continuous functions). In practical applications, the polytopes are elements from the computational mesh, resulting either from mesh generation or from mesh coarsening/agglomeration \cite{Bassi.Botti.ea:12}. The simple connectedness and Lipschitz boundary assumptions therefore do not entail any unreasonable practical restriction on their geometry.

Given a polyhedron $T\subset\Real^3$, we denote by $\FT$ the set of planar polygonal faces that lie on the boundary of $T$.
For all $F\in\FT$, an orientation is set by prescribing a unit normal vector $\normal_F$, and we denote by $\omega_{TF}\in\{-1,1\}$ the orientation of $F$ relative to $T$, that is, $\omega_{TF}=1$ if $\normal_F$ points out of $T$, $-1$ otherwise.
With this choice, $\omega_{TF}\normal_F$ is the unit vector normal to $F$ that points out of $T$.
Similarly, for a polygon $F$, we denote by $\EF$ the set of edges that lie on the boundary $\partial F$ of $F$.
Notice that, throughout the paper, polygons are tacitly regarded as immersed in $\Real^3$ whenever needed.
For all $E\in\EF$, an orientation is set by prescribing the unit tangent vector $\tangent_E$.
The boundary of $F$ is oriented counter-clockwise with respect to $\normal_F$, and we denote by $\omega_{FE}\in\{-1,1\}$ the orientation of $\tangent_E$ opposite to $\partial F$: $\omega_{FE}=1$ if $\tangent_E$ points on $E$ in the opposite orientation to $\partial F$, $\omega_{FE}=-1$ otherwise.
The vertices $V_1$, $V_2$ of the edge $E$ have coordinates $\bvec{x}_{E,1}$, $\bvec{x}_{E,2}$ and are numbered so that $|E|\tangent_E=\bvec{x}_{E,2}-\bvec{x}_{E,1}$, where $|E|$ denotes the length of $E$.
For any polygon $F$ and any edge $E\in\EF$, we also denote by $\normal_{FE}$ the unit normal vector to $E$ lying in the plane of $F$ such that $(\tangent_E,\normal_{FE})$ form a system of right-handed coordinates in the plane of $F$, which means that the system of coordinates $(\tangent_E,\normal_{FE},\normal_F)$ is right-handed.
It can be checked that $\omega_{FE}\normal_{FE}$ is the normal to $E$, in the plane where $F$ lies, pointing out of $F$, and that, if $F_1,F_2$ are two faces of $T$ that share an edge $E$, it holds
\begin{equation}\label{eq:orientation}
  \omega_{TF_1}\omega_{F_1E}+\omega_{TF_2}\omega_{F_2E}=0.
\end{equation}
In what follows, we will also need the sets of edges and vertices of a polyhedron $T\subset\Real^3$, which we denote by $\ET$ and $\VT$, respectively, as well as the set $\partial^2 T\coloneq\bigcup_{E\in\ET}\overline{E}$.
The set of vertices of a polygon $F$ will be denoted by $\VF$.
The vector of coordinates of a generic vertex $V$ will be denoted by $\bvec{x}_V$.

\subsection{Polynomial spaces and vector operators}\label{sec:notation:polynomial.spaces.projectors}

For given integers $\ell\ge 0$ and $n\ge 0$, we denote by $\mathbb{P}_n^\ell$ the space of $n$-variate polynomials of total degree $\le\ell$, with the convention that $\mathbb{P}_0^\ell=\Real$ for any $\ell$ and $\mathbb{P}_n^{-1}=\{0\}$ for any $n$.
For $X$ polyhedron, polygon (immersed in $\Real^3$), or segment (again immersed in $\Real^3$), we denote by $\Poly{\ell}(X)$ the space spanned by the restriction to $X$ of functions in $\mathbb{P}_3^\ell$.
Denoting by $0\le n\le 3$ the dimension of $X$, $\Poly{\ell}(X)$ is isomorphic to $\mathbb{P}_n^\ell$ (the proof, quite simple, follows the ideas of \cite[Proposition 1.23]{Di-Pietro.Droniou:20}).
With a little abuse of notation, we denote both spaces with $\Poly{\ell}(X)$, and the exact meaning of this symbol should be inferred from the context.
We will also need the subspace $\Poly{0,\ell}(X)\coloneq\left\{q\in\Poly{\ell}(X)\st\int_X q=0\right\}$.
For any $X$ polygon or polyhedron, the $L^2$-orthogonal projector $\lproj{\ell}{X}:L^1(X)\to\Poly{\ell}(X)$ is such that, for any $q\in L^1(X)$,
\begin{equation}\label{eq:lproj}
  \int_X(\lproj{\ell}{X}q-q)r=0\qquad\forall r\in\Poly{\ell}(X).
\end{equation}
As a projector, $\lproj{\ell}{X}$ is polynomially consistent, that is, it maps any $r\in\Poly{\ell}(X)$ onto itself.
Optimal approximation properties for this projector have been proved in \cite{Di-Pietro.Droniou:17}; see also \cite{Di-Pietro.Droniou:17*1} for more general results on projectors on local polynomial spaces.
Denoting by $n$ the dimension of $X$, we also denote by $\vlproj{\ell}{X}:L^1(X)^n\to\Poly{\ell}(X)^n$ the vector version defined applying the projector component-wise.

Let $T$ be a polyhedron in $\Real^3$. We denote by $\Poly{\ell}(\FT)$ the space of functions $q:\partial T\to\Real$ such that $q_{|F}\in\Poly{\ell}(F)$ for all $F\in\FT$; an element $q\in\Poly{\ell}(\FT)$ is identified with the family $(q_{|F})_{F\in\FT}$ of its restrictions to the faces. The space $\Poly{\ell}(\ET)$ is defined similarly, replacing faces by edges of $T$. If $F$ is a polygon immersed in $\Real^3$, we define in the same way the space $\Poly{\ell}(\EF)$.
We also define the spaces $\Poly[c]{\ell}(\partial^2 T)\coloneq\Poly{\ell}(\ET)\cap C^0(\partial^2 T)$ and $\Poly[c]{\ell}(\partial F)\coloneq\Poly{\ell}(\EF)\cap C^0(\partial F)$ of functions that are continuous on the corresponding boundary and polynomial of degree $\le\ell$ on each edge of this boundary. It is easily checked that the following mapping is an isomorphism: 
\begin{equation}\label{eq:iso.Pc}
  \Poly[c]{\ell}(\partial F)\ni q\mapsto \big(
  (\lproj{\ell-1}{E}q)_{E\in\EF},(q(\bvec{x}_V))_{V\in\VF}
  \big)\in\left(
  \bigtimes_{E\in\EF}\Poly{\ell-1}(E)
  \right)\times\Real^{\VF}.
\end{equation}
A similar isomorphism can be constructed for $\Poly[c]{\ell}(\partial^2 T)$.

We respectively denote by $\GRAD_F$ and $\DIV_F$ the tangent gradient and divergence operators acting on smooth enough functions defined on $F\in\FT$.
Moreover, for any $r:F\to\Real$ smooth enough, we define the two-dimensional vector curl operator such that
\begin{equation}\label{eq:def:VROTF}
  \VROT_F r\coloneq \rotation{-\nicefrac\pi2}(\GRAD_F r),
\end{equation}
where $\rotation{-\nicefrac\pi2}$ is the rotation, in the oriented tangent space to $F$, of angle $-\frac\pi2$.
We will also need the two-dimensional scalar curl operator such that, for any $\bvec{v}:F\to\Real^2$ smooth enough,
\begin{equation}\label{eq:def:ROTF}
  \ROT_F\bvec{v}\coloneq\DIV_F (\rotation{-\nicefrac\pi2} \bvec{v}).
\end{equation}
We note, for future use, the following formulas linking volume and surface operators, which can be established selecting an orthonormal basis of $\Real^3$ in which $\normal_F$ is the third vector:
For any polyhedron $T\subset \Real^3$, any face $F\in\FT$ and any sufficiently smooth functions $\bvec{v}:T\to\Real^3$ and $r:T\to\Real$,
\begin{align}
\label{eq:gradT.rotF}
(\GRAD r)_{|F}\times\normal_F &= \VROT_F(r_{|F}),\\
\label{eq:curlT.divF.rotF}
(\CURL \bvec{v})_{|F}\cdot\normal_F &= \DIV_F(\bvec{v}_{|F}\times\normal_F)=
\ROT_F(\normal_F\times(\bvec{v}_{|F}\times\normal_F)).
\end{align}
Above, $\normal_F\times(\bvec{v}_{|F}\times\normal_F)$ is the orthogonal projection of $\bvec{v}_{|F}$ on the plane that contains $F$.
Notice that, here and in what follows, with a little abuse of notation, this and similar quantities are regarded as functions $F\to\Real^2$ whenever necessary.

For any integer $\ell\ge -1$, we define the following relevant subspaces of $\Poly{\ell}(F)^2$:
\[
\begin{alignedat}{3}
  \Goly{\ell}(F)&\coloneq\GRAD_F\Poly{\ell+1}(F),
  &\qquad&
  \Goly{\ell}(F)^\perp&\coloneq\text{$L^2$-orthogonal complement of $\Goly{\ell}(F)$ in $\Poly{\ell}(F)^2$},
  \\
  \Roly{\ell}(F)&\coloneq\VROT_F\Poly{\ell+1}(F),
  &\qquad&
  \Roly{\ell}(F)^\perp&\coloneq\text{$L^2$-orthogonal complement of $\Roly{\ell}(F)$ in $\Poly{\ell}(F)^2$}.
\end{alignedat}
\]
The corresponding $L^2$-orthogonal projectors are, with obvious notation, $\Gproj{\ell}{F}$, $\GOproj{\ell}{F}$, $\Rproj{\ell}{F}$, and $\ROproj{\ell}{F}$.

Similarly, given a polyhedron $T\subset\Real^3$, for any integer $\ell\ge -1$ we introduce the following subspaces of $\Poly{\ell}(T)^3$:
\[
\begin{alignedat}{3}
  \Goly{\ell}(T)&\coloneq\GRAD\Poly{\ell+1}(T),
  &\qquad&
  \Goly{\ell}(T)^\perp&\coloneq\text{$L^2$-orthogonal complement of $\Goly{\ell}(T)$ in $\Poly{\ell}(T)^3$},
  \\
  \Roly{\ell}(T)&\coloneq\CURL\Poly{\ell+1}(T),
  &\qquad&
  \Roly{\ell}(T)^\perp&\coloneq\text{$L^2$-orthogonal complement of $\Roly{\ell}(T)$ in $\Poly{\ell}(T)^3$},
\end{alignedat}
\]
The corresponding $L^2$-orthogonal projectors are $\Gproj{\ell}{T}$, $\GOproj{\ell}{T}$, $\Rproj{\ell}{T}$, and $\ROproj{\ell}{T}$.

For any polygon $F$, polyhedron $T$, and polynomial degree $\ell\ge 0$, the following mappings are isomorphisms:
\begin{alignat}{2}
  \VROT_F&:\Poly{0,\ell}(F)\xrightarrow{\cong}\Roly{\ell-1}(F)\,,\qquad &\GRAD&:\Poly{0,\ell}(T)\xrightarrow{\cong}\Goly{\ell-1}(T),
  \label{eq:iso:VROTF.GRAD}\\
  \DIV_F &:\Roly{\ell}(F)^\perp\xrightarrow{\cong} \Poly{\ell-1}(F)\,,\qquad&\DIV&:\Roly{\ell}(T)^\perp\xrightarrow{\cong}\Poly{\ell-1}(T),
  \label{eq:iso:DIV}\\
  &&\CURL &:\Goly{\ell}(T)^\perp\xrightarrow{\cong}\Roly{\ell-1}(T).
  \label{eq:iso:CURL}
\end{alignat}
The isomorphisms in \eqref{eq:iso:VROTF.GRAD} are trivial using \eqref{eq:def:VROTF} and the definitions of the respective co-domains. The other isomorphisms follow from \cite[Corollary 7.3]{Arnold:18}, except in the following situations that can easily be verified by hand: $\ell=0$ in \eqref{eq:iso:DIV}, and $\ell=0$ or $1$ in \eqref{eq:iso:CURL}.
In either case, the connectedness of $F$ or $T$ is crucial for these maps to be isomorphisms.

\subsection{Integration by parts formulas}

We recall a few inspiring integration by parts formulas, starting with those relevant for the design of the discrete gradient and divergence operators.
Given a polyhedron $T\in\Real^3$ and two functions $\bvec{v}_T:T\to\Real^3$ and $q_T:T\to\Real$ smooth enough, we have
\begin{equation}\label{eq:ipp:div.grad.T}
  \int_T\GRAD q_T\cdot\bvec{v}_T
  = -\int_T q_T\DIV\bvec{v}_T + \sum_{F\in\FT}\omega_{TF}\int_F q_T(\bvec{v}_T\cdot\normal_F).
\end{equation}
Similarly, for any polygon $F$ and functions $\bvec{v}_F:F\to\Real^2$ and $q_F:F\to\Real$ smooth enough, we have
\begin{equation}\label{eq:ipp:div.grad.F}
  \int_F\GRAD_F q_F\cdot\bvec{v}_F
  = -\int_F q_F\DIV_F\bvec{v}_F + \sum_{E\in\EF}\omega_{FE}\int_E q_F(\bvec{v}_F\cdot\normal_{FE}),
\end{equation}
while, for any edge $E$ and functions $v_E:E\to\Real$ and $q_E:E\to\Real$ smooth enough,
\begin{equation}\label{eq:ipp:div.grad.E}
  \int_E q_E' v_E
  = -\int_E q_E v_E' + (q_E v_E)(\bvec{x}_{E,2}) - (q_E v_E)(\bvec{x}_{E,1}),
\end{equation}
where the derivatives are taken along the direction $\tangent_E$.

Let us now move to the formulas used in the design of the discrete curl operators.
Given a polyhedron $T\in\Real^3$ and two smooth enough functions $\bvec{v}_T:T\to\Real^3$ and $\bvec{w}_T:T\to\Real^3$, we have that
\begin{equation}\label{eq:ipp:curl.T}
  \begin{aligned}
    \int_T\CURL\bvec{v}_T\cdot\bvec{w}_T
    &= \int_T\bvec{v}_T\cdot\CURL\bvec{w}_T
    + \sum_{F\in\FT}\omega_{TF}\int_F\bvec{v}_T\cdot(\bvec{w}_T\times\normal_F)
    \\
    &= \int_T\bvec{v}_T\cdot\CURL\bvec{w}_T
    + \sum_{F\in\FT}\omega_{TF}\int_F(\normal_F\times(\bvec{v}_T\times\normal_F))\cdot(\bvec{w}_T\times\normal_F),
  \end{aligned}
\end{equation}
the second equality being justified recalling that $\normal_F\times(\bvec{v}_T\times\normal_F)$ is the projection of $\bvec{v}_T$ on the plane spanned by $F$, and noting that $\bvec{w}_T\times\normal_F$ belongs to that plane.
Similarly, for any polygon $F$ and smooth enough functions $\bvec{v}_F:F\to\Real^2$ and $r_F:F\to\Real$,
\begin{equation}\label{eq:ipp:rot.F}
  \int_F\ROT_F\bvec{v}_F~r_F
  = \int_F\bvec{v}_F\cdot\VROT_F r_F
  - \sum_{E\in\EF}\omega_{FE}\int_E(\bvec{v}_F\cdot\tangent_E) r_F.
\end{equation}


\section{An exact two-dimensional sequence}\label{sec:2d.sequence}

In this section we define a discrete counterpart of the following exact two-dimensional sequence on a polygon $F$ (which may be thought of as a mesh face):
\begin{equation}\label{eq:continuous.sequence:2D}
  \begin{tikzcd}
    \Real\arrow{r}{i_F} & H^1(F)\arrow{r}{\GRAD_F} & \Hrot{F}\arrow{r}{\ROT_F} & L^2(F)\arrow{r}{0} & \{0\},
  \end{tikzcd}
\end{equation}
where $i_F$ is the operator that maps a real value to a constant function over $F$ and, with usual notation, $H^1(F)$ denotes the space of functions that are square integrable along with their (tangential) derivatives on $F$, while $\Hrot{F}\coloneq\left\{\bvec{v}\in L^2(F)^2\st\ROT_F\bvec{v}\in L^2(F)\right\}$.
The starting point is, in Section \ref{sec:2d.sequence:consistent.operators}, the design of reconstructions of the two-dimensional gradient and curl operators in full polynomial spaces, which drive the choice of the discrete unknowns. These operators cannot be directly used to form an exact discrete sequence, as we show in Section \ref{sec:2d.sequence:almost.exact}.
Their properties, however, point out to the modifications required to obtain exactness, as detailed in Section \ref{sec:2d.sequence:exact}.
Specifically, it is required to trim the discrete counterpart of the space $\Hrot{F}$ and, correspondingly, replace the full gradient with its projection on this trimmed space.
  The resulting spaces will play the role of the restrictions to polyhedral faces of the three-dimensional spaces of Section \ref{sec:3d.sequence:spaces}; see Remarks \ref{rem:uHgrad.2D.3D} and \ref{rem:uHcurl.2D.3D} below.
Two-dimensional scalar and vector potentials are discussed in Section \ref{sec:2d:potentials}, while the discrete counterparts of $L^2$-products along with their stability and consistency properties make the object of Section \ref{sec:2d.sequence:L2.products}.
From this point on, we fix a polynomial degree $k\ge 0$.

\subsection{Two-dimensional full vector operators reconstructions}\label{sec:2d.sequence:consistent.operators}

We start by defining reconstructions of the vector operators in full polynomial spaces.
As a general convention of notation, we use underlines to denote vectors made of components in different polynomial spaces, and bold fonts for vector-valued polynomials or vectors that have at least one vector-valued polynomial component. %
We use the same convention for operators that map DOFs on vectors of polynomials.
Thus, $\uvec{a}=(\bvec{a},\bvec{b},c)$ denotes the vector whose components are the vector-valued polynomials (or operators with values in vector-valued polynomials) $\bvec{a}$ and $\bvec{b}$ together with the scalar-valued polynomial (or operator with values in scalar-valued polynomials) $c$, while $\underline{a}=(a,b,c)$ is the vector whose components are the scalar-valued polynomials/operators $a$, $b$, and $c$.
We note here the usage of the same letter for both the vector $\uvec{a}$ and one of its components $\bvec{a}$, which will be natural in our definitions of operators.
Also, the full vector operators that will only enter in the sequence through $L^2$-projections or restrictions of their domain will be denoted using a bullet, e.g.~$\accentset{\bullet}{\uvec{a}}$ or $\accentset{\bullet}{\bvec{a}}$, to facilitate their identification.%

\subsubsection{Gradient}

From the polytopal methods literature, it is well known that a consistent gradient can be reconstructed in $\Poly{k}(F)^2$ using polynomials of degree $(k-1)$ inside $F$ and boundary polynomials in $\Poly{k}(\EF)$ (see, e.g., \cite[Section 2.1]{Di-Pietro.Droniou:20}). However, in order for the discrete gradient operator to map on the domain of the discrete curl operator, we also aim here at reconstructing a gradient of degree $k$ on $\partial F$.
For this reason, we will rather consider boundary polynomials in $\Poly[c]{k+1}(\partial F)$. We therefore define
\[
\uGFh\coloneq (\cGF,\cGpF):\Poly{k-1}(F)\times \Poly[c]{k+1}(\partial F)\to \Poly{k}(F)^2\times \Poly{k}(\EF)
\]
such that, for all $\underline{q}_F=(q_F,q_{\partial F})\in \Poly{k-1}(F)\times \Poly[c]{k+1}(\partial F)$,
\begin{equation}\label{eq:GF.k}
  \int_F\cGF\underline{q}_F\cdot\bvec{w}_F
  = -\int_F q_F\DIV_F\bvec{w}_F
  + \sum_{E\in\EF}\omega_{FE}\int_E q_{\partial F}(\bvec{w}_F\cdot\normal_{FE})
  \qquad\forall\bvec{w}_F\in\Poly{k}(F)^2
\end{equation}
and
\begin{equation}\label{eq:GE.k}
  (\cGpF\underline{q}_F)_{|E} = \cGE\underline{q}_F\coloneq (q_{\partial F})_{|E}'\qquad\forall E\in\EF,
\end{equation}
where the derivative on $E$ is taken along the direction $\tangent_E$. Since $\cGpF$ only depends on the boundary values of $\underline{q}_F$, by a slight abuse of notation we also write $\cGpF q_{\partial F}$ instead of $\cGpF \underline{q}_F$ when needed.

We next state two results that will be useful in what follows: the consistency of both components of $\uGFh$, and the surjectivity of $\cGpF$. Let us first introduce the interpolator $\uIgrad[F]: C^0(\overline{F})\to \Poly{k-1}(F)\times \Poly[c]{k+1}(\partial F)$ such that, for all $q\in C^0(\overline{F})$,
\begin{equation}\label{eq:uIgradF}
  \begin{gathered}
    \uIgrad[F] q \coloneq (q_F,q_{\partial F})\in\Poly{k-1}(F)\times \Poly[c]{k+1}(\partial F)\text{ with $q_F=\lproj{k-1}{F} q$,}
    \\
    \text{
      $\lproj{k-1}{E}(q_{\partial F})_{|E}=\lproj{k-1}{E} q_{|E}$ for all $E\in\EF$,
      and $q_{\partial F}(\bvec{x}_V)=q(\bvec{x}_V)$ for all $V\in\VF$.
    }
  \end{gathered}
\end{equation}
The isomorphism \eqref{eq:iso.Pc} shows that the last two relations define $q_{\partial F}$ uniquely.

\begin{proposition}[Polynomial consistency of the reconstructed gradient]
  It holds
  \begin{alignat}{2}
    \label{eq:GF.k.cons}
    \cGF(\uIgrad[F] q) &= \GRAD_F q &\qquad& \forall q\in\Poly{k+1}(F),\\
    \label{eq:GE.k.cons}
    \cGE(\uIgrad[F] q) &= (q_{|E})' &\qquad&\forall q\in\Poly{k+1}(F)\,,\quad\forall E\in\EF.
  \end{alignat}
\end{proposition}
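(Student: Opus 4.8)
The plan is to prove each identity by showing that the defining relations of the reconstructed operators, when applied to the interpolate $\uIgrad[F] q$ of a polynomial $q\in\Poly{k+1}(F)$, reduce exactly to the integration-by-parts formula satisfied by the genuine tangential gradient $\GRAD_F q$. Since both $\cGF(\uIgrad[F] q)$ and $\GRAD_F q$ live in $\Poly{k}(F)^2$, and $\cGF(\uIgrad[F] q)$ is characterised by testing against all $\bvec{w}_F\in\Poly{k}(F)^2$ in \eqref{eq:GF.k}, it suffices to check that $\GRAD_F q$ satisfies the same relation. The key observation is that for $q\in\Poly{k+1}(F)$ the interpolate has $q_F=\lproj{k-1}{F}q$ and a boundary datum $q_{\partial F}\in\Poly[c]{k+1}(\partial F)$ whose edge moments agree with those of $q$.

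First I would treat the easier edge identity \eqref{eq:GE.k.cons}. By definition \eqref{eq:GE.k}, $\cGE(\uIgrad[F] q)=(q_{\partial F})_{|E}'$, so the claim is that $(q_{\partial F})_{|E}'=(q_{|E})'$ on each edge $E$. Both $(q_{\partial F})_{|E}$ and $q_{|E}$ lie in $\Poly{k+1}(E)$, and the defining relations \eqref{eq:uIgradF} of the interpolator force $\lproj{k-1}{E}(q_{\partial F})_{|E}=\lproj{k-1}{E}q_{|E}$ together with agreement at the two vertices of $E$. These conditions pin down an element of $\Poly{k+1}(E)$ uniquely (this is exactly the content of the isomorphism \eqref{eq:iso.Pc} read edge by edge), so $(q_{\partial F})_{|E}=q_{|E}$ on each edge, and the derivatives coincide. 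Thus \eqref{eq:GE.k.cons} follows immediately.

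Next I would handle the gradient identity \eqref{eq:GF.k.cons}. I plug $\uIgrad[F] q=(\lproj{k-1}{F}q,q_{\partial F})$ into the right-hand side of \eqref{eq:GF.k} and aim to show it equals $\int_F\GRAD_F q\cdot\bvec{w}_F$ for every $\bvec{w}_F\in\Poly{k}(F)^2$. In the volume term, since $\DIV_F\bvec{w}_F\in\Poly{k-1}(F)$, the projector can be removed: $\int_F(\lproj{k-1}{F}q)\DIV_F\bvec{w}_F=\int_F q\,\DIV_F\bvec{w}_F$ by the defining property \eqref{eq:lproj}. In the boundary term, I use that the edge moments of $q_{\partial F}$ against $\bvec{w}_F\cdot\normal_{FE}$ coincide with those of $q_{|E}$: by \eqref{eq:GE.k.cons} just proved we in fact have $q_{\partial F}=q$ on $\partial F$, so $\int_E q_{\partial F}(\bvec{w}_F\cdot\normal_{FE})=\int_E q\,(\bvec{w}_F\cdot\normal_{FE})$. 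The right-hand side of \eqref{eq:GF.k} thus becomes precisely the right-hand side of the continuous integration-by-parts formula \eqref{eq:ipp:div.grad.F} applied to $q_F=q$ and $\bvec{v}_F=\bvec{w}_F$, which equals $\int_F\GRAD_F q\cdot\bvec{w}_F$. Since this holds for all test functions $\bvec{w}_F\in\Poly{k}(F)^2$ and $\GRAD_F q\in\Poly{k}(F)^2$, the uniqueness in the definition \eqref{eq:GF.k} yields $\cGF(\uIgrad[F] q)=\GRAD_F q$.

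The only genuine point requiring care—and the step I expect to be the main obstacle—is justifying that the interpolated boundary datum $q_{\partial F}$ coincides with the trace of $q$ on $\partial F$ when $q\in\Poly{k+1}(F)$. This is where one must verify that the global continuity of $q_{|\partial F}$, its edgewise membership in $\Poly{k+1}(E)$, its vertex values, and its low-order edge moments together determine it uniquely via \eqref{eq:iso.Pc}; the trace $q_{|\partial F}$ satisfies all the interpolation conditions, so uniqueness forces $q_{\partial F}=q_{|\partial F}$. Once this identification is in hand, both parts of the proposition reduce to the classical integration-by-parts identities, and no further estimate is needed.
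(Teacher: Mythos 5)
Your proof is correct and follows essentially the same route as the paper's: identify $q_{\partial F}$ with $q_{|\partial F}$ via the uniqueness of the interpolation conditions (you argue this edge by edge, the paper invokes the global isomorphism \eqref{eq:iso.Pc}, but the content is the same), deduce \eqref{eq:GE.k.cons}, then remove the projector $\lproj{k-1}{F}$ against $\DIV_F\bvec{w}_F\in\Poly{k-1}(F)$ and conclude \eqref{eq:GF.k.cons} from the integration by parts formula \eqref{eq:ipp:div.grad.F} and the fact that both sides lie in $\Poly{k}(F)^2$. No gaps.
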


\begin{proof}
  Let $q\in \Poly{k+1}(F)$ and set $\underline{q}_F\coloneq\uIgrad[F]q$. The restriction $q_{|\partial F}$ of $q$ to $\partial F$ obviously satisfies the conditions imposed on $q_{\partial F}$ in \eqref{eq:uIgradF}, and thus $q_{\partial F}=q_{|\partial F}$. This establishes \eqref{eq:GE.k.cons}. The definition \eqref{eq:GF.k} of $\cGF$ then yields, for all $\bvec{w}_F\in\Poly{k}(F)^2$,
  \begin{equation}\label{eq:GF.k.cons.test}
    \begin{aligned}
      \int_F \cGF(\uIgrad[F] q)\cdot\bvec{w}_F
      &= -\int_F (\lproj{k-1}{F} q)\DIV_F\bvec{w}_F
      + \sum_{E\in\EF}\omega_{FE}\int_E q_{\partial F}(\bvec{w}_F\cdot\normal_{FE})
      \\
      &= -\int_F q\DIV_F\bvec{w}_F
      + \sum_{E\in\EF} \omega_{FE}\int_E q_{|\partial F}(\bvec{w}_F\cdot\normal_{FE})=\int_F \GRAD_F q \cdot\bvec{w}_F,
    \end{aligned}
  \end{equation}
  where the removal of $\lproj{k-1}{F}$ in the second line is justified by its definition \eqref{eq:lproj} along with the fact that $\DIV_F\bvec{w}_F\in\Poly{k-1}(F)$, and the conclusion follows from the integration by parts formula \eqref{eq:ipp:div.grad.F}.
  Since both $\cGF(\uIgrad[F] q)$ and $\GRAD_F q$ belong to $\Poly{k}(F)^2$, \eqref{eq:GF.k.cons.test} proves \eqref{eq:GF.k.cons}.
\end{proof}

\begin{proposition}[Surjectivity of $\cGpF$]\label{prop:surj.GbdryF}
  For all $r_{\partial F}\in \Poly{k}(\EF)$ such that $\sum_{E\in\EF}\omega_{FE}\int_E r_{\partial F}=0$, there
  exists $q_{\partial F}\in\Poly[c]{k+1}(\partial F)$ such that $\cGpF q_{\partial F}=r_{\partial F}$.
\end{proposition}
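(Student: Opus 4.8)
The plan is to construct $q_{\partial F}$ edge by edge, antidifferentiating $r_{\partial F}$ on each edge and then choosing the integration constants so as to enforce continuity at the vertices. First, for each edge $E\in\EF$, since $(r_{\partial F})_{|E}\in\Poly{k}(E)$ its antiderivative along $\tangent_E$ is a polynomial of degree $\le k+1$; I denote by $Q_E\in\Poly{k+1}(E)$ the unique such antiderivative normalised by $Q_E(\bvec{x}_{E,1})=0$, so that $Q_E'=(r_{\partial F})_{|E}$ along $\tangent_E$ and $Q_E(\bvec{x}_{E,2})=\int_E r_{\partial F}$.

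I then look for $q_{\partial F}$ whose restriction to each $E$ has the form $(q_{\partial F})_{|E}=Q_E+g(V_{E,1})$, where $g:\VF\to\Real$ is a to-be-determined assignment of values at the vertices and $V_{E,1},V_{E,2}\in\VF$ denote the endpoints of $E$ with coordinates $\bvec{x}_{E,1},\bvec{x}_{E,2}$. Continuity of $q_{\partial F}$ at all vertices is equivalent to the discrete system $g(V_{E,2})-g(V_{E,1})=\int_E r_{\partial F}$ for all $E\in\EF$. The key observation is that $\partial F$ is a single closed cycle through the vertices of $F$, so this system admits a solution if and only if the prescribed increments sum to zero around the cycle. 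Tracking the orientation conventions of Section~\ref{sec:basic.tools}, the increment accumulated along the $\partial F$-orientation on each edge equals $-\omega_{FE}\int_E r_{\partial F}$, whence the cyclic sum is $-\sum_{E\in\EF}\omega_{FE}\int_E r_{\partial F}$; this vanishes precisely by the hypothesis on $r_{\partial F}$. One may therefore fix one vertex value arbitrarily (reflecting that $\Ker\cGpF=\Real$) and propagate consistently around the cycle to obtain an admissible $g$.

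With such a $g$ at hand, the resulting $q_{\partial F}$ is a degree-$(k+1)$ polynomial on each edge, and the common value $g(V)$ is attained at every vertex $V$ from both incident edges, thanks to the normalisation $Q_E(\bvec{x}_{E,1})=0$ and the difference equations; hence $q_{\partial F}\in\Poly[c]{k+1}(\partial F)$. Finally, $(q_{\partial F})_{|E}'=Q_E'=(r_{\partial F})_{|E}$ for every $E$, which is exactly $\cGpF q_{\partial F}=r_{\partial F}$ by the definition \eqref{eq:GE.k}.

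I expect the only delicate point to be the orientation bookkeeping in the telescoping argument: one must correctly relate the direction of $\tangent_E$, along which $Q_E$ is defined, to the orientation of $\partial F$ encoded by $\omega_{FE}$. A sign slip there would yield the wrong compatibility condition; with the conventions of Section~\ref{sec:basic.tools}, the per-edge increment along $\partial F$ is $-\omega_{FE}\int_E r_{\partial F}$, so the cyclic sum closes to zero precisely under the hypothesis $\sum_{E\in\EF}\omega_{FE}\int_E r_{\partial F}=0$.
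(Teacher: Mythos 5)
Your proof is correct and follows essentially the same route as the paper's: antidifferentiate $r_{\partial F}$ along $\partial F$ and use the hypothesis $\sum_{E\in\EF}\omega_{FE}\int_E r_{\partial F}=0$ as exactly the condition for the resulting potential to be single-valued after one full loop around the (simple, connected) boundary cycle. The only difference is packaging — the paper integrates the reoriented function globally along a clockwise path, while you work edge by edge and solve a telescoping difference system at the vertices — and your orientation bookkeeping (per-edge increment $-\omega_{FE}\int_E r_{\partial F}$ along the $\partial F$-orientation) is consistent with the conventions of Section~\ref{sec:basic.tools}.
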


\begin{remark}[Bijectivity of $\cGpF$]
  It is not difficult to check that the condition $\sum_{E\in\EF}\omega_{FE}\int_E r_{\partial F}=0$ is also necessary for $r_{\partial F}$ to be in the image of $\cGpF$, which is therefore an isomorphism between $\Poly[c]{k+1}(\partial F)$ and $\left\{r_{\partial F}\in\Poly{k}(\EF)\st\sum_{E\in\EF}\omega_{FE}\int_Er_{\partial F}=0\right\}$.
\end{remark}

\begin{proof}[Proof of Proposition \ref{prop:surj.GbdryF}]
  Define the function $\tilde{r}_{\partial F}:\partial F\to\Real$ by setting $(\tilde{r}_{\partial F})_{|E}\coloneq\omega_{FE}(r_{\partial F})_{|E}$ for all $E\in\EF$.
  Then,
  \begin{equation}\label{eq:rtilde.zero}
    \int_{\partial F}\tilde{r}_{\partial F}=0.
  \end{equation}
  Fix an arbitrary $V\in\VF$ and, for a given $\bvec{x}\in\partial F$, let $\Gamma_{\bvec{x}_V\to\bvec{x}}$ be the path in $\partial F$ that goes from $\bvec{x}_V$ to $\bvec{x}$ in a clockwise direction.
  The connectedness of $\partial F$ and the Lipschitz boundary property of $F$ (which implies that $\partial F$ is a simple curve) ensure that this path covers all of $\partial F$, and that it does not loop on itself before coming back to $\bvec{x}_V$.
  Define then $q_{\partial F}(\bvec{x})$ as the integral of $\tilde{r}_{\partial F}$ along $\Gamma_{\bvec{x}_V\to\bvec{x}}$. The condition \eqref{eq:rtilde.zero} ensures the continuity of $q_{\partial F}$ at $V$ after a complete loop around $\partial F$. By construction, the derivative of $q_{\partial F}$ in the clockwise direction along $\partial F$ is
  equal to $\tilde{r}_{\partial F}$. This means that, on any $E\in\EF$ with orientation $\tangent_E$, we have $\omega_{FE}(q_{\partial F})_{|E}'=(\tilde{r}_{\partial F})_{|E}=\omega_{FE}(r_{\partial F})_{|E}$,
  which precisely establishes $\cGpF q_{\partial F}=r_{\partial F}$. 
\end{proof}

\subsubsection{Curl}

The full two-dimensional scalar curl reconstruction operator is $\hcCF:\Poly{k}(F)^2\times \Poly{k}(\EF)\to\Poly{k}(F)$ such that, for all $\uvec{v}_F=(\bvec{v}_F,v_{\partial F})\in \Poly{k}(F)^2\times \Poly{k}(\EF)$,
\begin{equation}\label{eq:chCF}
  \int_F\hcCF\uvec{v}_F~r_F = \int_F\bvec{v}_F\cdot\VROT_F r_F - \sum_{E\in\EF}\omega_{FE}\int_E v_{\partial F} r_F\qquad\forall r_F\in\Poly{k}(F).
\end{equation}
Define the interpolator $\uhIrot[F]:H^1(F)^2\to \Poly{k}(F)^2\times \Poly{k}(\EF)$ such that, for all $\bvec{v}\in H^1(F)^2$,
\begin{equation}\label{eq:uIhrot}
  \uhIrot\bvec{v}\coloneq\big(
  \vlproj{k}{F}\bvec{v},
  (\lproj{k}{E}(\bvec{v}\cdot\tangent_E)\big)_{E\in\EF}
  \big).
\end{equation}

\begin{proposition}[Commutation property for $\hcCF$]\label{prop:CFh.commutation}
  The following commutation property holds:
  \begin{equation}\label{eq:CFh.commutation}
    \hcCF(\uhIrot\bvec{v}) = \lproj{k}{F}(\ROT_F\bvec{v})\qquad\forall\bvec{v}\in H^1(F)^2.
  \end{equation}
\end{proposition}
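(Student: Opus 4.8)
The plan is to exploit the fact that $\hcCF(\uhIrot\bvec{v})$ and $\lproj{k}{F}(\ROT_F\bvec{v})$ both lie in $\Poly{k}(F)$, so that proving their equality reduces to checking that their $L^2(F)$-inner products against an arbitrary test function $r_F\in\Poly{k}(F)$ coincide. The whole argument is then a matter of unfolding the definition \eqref{eq:chCF} of $\hcCF$, discarding the projectors carried by the interpolated argument $\uhIrot\bvec{v}$, and recognising the integration-by-parts formula \eqref{eq:ipp:rot.F}.

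Concretely, I would fix $r_F\in\Poly{k}(F)$ and write, using \eqref{eq:chCF} together with \eqref{eq:uIhrot} (so that the volumetric component is $\vlproj{k}{F}\bvec{v}$ and the edge component on $E$ is $\lproj{k}{E}(\bvec{v}\cdot\tangent_E)$),
\begin{equation*}
  \int_F\hcCF(\uhIrot\bvec{v})\,r_F
  = \int_F(\vlproj{k}{F}\bvec{v})\cdot\VROT_F r_F
  - \sum_{E\in\EF}\omega_{FE}\int_E \lproj{k}{E}(\bvec{v}\cdot\tangent_E)\,r_F.
\end{equation*}
The key step is then to remove both projectors. For the volume term, by \eqref{eq:def:VROTF} one has $\VROT_F r_F\in\Poly{k-1}(F)^2\subset\Poly{k}(F)^2$, so the defining property \eqref{eq:lproj} of the (vector) $L^2$-projector gives $\int_F(\vlproj{k}{F}\bvec{v})\cdot\VROT_F r_F=\int_F\bvec{v}\cdot\VROT_F r_F$. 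For each edge term, since $(r_F)_{|E}\in\Poly{k}(E)$ and $\lproj{k}{E}$ is self-adjoint and a projector onto $\Poly{k}(E)$, one has $\int_E\lproj{k}{E}(\bvec{v}\cdot\tangent_E)\,r_F=\int_E(\bvec{v}\cdot\tangent_E)\,\lproj{k}{E}(r_F)=\int_E(\bvec{v}\cdot\tangent_E)\,r_F$. This degree bookkeeping is really the crux of the matter: it works precisely because the interpolator uses degree-$k$ edge projections, exactly matching the degree of the admissible test functions $r_F$; had the edge unknowns been of lower degree, the boundary term could not be cleaned up in this way. After these substitutions the right-hand side becomes exactly the right-hand side of \eqref{eq:ipp:rot.F} applied to $\bvec{v}$ and $r_F$.

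Finally I would invoke \eqref{eq:ipp:rot.F} (valid for $\bvec{v}\in H^1(F)^2$ and the smooth $r_F$, the formula extending from smooth functions to $H^1$ by density) to obtain $\int_F\hcCF(\uhIrot\bvec{v})\,r_F=\int_F\ROT_F\bvec{v}\,r_F$, and then rewrite the right-hand side as $\int_F\lproj{k}{F}(\ROT_F\bvec{v})\,r_F$ using once more \eqref{eq:lproj} and $r_F\in\Poly{k}(F)$. Since this holds for all $r_F\in\Poly{k}(F)$ and both $\hcCF(\uhIrot\bvec{v})$ and $\lproj{k}{F}(\ROT_F\bvec{v})$ belong to $\Poly{k}(F)$, the identity \eqref{eq:CFh.commutation} follows. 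There is no genuine obstacle here beyond keeping careful track of the polynomial degrees so that each projector may legitimately be dropped; the commutation is essentially built into the matching choice of degrees in the definitions of $\hcCF$ and $\uhIrot$.
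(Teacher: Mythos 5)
Your proof is correct and follows essentially the same route as the paper's: unfold \eqref{eq:chCF} applied to $\uhIrot\bvec{v}$, drop the projectors $\vlproj{k}{F}$ and $\lproj{k}{E}$ using the degree matches $\VROT_F r_F\in\Poly{k-1}(F)^2$ and $(r_F)_{|E}\in\Poly{k}(E)$, apply the integration by parts formula \eqref{eq:ipp:rot.F}, and conclude via the definition of $\lproj{k}{F}$. The extra remarks on self-adjointness of the edge projector and density for the $H^1$ extension of \eqref{eq:ipp:rot.F} are fine but not a departure from the paper's argument.
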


\begin{proof}
  Writing \eqref{eq:chCF} for $\uvec{v}_F=\uhIrot\bvec{v}$ we have, for all $r_F\in\Poly{k}(F)$,
  \begin{equation}\label{eq:comm.CF.1}
    \begin{aligned}
      \int_F\hcCF(\uhIrot\bvec{v})~r_F
      &= \int_F\vlproj{k}{F}\bvec{v}\cdot\VROT_F r_F
      - \sum_{E\in\EF}\omega_{FE}\int_E \lproj{k}{E}(\bvec{v}\cdot\tangent_E)~r_F
      \\
      &= \int_F\bvec{v}\cdot\VROT_F r_F
      - \sum_{E\in\EF}\omega_{FE}\int_E (\bvec{v}\cdot\tangent_E)~r_F
      = \int_F\ROT_F\bvec{v}~r_F,	
    \end{aligned}
  \end{equation}
  where, to remove the $L^2$-orthogonal projectors in the second line, we have used their definition \eqref{eq:lproj} after observing that $\VROT_F r_F\in\Poly{k-1}(F)$ and $(r_F)_{|E}\in\Poly{k}(E)$ for all $E\in\EF$,
  and we have invoked the integration by parts formula \eqref{eq:ipp:rot.F} to conclude.
  By definition of $\lproj{k}{F}$, \eqref{eq:comm.CF.1} implies \eqref{eq:CFh.commutation}.
\end{proof}

\begin{remark}[Internal unknown]\label{rem:CFh:face.unknown}
  An inspection of the above proof reveals that the commutation property \eqref{eq:CFh.commutation} holds also if we take $\Roly{k-1}(F)\times\Poly{k}(\EF)$ instead of $\Poly{k}(F)^2\times \Poly{k}(\EF)$ as a domain for the discrete curl operator, and we correspondingly replace $\vlproj{k}{F}$ with $\Rproj{k-1}{F}$ in the definition \eqref{eq:uIhrot} of $\uhIrot$.
\end{remark}

\subsection{An almost-exact two-dimensional sequence}\label{sec:2d.sequence:almost.exact}

The two-dimensional full gradient and curl reconstructions define the following sequence:
\begin{equation}\label{eq:almost.exact}
  \begin{tikzcd}
    \Real\arrow{r}{\uIgrad[F]} & \Poly{k-1}(F)\times \Poly[c]{k+1}(\partial F)\arrow{r}{\uGFh} & \Poly{k}(F)^2\times \Poly{k}(\EF)\arrow{r}{\hcCF} & \Poly{k}(F)\arrow{r}{0} & \{0\}.
  \end{tikzcd}
\end{equation}
This sequence satisfies some exactness properties, but is not completely exact.
Analysing these properties, as done in the following proposition, will guide us to define an exact two-dimensional sequence of spaces and operators.

\begin{proposition}[Properties of the sequence \eqref{eq:almost.exact}]\label{prop:almost.exact}
  It holds:
  \begin{equation}
    \uIgrad[F]\Real = \Ker\uGFh \label{eq:exact.uGFh.ker},
  \end{equation}
  \begin{equation}
    \Image\uGFh\subset\Ker\hcCF \label{eq:exact.CF.ker.1},
  \end{equation}
  \begin{equation}\label{eq:exact.CF.ker.2}
    \begin{gathered}
      \text{For all $\uvec{v}_F=(\bvec{v}_F,v_{\partial F})\in\Ker\hcCF$, there exists $\underline{q}_F=(q_F,q_{\partial F})\in \Poly{k-1}(F)\times \Poly[c]{k+1}(\partial F)$}
      \\
      \text{such that
        $v_{\partial F}=\cGpF q_{\partial F}$,
        $\Rproj{k-1}{F}\bvec{v}_F=\Rproj{k-1}{F}(\cGF\underline{q}_F)$,
        and $\ROproj{k}{F}\bvec{v}_F = \ROproj{k}{F}(\cGF\underline{q}_F)$,
      }
    \end{gathered}
  \end{equation}
  and
  \begin{equation}
    \Image\hcCF = \Poly{k}(F).
    \label{eq:exact.CF.Im}
  \end{equation}
\end{proposition}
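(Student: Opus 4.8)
The plan is to treat the four assertions in the stated order, leaning throughout on the isomorphisms \eqref{eq:iso:VROTF.GRAD}--\eqref{eq:iso:DIV} and on the integration-by-parts structure built into \eqref{eq:GF.k} and \eqref{eq:chCF}. For \eqref{eq:exact.uGFh.ker}, the inclusion $\uIgrad[F]\Real\subset\Ker\uGFh$ is read off directly from the polynomial consistency \eqref{eq:GF.k.cons}--\eqref{eq:GE.k.cons} applied to a constant. For the reverse inclusion I would take $\underline{q}_F=(q_F,q_{\partial F})\in\Ker\uGFh$: the vanishing of $\cGpF\underline{q}_F$ makes $(q_{\partial F})_{|E}'=0$ on each edge, so by continuity of $q_{\partial F}$ on the connected curve $\partial F$ it is a global constant $c$; inserting $\cGF\underline{q}_F=\bvec 0$ into \eqref{eq:GF.k} and using the divergence theorem to absorb the boundary term then gives $\int_F(c-q_F)\DIV_F\bvec{w}_F=0$ for all $\bvec{w}_F\in\Poly{k}(F)^2$, whence $q_F=c$ since $\DIV_F$ is onto $\Poly{k-1}(F)$ by \eqref{eq:iso:DIV}. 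Thus $\underline{q}_F=\uIgrad[F]c$.

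For \eqref{eq:exact.CF.ker.1} I would substitute $\uvec{v}_F=\uGFh\underline{q}_F$ into \eqref{eq:chCF} and test against $r_F\in\Poly{k}(F)$. The choice $\bvec{w}_F=\VROT_F r_F\in\Poly{k-1}(F)^2$ is admissible in \eqref{eq:GF.k}; the volume term drops because $\DIV_F\VROT_F r_F=\ROT_F\GRAD_F r_F=0$, while the identity $\VROT_F r_F\cdot\normal_{FE}=-(r_F)_{|E}'$ (a consequence of \eqref{eq:def:VROTF} and the right-handedness of $(\tangent_E,\normal_{FE})$) turns the edge contributions of $\cGF\underline{q}_F$ into $-\sum_{E\in\EF}\omega_{FE}\int_E q_{\partial F}(r_F)_{|E}'$. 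Adding the edge term $-\sum_{E\in\EF}\omega_{FE}\int_E(q_{\partial F})_{|E}'r_F$ produced by $\cGpF\underline{q}_F$ reconstitutes $-\sum_{E\in\EF}\omega_{FE}\int_E(q_{\partial F}r_F)_{|E}'$, which vanishes because $q_{\partial F}r_F$ is single-valued and continuous on the closed curve $\partial F$ (edgewise fundamental theorem of calculus \eqref{eq:ipp:div.grad.E} followed by telescoping).

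The core is \eqref{eq:exact.CF.ker.2}, and here the key is to see which component of $\underline{q}_F$ controls which identity. Given $\uvec{v}_F=(\bvec{v}_F,v_{\partial F})\in\Ker\hcCF$, testing the kernel relation with $r_F\equiv1$ yields $\sum_{E\in\EF}\omega_{FE}\int_E v_{\partial F}=0$, precisely the hypothesis of Proposition \ref{prop:surj.GbdryF}; this furnishes $q_{\partial F}\in\Poly[c]{k+1}(\partial F)$ with $\cGpF q_{\partial F}=v_{\partial F}$, the first required identity. The $\Rproj{k-1}{F}$-identity then holds for \emph{any} $q_F$: since $\Roly{k-1}(F)=\VROT_F\Poly{k}(F)$, one tests both $\bvec{v}_F$ and $\cGF\underline{q}_F$ against $\VROT_F r_F$, and the computation of \eqref{eq:exact.CF.ker.1} shows both reduce to $\sum_{E\in\EF}\omega_{FE}\int_E v_{\partial F}r_F$ (for $\bvec{v}_F$ via the kernel relation, for $\cGF\underline{q}_F$ after edgewise integration by parts and $\cGpF q_{\partial F}=v_{\partial F}$). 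Only the $\ROproj{k}{F}$-identity, namely $\int_F\bvec{v}_F\cdot\bvec{w}_F=\int_F\cGF\underline{q}_F\cdot\bvec{w}_F$ for all $\bvec{w}_F\in\Roly{k}(F)^\perp$, remains to be arranged by a suitable $q_F$. Feeding \eqref{eq:GF.k} into the right-hand side converts this into a prescription for $\int_F q_F\DIV_F\bvec{w}_F$; as $\DIV_F:\Roly{k}(F)^\perp\to\Poly{k-1}(F)$ is an isomorphism \eqref{eq:iso:DIV}, the remaining terms define a genuine linear functional of $\DIV_F\bvec{w}_F\in\Poly{k-1}(F)$, and a Riesz representation in $\Poly{k-1}(F)$ supplies the unique $q_F$ realising it. Together with $q_{\partial F}$ this yields all three identities.

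Lastly, \eqref{eq:exact.CF.Im} follows from a cokernel argument: if $r_F\in\Poly{k}(F)$ satisfies $\int_F(\hcCF\uvec{v}_F)r_F=0$ for all $\uvec{v}_F$, then taking $v_{\partial F}=0$ and $\bvec{v}_F=\VROT_F r_F$ in \eqref{eq:chCF} forces $\VROT_F r_F=\bvec 0$, so $r_F$ is a constant $c$; taking then $\bvec{v}_F=\bvec 0$ and $v_{\partial F}$ equal to $\omega_{FE}\,c$ on each $E\in\EF$ gives $c^2\sum_{E\in\EF}|E|=0$, hence $r_F=0$ and $\hcCF$ is onto. I expect the genuine obstacle of the proposition to be the bookkeeping in \eqref{eq:exact.CF.ker.2}: verifying the compatibility condition for Proposition \ref{prop:surj.GbdryF}, and recognising that membership in $\Ker\hcCF$ already forces the $\Rproj{k-1}{F}$-identity (through $\DIV_F\VROT_F=0$), leaving the single free unknown $q_F$ to be spent on the $\ROproj{k}{F}$-identity via the divergence isomorphism.
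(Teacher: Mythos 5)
Your proof is correct. For the first three identities \eqref{eq:exact.uGFh.ker}--\eqref{eq:exact.CF.ker.2} it follows essentially the paper's own route: constancy of $q_{\partial F}$ from the vanishing edge derivatives and recovery of $q_F$ through the surjectivity of $\DIV_F$; the telescoping of vertex values after edgewise integration by parts for \eqref{eq:exact.CF.ker.1}; and, for \eqref{eq:exact.CF.ker.2}, the compatibility condition from $r_F=1$ feeding Proposition \ref{prop:surj.GbdryF}, the observation that the $\Rproj{k-1}{F}$-identity holds for any $q_F$, and the determination of $q_F$ via the isomorphism \eqref{eq:iso:DIV}. (One negligible imprecision: in the kernel computation the correct conclusion is $q_F=\lproj{k-1}{F}c$ rather than literally $q_F=c$, which matters only for $k=0$ where $\Poly{k-1}(F)=\{0\}$; the identification $\underline{q}_F=\uIgrad[F]c$ is unaffected.)

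You genuinely diverge only on \eqref{eq:exact.CF.Im}. The paper argues constructively: given $q_F\in\Poly{k}(F)$, it picks $\bvec{v}\in\Poly{k+1}(F)^2$ with $\ROT_F\bvec{v}=q_F$ and invokes the commutation property \eqref{eq:CFh.commutation} to get $q_F=\hcCF(\uhIrot\bvec{v})$. You instead kill the cokernel: any $r_F$ that is $L^2$-orthogonal to $\Image\hcCF$ satisfies $\VROT_F r_F=\bvec{0}$ (test with $(\VROT_F r_F,0)$), hence is a constant $c$ on the connected $F$, and the boundary test $(v_{\partial F})_{|E}=\omega_{FE}c$ then forces $-c^2\sum_{E\in\EF}|E|=0$. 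Both arguments are sound; the paper's version reuses machinery it needs anyway (the interpolator $\uhIrot$ and the commutation property, which reappear in the exactness proof for the trimmed sequence), while yours is slightly more self-contained, avoiding the interpolator and the surjectivity of $\ROT_F:\Poly{k+1}(F)^2\to\Poly{k}(F)$ at the price of a finite-dimensional duality step.
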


\begin{proof}
  \underline{1. \emph{Proof of \eqref{eq:exact.uGFh.ker}.}}
  If $C\in\Real$ and $\underline{q}_F=\uIgrad[F]C$, then the consistency properties \eqref{eq:GF.k.cons} and \eqref{eq:GE.k.cons} give, respectively, $\cGF\underline{q}_F= 0$ and $\cGpF\underline{q}_F=0$.
  These two relations establish the inclusion $\subset$ in \eqref{eq:exact.uGFh.ker}.

  To prove the converse inclusion, we first notice that, if $\underline{q}_F=(q_F,q_{\partial F})\in \Poly{k-1}(F)\times \Poly[c]{k+1}(F)$ is such that $\uGFh\underline{q}_F=\uvec{0}$, then $\cGpF q_{\partial F}=0$ and hence $q_{\partial F}=C$ for some $C\in\Real$ (since $q_{\partial F}$ is continuous and its derivative vanishes on each $E\in\EF$, and $\partial F$ is connected). Plugging this result into the definition \eqref{eq:GF.k} of $\cGF$ and using the integration by parts formula \eqref{eq:ipp:div.grad.F} with $C$ instead of $q_F$, we infer, for all $\bvec{w}_F\in\Poly{k}(F)^2$,
  \begin{equation}\label{eq:for.exact.proj}
    \int_F\cGF\underline{q}_F\cdot\bvec{w}_F=-\int_F q_F\DIV_F\bvec{w}_F
    + \sum_{E\in\EF}\omega_{FE}\int_E C(\bvec{w}_F\cdot\normal_{FE})
    =\int_F (C-q_F)\DIV_F\bvec{w}_F.
  \end{equation}
  Using $\uGFh\underline{q}_F=\uvec{0}$, we see that the left-hand side vanishes and, since $\DIV_F:\Poly{k}(F)^2\to \Poly{k-1}(F)$ is surjective (consequence of \eqref{eq:iso:DIV}) and $q_F\in\Poly{k-1}(F)$,
  this yields $\lproj{k-1}{F}C=\lproj{k-1}{F}q_F=q_F$. Together with $q_{\partial F}=C$, this establishes that $\uIgrad[F]C=\underline{q}_F$, which concludes the proof of the inclusion $\supset$ in \eqref{eq:exact.uGFh.ker}.
  \medskip\\
  \underline{2. \emph{Proof of \eqref{eq:exact.CF.ker.1}.}}
  Let $\underline{q}_F\in \Poly{k-1}(F)\times \Poly[c]{k+1}(\partial F)$ and write, using the definition \eqref{eq:chCF} of $\hcCF$, for all $r_F\in\Poly{k}(F)$,
  \begin{align*}
    \int_F\hcCF(\uGFh\underline{q}_F) r_F={}& \int_F \cGF\underline{q}_F\cdot\VROT_F r_F - 
    \sum_{E\in\EF}\omega_{FE}\int_E \cGpF\underline{q}_F r_F\\
    ={}& \sum_{E\in\EF}\omega_{FE}\int_E\left[
      q_{\partial F}\VROT_F r_F \cdot\normal_{FE}-(q_{\partial F})_{|E}' r_F
      \right],
  \end{align*}
  where the second line follows using the definitions \eqref{eq:GF.k} of $\cGF$ with $\bvec{w}_F=\VROT_F r_F$ (additionally noticing that $\DIV_F(\VROT_F r_F)=0$) and \eqref{eq:GE.k} of $\cGpF$. We then use the integration by parts formula \eqref{eq:ipp:div.grad.E} on each $E\in\EF$ to obtain
  \begin{equation}\label{eq:cancel.qV}
    \begin{aligned}
      \int_F\hcCF(\uGFh\underline{q}_F) r_F={}& \sum_{E\in\EF}\omega_{FE}\int_E q_{\partial F}\cancel{\left(\VROT_F r_F \cdot\normal_{FE}+(r_F)_{|E}'\right)}
      \\
      &- \sum_{E\in\EF}\omega_{FE}\left(
      q_{\partial F}(\bvec{x}_{E,2})r_F(\bvec{x}_{E,2}) - q_{\partial F}(\bvec{x}_{E,1}) r_F(\bvec{x}_{E,1})
      \right)=0,
    \end{aligned}
  \end{equation}
  where the cancellation comes from
\begin{equation}\label{eq:vrotF.dE}
\VROT_F r_F\cdot\normal_{FE}=\GRAD_F r_F\cdot(\rotation{-\nicefrac\pi2}^t\normal_{FE})=\GRAD_F r_F\cdot(\rotation{\nicefrac\pi2}\normal_{FE})=-\GRAD_F r_F\cdot\tangent_E=-(r_F)_{|E}'
\end{equation}
(since $(\tangent_E,\normal_{FE})$ is right-handed in $F$), and we have concluded using the fact that, for all $V\in\VT$, the term $q_{\partial F}(\bvec{x}_V) r_F(\bvec{x}_V)$ appears exactly twice in the last sum, with opposite signs. This proves \eqref{eq:exact.CF.ker.1}.
  \medskip\\
  \underline{3. \emph{Proof of \eqref{eq:exact.CF.ker.2}.}}
  Suppose that $\uvec{v}_F\in \Poly{k}(F)^2\times\Poly{k}(\EF)$ is such that $\hcCF\uvec{v}_F=0$. Then, \eqref{eq:chCF} with $r_F=1$ shows that $\sum_{E\in\EF}\omega_{FE}\int_E v_{\partial F}=0$ and thus, by Proposition \ref{prop:surj.GbdryF}, there exists $q_{\partial F}\in\Poly[c]{k+1}(\partial F)$ such that $\cGpF q_{\partial F}=v_{\partial F}$. This establishes the first conclusion in \eqref{eq:exact.CF.ker.2}.

  Using $\hcCF\uvec{v}_F=0$ and again the definition \eqref{eq:chCF} of $\hcCF$, for all $r_F\in\Poly{k}(F)$ we then have
  \begin{equation}\label{eq:exact.CF.middle}
    \begin{aligned}
      \int_F \bvec{v}_F\cdot\VROT_Fr_F
      &= \sum_{E\in\EF}\omega_{FE}\int_E \cGpF q_{\partial F}~r_F\\
      &= -\sum_{E\in\EF}\omega_{FE}\int_E q_{\partial F} (r_F)_{|E}'\\
      &= \sum_{E\in\EF}\omega_{FE}\int_E q_{\partial F}(\VROT_F r_F\cdot\normal_{FE})\\
      &= \int_F \cGF(q_F,q_{\partial F}) \cdot\VROT_F r_F,
    \end{aligned}
  \end{equation}
  where the second line follows integrating by parts on each edge and cancelling out the vertex values in a similar way as in \eqref{eq:cancel.qV},
  the third line from \eqref{eq:vrotF.dE},
  and the conclusion is obtained applying the definition \eqref{eq:GF.k} of $\cGF$ to $\bvec{w}_F=\VROT_F r_F$ (which satisfies $\DIV_F \bvec{w}_F=0$) and $\underline{q}_F=(q_F,q_{\partial F})$ for an arbitrary $q_F\in\Poly{k-1}(F)$. Since \eqref{eq:exact.CF.middle} is valid for any $r_F\in\Poly{k}(F)$, this proves the second conclusion in \eqref{eq:exact.CF.ker.2}.

  To conclude the proof of \eqref{eq:exact.CF.ker.2}, we need to identify a specific $q_F\in\Poly{k-1}(F)$ such that 
  $\ROproj{k}{F}\bvec{v}_F = \ROproj{k}{F}(\cGF\underline{q}_F)$, that is to say, for any $\bvec{w}_F\in\Roly{k}(F)^\perp$, 
  \begin{align*}
    \int_F \bvec{v}_F \cdot \bvec{w}_F
    ={} \int_F \cGF\underline{q}_F\cdot\bvec{w}_F
    ={} -\int_F q_F \DIV_F\bvec{w}_F + \sum_{E\in\EF}\omega_{FE}\int_E q_{\partial F}(\bvec{w}_F\cdot\normal_{FE}),
  \end{align*}
  where we have used the definition \eqref{eq:GF.k} of $\cGF$ in the second passage.
  Since $q_{\partial F}$ is already given, we simply have to take $q_F\in\Poly{k-1}(F)$ such that:
  \[
  \int_F q_F \DIV_F\bvec{w}_F = -\int_F \bvec{v}_F \cdot \bvec{w}_F + \sum_{E\in\EF}\omega_{FE}\int_E q_{\partial F}(\bvec{w}_F\cdot\normal_{FE})\qquad\forall \bvec{w}_F\in\Roly{k}(F)^\perp.
  \]
  By \eqref{eq:iso:DIV}, this relation defines $q_F$ uniquely.
  \medskip\\
  \underline{4. \emph{Proof of \eqref{eq:exact.CF.Im}.}}
  We only have to prove $\Poly{k}(F)\subset \Image\hcCF$. Let $q_F\in\Poly{k}(F)$. Since $\ROT_F:\Poly{k+1}(F)^2\to \Poly{k}(F)$ is surjective (this is a consequence of its definition \eqref{eq:def:ROTF} along with the surjectivity of $\DIV_F:\Poly{k+1}(F)^2\to\Poly{k}(F)$, which follows from \eqref{eq:iso:DIV}), there
  is $\bvec{v}\in\Poly{k+1}(F)^2$ such that $\ROT_F\bvec{v}=q_F$. Hence, using the polynomial consistency of $\lproj{k}{F}$ followed by the commutation property \eqref{eq:CFh.commutation}, we have $q_F=\ROT_F\bvec{v}=\lproj{k}{F}(\ROT_F\bvec{v})=\hcCF(\uhIrot\bvec{v})\in\Image\hcCF$, which is the desired result.
\end{proof}

\subsection{An exact two-dimensional sequence}\label{sec:2d.sequence:exact}

Proposition \ref{prop:almost.exact} shows that the defect of exactness of the sequence \eqref{eq:almost.exact} lies in the domain of $\hcCF$/co-domain of $\uGFh$, which is too large.
Specifically, the space $\Poly{k}(F)^2\times \Poly{k}(\EF)$ in this sequence must be restricted to its subspace $\left(\Roly{k-1}(F)\oplus \Roly{k}(F)^\perp\right)\times \Poly{k}(\EF)$, which still contains sufficient information to reconstruct a discrete curl satisfying the commutation property \eqref{eq:CFh.commutation} (cf. Remark \ref{rem:CFh:face.unknown}).
Obviously, this restriction requires to project $\uGFh$ onto this space in order for the sequence to be well-defined.

The domain of the reconstructed gradient does not change, so we take as discrete counterpart of the space $H^1(F)$ in the sequence \eqref{eq:continuous.sequence:2D} the space
\[
\uHgrad[F]\coloneq \Poly{k-1}(F)\times \Poly[c]{k+1}(\partial F)
\]
and, as before, a generic vector $\underline{q}_F\in \uHgrad[F]$ is denoted by $(q_F,q_{\partial F})$ with $q_F\in\Poly{k-1}(F)$ and $q_{\partial F}\in\Poly[c]{k+1}(\partial F)$.
The interpolator on $\uHgrad[F]$ does not change either:
\[
\begin{gathered}
  \uIgrad[F] q \coloneq (q_F,q_{\partial F})\in\Poly{k-1}(F)\times \Poly[c]{k+1}(\partial F)\text{ with $q_F=\lproj{k-1}{F} q$,}
  \\
  \text{
    $\lproj{k-1}{E}(q_{\partial F})_{|E}=\lproj{k-1}{E} q_{|E}$ for all $E\in\EF$,
    and $q_{\partial F}(\bvec{x}_V)=q(\bvec{x}_V)$ for all $V\in\VF$.
  }
\end{gathered}
\]
The domain of the reconstructed curl, which plays the role of the space $\Hrot{F}$ at the discrete level, is now
\[
\uHrot\coloneq \left(\Roly{k-1}(F)\oplus \Roly{k}(F)^\perp\right)\times \Poly{k}(\EF),
\]
and a generic vector $\uvec{v}_F\in\uHrot$ is decomposed into $(\bvec{v}_F=\bvec{v}_{\cvec{R},F}+\bvec{v}_{\cvec{R},F}^\perp,v_{\partial F})$ with $\bvec{v}_{\cvec{R},F}\in \Roly{k-1}(F)$, $\bvec{v}_{\cvec{R},F}^\perp\in \Roly{k}(F)^\perp$, and $v_{\partial F}\in\Poly{k}(\EF)$.

The discrete gradient operator $\uGF:\uHgrad[F]\to \uHrot$ is defined by projecting the components of $\uGFh$ onto the corresponding components of $\uHrot$:
For all $\underline{q}_F\in\uHgrad[F]$,
\begin{equation}\label{eq:def.GF.GOF}
  \begin{gathered}
    \uGF\underline{q}_F\coloneq(\GRF\underline{q}_F+\GROF\underline{q}_F,\cGpF q_{\partial F})
    \\
    \text{with $\GRF\coloneq\Rproj{k-1}{F}\cGF$ and $\GROF\coloneq\ROproj{k}{F}\cGF$.}
  \end{gathered}
\end{equation}
It can easily be checked from \eqref{eq:GF.k} that the following two relations characterise $\GRF$ and $\GROF$: For all $\underline{q}_F\in\uHgrad[F]$,
\begin{subequations}\label{eq:uGF}
  \begin{alignat}{2}\label{eq:GF}      
    \int_F\GRF\underline{q}_F\cdot\bvec{w}_F
    &= \sum_{E\in\EF}\omega_{FE}\int_E q_{\partial F} (\bvec{w}_F\cdot\normal_{FE})
    &\quad& \forall\bvec{w}_F\in\Roly{k-1}(F),
    \\ \label{eq:GOF}
    \int_F\GROF\underline{q}_F\cdot\bvec{w}_F
    &=
    -\int_F q_F\DIV_F\bvec{w}_F
    +\sum_{E\in\EF}\omega_{FE}\int_E q_{\partial F} (\bvec{w}_F\cdot\normal_{FE})
    &\;& \forall\bvec{w}_F\in\Roly{k}(F)^\perp.
  \end{alignat}
\end{subequations}
The discrete curl operator $\cCF:\uHrot\to\Poly{k}(F)$ is given by the restriction of $\hcCF$ to $\uHrot$, that is:
For all $\uvec{v}_F=(\bvec{v}_{\cvec{R},F}+\bvec{v}_{\cvec{R},F}^\perp, v_{\partial F})\in\uHrot$,
\begin{equation}\label{eq:cCF}
  \int_F\cCF\uvec{v}_F~r_F
  = \int_F\bvec{v}_{\cvec{R},F}\cdot\VROT_F r_F
  - \sum_{E\in\EF}\omega_{FE}\int_E v_{\partial F}r_F\qquad
  \forall r_F\in\Poly{k}(F).
\end{equation}
Notice that, in the integral over $F$, we have removed the component $\bvec{v}_{\cvec{R},F}^\perp$ of $\bvec{v}_F$ accounting for the fact that it is $L^2$-orthogonal to $\VROT_F r_F\in\Roly{k-1}(F)\subset\Roly{k}(F)$.

Letting $\uIrot:H^1(F)^2\to\uHrot$ be the interpolator obtained projecting $\uhIrot$, that is, for all $\bvec{v}\in H^1(F)^2$,
\begin{equation}\label{eq:uIrot}
  \uIrot\bvec{v}\coloneq\big(
  \Rproj{k-1}{F}\bvec{v}+
  \ROproj{k}{F}\bvec{v},
  (\lproj{k}{E}(\bvec{v}\cdot\tangent_E)\big)_{E\in\EF}
  \big),
\end{equation}
we have, following Remark \ref{rem:CFh:face.unknown}, the commutation property
\begin{equation}\label{eq:CF.commutation}
  \cCF(\uIrot\bvec{v}) = \lproj{k}{F}(\ROT_F\bvec{v})\qquad\forall\bvec{v}\in H^1(F)^2.
\end{equation}

\begin{theorem}[Exact two-dimensional sequence]\label{th:exact.2D}
  The following sequence is exact:
  \begin{equation}\label{eq:sequence.2D}
    \begin{tikzcd}
      \Real\arrow{r}{\uIgrad[F]} & \uHgrad[F]\arrow{r}{\uGF} & \uHrot\arrow{r}{\cCF} & \Poly{k}(F)\arrow{r}{0} & \{0\}.
    \end{tikzcd}
  \end{equation}
\end{theorem}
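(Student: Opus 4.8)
The plan is to deduce the exactness of \eqref{eq:sequence.2D} entirely from the properties of the almost-exact sequence gathered in Proposition \ref{prop:almost.exact}, exploiting that $\uGF$ and $\cCF$ are obtained from $\uGFh$ and $\hcCF$ by projecting and restricting, respectively. Concretely, I would establish four facts: (i) $\uIgrad[F]$ is injective; (ii) $\Ker\uGF=\uIgrad[F]\Real$; (iii) $\Ker\cCF=\Image\uGF$; and (iv) $\cCF$ is surjective. Fact (i) is immediate: if $\uIgrad[F]C=\uvec 0$ then its boundary component $q_{\partial F}$ vanishes, whence $C=q_{\partial F}(\bvec x_V)=0$ for any $V\in\VF$ by \eqref{eq:uIgradF}.

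For (ii), the strategy is to show $\Ker\uGF=\Ker\uGFh$ and then invoke \eqref{eq:exact.uGFh.ker}. The inclusion $\Ker\uGFh\subset\Ker\uGF$ is trivial, since projecting $\uvec 0$ gives $\uvec 0$. The reverse inclusion is the one genuinely delicate point: a priori $\uGF$ discards the component of $\cGF\underline q_F$ lying in $\Roly{k}(F)\cap\Roly{k-1}(F)^\perp$, so the vanishing of $\uGF\underline q_F$ need not obviously force $\cGF\underline q_F=\bvec 0$. The way around this is to note that $\uGF\underline q_F=\uvec 0$ gives, by directness of the sum $\Roly{k-1}(F)\oplus\Roly{k}(F)^\perp$, both $\cGpF q_{\partial F}=0$ (whence $q_{\partial F}$ equals a constant $C$) and $\ROproj{k}{F}\cGF\underline q_F=\bvec 0$, i.e. $\cGF\underline q_F\in\Roly{k}(F)$. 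Since every element of $\Roly{k}(F)=\VROT_F\Poly{k+1}(F)$ is divergence-free, testing the identity \eqref{eq:for.exact.proj} (valid as soon as $q_{\partial F}$ is the constant $C$) against $\bvec w_F=\cGF\underline q_F$ makes the right-hand side vanish, forcing $\norm[L^2(F)^2]{\cGF\underline q_F}^2=0$. Hence $\uGFh\underline q_F=\uvec 0$, i.e. $\underline q_F\in\Ker\uGFh$.

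For (iii) I would treat the two inclusions separately. The inclusion $\Image\uGF\subset\Ker\cCF$ reduces to \eqref{eq:exact.CF.ker.1}: writing \eqref{eq:cCF} for $\uvec v_F=\uGF\underline q_F$ and observing that $\VROT_F r_F\in\Roly{k-1}(F)$ lets me remove $\Rproj{k-1}{F}$ and replace $\GRF\underline q_F$ by $\cGF\underline q_F$ in the face integral, so that $\int_F\cCF(\uGF\underline q_F)\,r_F=\int_F\hcCF(\uGFh\underline q_F)\,r_F=0$. For $\Ker\cCF\subset\Image\uGF$, I first note that $\cCF$ and $\hcCF$ coincide on $\uHrot$ (the removed component $\bvec v_{\cvec R,F}^\perp$ is $L^2$-orthogonal to $\VROT_F r_F$), so any $\uvec v_F\in\Ker\cCF$ lies in $\Ker\hcCF$ and \eqref{eq:exact.CF.ker.2} furnishes $\underline q_F$ with $v_{\partial F}=\cGpF q_{\partial F}$, $\Rproj{k-1}{F}\bvec v_F=\GRF\underline q_F$, and $\ROproj{k}{F}\bvec v_F=\GROF\underline q_F$. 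Decomposing $\bvec v_F=\bvec v_{\cvec R,F}+\bvec v_{\cvec R,F}^\perp$ and using the nesting $\Roly{k-1}(F)\subset\Roly{k}(F)$ to compute $\Rproj{k-1}{F}\bvec v_F=\bvec v_{\cvec R,F}$ and $\ROproj{k}{F}\bvec v_F=\bvec v_{\cvec R,F}^\perp$, these three identities read exactly $\uGF\underline q_F=\uvec v_F$.

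Finally, for (iv) I would not try to inherit surjectivity directly from \eqref{eq:exact.CF.Im}, because the interpolant used there, $\uhIrot\bvec v$, lands in the larger space $\Poly{k}(F)^2\times\Poly{k}(\EF)$ rather than in $\uHrot$; this is precisely where restricting the domain of the curl must be handled with care. Instead I would use the projected commutation property \eqref{eq:CF.commutation}: given $q_F\in\Poly{k}(F)$, the surjectivity of $\ROT_F:\Poly{k+1}(F)^2\to\Poly{k}(F)$ provides $\bvec v$ with $\ROT_F\bvec v=q_F$, and then $\cCF(\uIrot\bvec v)=\lproj{k}{F}(\ROT_F\bvec v)=q_F$ since $q_F\in\Poly{k}(F)$; as $\uIrot\bvec v\in\uHrot$, this yields $q_F\in\Image\cCF$. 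The \emph{main obstacle} is really the reverse kernel inclusion in (ii): everything else is bookkeeping with the $L^2$-orthogonal decomposition $\Poly{k}(F)^2=\Roly{k}(F)\oplus\Roly{k}(F)^\perp$ and the nesting $\Roly{k-1}(F)\subset\Roly{k}(F)$, whereas there one must exploit the divergence-free structure of $\Roly{k}(F)$ to recover the full gradient from its surviving orthogonal projection.
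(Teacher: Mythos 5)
Your proposal is correct and follows essentially the same route as the paper: exactness is deduced from Proposition \ref{prop:almost.exact} by tracking how the projection defining $\uGF$ and the restriction defining $\cCF$ interact with the orthogonal decomposition $\Poly{k}(F)^2=\Roly{k}(F)\oplus\Roly{k}(F)^\perp$, and surjectivity of $\cCF$ is obtained from the commutation property \eqref{eq:CF.commutation} applied to $\uIrot$. The only local difference is in the kernel characterisation of $\uGF$: the paper tests \eqref{eq:for.exact.proj} against generic $\bvec{w}_F\in\Roly{k}(F)^\perp$ and uses the surjectivity of $\DIV_F:\Roly{k}(F)^\perp\to\Poly{k-1}(F)$ to conclude $q_F=\lproj{k-1}{F}C$ directly, whereas you test against the divergence-free field $\cGF\underline{q}_F$ itself to first obtain $\cGF\underline{q}_F=\bvec{0}$ and then invoke \eqref{eq:exact.uGFh.ker}; both arguments are valid and rest on the same ingredients.
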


\begin{remark}[Comparison with Finite Elements]\label{rem:comparison.2D}
  When $F$ is a triangle or a rectangle, the sequence \eqref{eq:sequence.2D} can be compared with classical FE sequences: for the triangular case, \mbox{($\Poly{k+1}(F)$, $\rotation{\pi/2}\RT{k}(F)$, $\Poly{k}(F)$)}, with $\rotation{\pi/2}\RT{k}(F)$ denoting the rotated two-dimensional Raviart--Thomas space \cite{Raviart.Thomas:77}; for the rectancular case, \mbox{($\Qoly{k+1}(F)$, $\bQoly{k,-}(F)$, $\Qoly{k}(F)$)}, where $\Qoly{\ell}(F)$ denotes the space of polynomials of total degree $\le\ell$ in each variable and $\bQoly{k,-}$ is the space defined, e.g. in \cite{Arnold:18}.
  The number of discrete unknowns in each case is reported in Table \ref{tab:comparison.2D}.
  On triangles, the number of discrete unknowns attached to the lowest-dimensional geometric support is the same as for Finite Elements for each space in the sequence, but our sequence has more internal unknowns.
  This phenomenon is known from the Virtual Element literature and can be countered using serendipity spaces; see, e.g., \cite{Beirao-da-Veiga.Brezzi.ea:18*2}.
  Notice, however, that it is not clear whether resorting to serendipity versions of the proposed spaces would have an impact on the exactness property: this point is left for future investigation.
  On rectangles, on the other hand, our sequence has fewer unknowns than the corresponding Finite Elements sequence.
  
  Two points are worth mentioning to close this remark.
  First, when writing a scheme, internal unknowns can usually be locally eliminated by static condensation; see, e.g., \cite[Section B.3.2]{Di-Pietro.Droniou:20}.
  This strategy essentially mitigates the impact of the number of internal unknowns in practical computations.
  Second, the proposed construction can in principle be combined with standard Finite Elements on meshes that contain both standard and polygonal/polyhedral elements.  
\end{remark}

\begin{table}\centering
  \begin{small}
    \begin{tabular}{c|c|c|c|cc|cc}
      \toprule
      & \multirow{2}{*}{$k$} & \multirow{2}{*}{$V\in\VT$} & \multirow{2}{*}{$E\in\ET$} & \multicolumn{2}{c|}{$F\in\FT$} & \multicolumn{2}{c}{Total}
      \\
      & & & & Tria & Rect & Tria & Rect
      \\\midrule
      \multirow{4}{*}{$\uHgrad[F]$} 
      & 0 & \cellcolor{black!10}{1} & \cellcolor{black!10}{0} & 0 (0) & 0 (0) & 3 (3)   & 4 (4)
      \\
      & 1 & \cellcolor{black!10}{1} & \cellcolor{black!10}{1} & 1 (0) & 1 (1) & 7 (6)   & 9 (9)
      \\
      & 2 & \cellcolor{black!10}{1} & \cellcolor{black!10}{2} & 3 (1) & 3 (4) & 12 (10) & 15 (16)
      \\
      & 3 & \cellcolor{black!10}{1} & \cellcolor{black!10}{3} & 6 (3) & 6 (9) & 18 (15) & 22 (25)    
      \\\midrule
      \multirow{4}{*}{$\uHrot$} 
      & 0 & & \cellcolor{black!10}{1} & 0 (0)   & 0 (0)   & 3 (3)   & 4 (4)
      \\
      & 1 & & \cellcolor{black!10}{2} & 3 (2)   & 3 (4)   & 9 (8)   & 11 (12)
      \\
      & 2 & & \cellcolor{black!10}{3} & 8 (6)   & 8 (12) & 17 (15) & 20 (24)
      \\
      & 3 & & \cellcolor{black!10}{4} & 15 (12) & 15 (24) & 27 (24) & 31 (40)
      \\\midrule
      \multirow{4}{*}{$\Poly{k}(F)$} 
      & 0 & & & \cellcolor{black!10}{1} & 1 (1) & \cellcolor{black!10}{1} & 1 (1)
      \\
      & 1 & & & \cellcolor{black!10}{3} & 3 (4) & \cellcolor{black!10}{3} & 3 (4)
      \\
      & 2 & & & \cellcolor{black!10}{6} & 6 (9) & \cellcolor{black!10}{6} & 6 (9)
      \\
      & 3 & & & \cellcolor{black!10}{10} & 10 (16) & \cellcolor{black!10}{10} & 10 (16)
      \\\bottomrule
    \end{tabular}
  \end{small}
  \caption{Number of discrete unknowns attached to each geometric entity for the two-dimensional sequence \eqref{eq:sequence.2D} on a triangle or rectangle $F$ for $k\in\{0,\ldots,3\}$.
    For comparison, we also report in parentheses the number of DOFs of the spaces in the corresponding FE sequences when the latter are different.
    Gray cells highlight the cases for which, irrespectively of the degree $k$, the number of DOFs are the same for the two-dimensional sequence \eqref{eq:sequence.2D} and for the FE sequences.
    \label{tab:comparison.2D}}  
\end{table}

\begin{proof}[Proof of Theorem \ref{th:exact.2D}]
  We have to prove that
  \begin{align}\label{eq:Ker.uGF}
    \uIgrad[F]\Real &= \Ker\uGF,
    \\\label{eq:Ker.CF}
    \Image\uGF &= \Ker\cCF,
    \\\label{eq:Image.CF}
    \Image\cCF &= \Poly{k}(F).
  \end{align}
  \\
  \underline{1. \emph{Proof of \eqref{eq:Ker.uGF}.}} 
  Since $\uGF$ is a projection of $\uGFh$, we have $\Ker\uGFh\subset\Ker\uGF$ and, thus, \eqref{eq:exact.uGFh.ker} gives
  \[
  \boxed{\uIgrad[F]\Real\subset \Ker\uGF.}
  \]

  Assume now that $\underline{q}_F\in\uHgrad[F]$ is such that $\uGF\underline{q}_F=\uvec{0}$. As in the proof of \eqref{eq:exact.uGFh.ker}, since the boundary components of $\uGF$ and $\uGFh$ are the same, this shows that $q_{\partial F}=C$ for some constant $C\in\Real$.
  Moreover, equation \eqref{eq:for.exact.proj} still holds, and can be applied to a generic $\bvec{w}_F\in\Roly{k}(F)^\perp$ to infer
  \[
  \int_F(C-q_F)\DIV_F \bvec{w}_F=\int_F \cGF\underline{q}_F\cdot\bvec{w}_F=\int_F \GROF \underline{q}_F\cdot\bvec{w}_F = 0,
  \]
  where the second equality follows from the definition of $\ROproj{k}{F}$ recalling that $\GROF=\ROproj{k}{F}\cGF$ (see \eqref{eq:def.GF.GOF}.
  Together with the surjectivity of $\DIV_F:\Roly{k}(F)^\perp\to\Poly{k-1}(F)$, this ensures as before that $q_F=\lproj{k-1}{F}C$, which concludes the proof that $\underline{q}_F=\uIgrad[F]C$.
  Hence, we have
  \[
  \boxed{\Ker\uGF\subset\uIgrad[F]\Real,}
  \]
  concluding the proof of \eqref{eq:Ker.uGF}.
  \medskip\\
  \underline{2. \emph{Proof of \eqref{eq:Ker.CF}.}}
  Let $\underline{q}_F\in \uHgrad[F]$ and set $\uvec{v}_F\coloneq\uGF\underline{q}_F$, that is, by definition \eqref{eq:def.GF.GOF} of $\uGF$,
  $\uvec{v}_F=(\bvec{v}_F=\Rproj{k-1}{F}\bvec{w}_F+\ROproj{k}{F}\bvec{w}_F,w_{\partial F})$ with $\uvec{w}_F=\uGFh\underline{q}_F$. By \eqref{eq:exact.CF.ker.1}, we have $\hcCF\uvec{w}_F=0$.
  Using the definitions \eqref{eq:chCF} and \eqref{eq:cCF} of $\hcCF\uvec{w}_F$ and $\cCF\uvec{v}_F$ and the relation
  \[
  \int_F \bvec{w}_F\cdot\VROT_F r_F
  = \int_F \Rproj{k-1}{F}\bvec{w}_F\cdot\VROT_Fr_F
  = \int_F \bvec{v}_{\cvec{R},F}\cdot\VROT_Fr_F\qquad\forall r_F\in\Poly{k}(F),
  \]
  which follows from the definitions of $\Rproj{k-1}{F}$ and $\bvec{v}_{\cvec{R},F}$,
  we see that $0=\hcCF\uvec{w}_F=\cCF\uvec{v}_F$. Hence, $\cCF(\uGF\underline{q}_F)=0$ and
  \begin{equation}\label{eq:Image.uGF.subset.Ker.CF}
    \boxed{\Image\uGF\subset\Ker\cCF.}
  \end{equation}

  Let now $\uvec{v}_F=(\bvec{v}_F=\bvec{v}_{\cvec{R},F}+\bvec{v}_{\cvec{R},F}^\perp,v_{\partial F})\in\Ker\cCF=\uHrot\cap\Ker\hcCF$.
  By \eqref{eq:exact.CF.ker.2}, there is $\underline{q}_F\in\Poly{k-1}(F)\times\Poly[c]{k+1}(\partial F)=\uHgrad[F]$ such that $v_{\partial F}=\cGpF q_{\partial F}$, $\Rproj{k-1}{F}\bvec{v}_F=\Rproj{k-1}{F}(\cGF\underline{q}_F)=\GRF\underline{q}_F$ and $\ROproj{k}{F}\bvec{v}_F=\ROproj{k}{F}(\cGF\underline{q}_F)=\GROF\underline{q}_F$. Since $\Roly{k-1}(F)\subset \Roly{k}(F)$ is orthogonal to $\Roly{k}(F)^\perp$, we have $\Rproj{k-1}{F}\bvec{v}_F=\bvec{v}_{\cvec{R},F}$ and $\ROproj{k}{F}\bvec{v}_F=\bvec{v}_{\cvec{R},F}^\perp$, which proves that $\uvec{v}_F=\uGF \underline{q}_F$. Hence,
  \[
  \boxed{\Ker\cCF\subset \Image\uGF}
  \]
  and the second exactness property \eqref{eq:Ker.CF} is proved.
  \medskip\\
  \underline{3. \emph{Proof of \eqref{eq:Image.CF}.}}
  Consequence of the commutation property \eqref{eq:CF.commutation} proceeding as in the proof of \eqref{eq:exact.CF.Im}.
\end{proof}

\subsection{Two-dimensional potentials}\label{sec:2d:potentials}

We next exhibit reconstructions of the potential for each space in the sequence.

\subsubsection{Scalar potential (scalar trace)}\label{sec:trF}

We start with a scalar potential reconstruction $\trF:\uHgrad[F]\to\Poly{k+1}(F)$ which, when $F$ represents a face of a polyhedron $T$, plays the role of a reconstruction of the scalar trace on $F$. The required properties on $\trF$ are the following:
\begin{subequations}\label{eq:trF:prop}
  \begin{alignat}{2}
    \label{eq:trF:polynomial.consistency}
    &\trF(\uIgrad[F] q) = q&\qquad&\forall q\in\Poly{k+1}(F),\\
    \label{eq:trF:projection}
    &\lproj{k-1}{F}(\trF\underline{q}_F)= q_F&\qquad&\forall \underline{q}_F=(q_F,q_{\partial F})\in\uHgrad[F].
  \end{alignat}
\end{subequations}
The first property expresses the polynomial consistency of the reconstruction, while the second enforces its projection on $\Poly{k-1}(F)$ and shows that $\trF$ has to be a higher-order correction of the projection $\uHgrad[F]\ni\underline{q}_F=(q_F,q_{\partial F})\mapsto q_F\in\Poly{k-1}(F)$.
It is easily checked that, if $\trFtilde:\uHgrad[F]\to\Poly{k+1}(F)$ is a reconstruction that satisfies the consistency property \eqref{eq:trF:polynomial.consistency}, then a reconstruction $\trF$ satisfying \eqref{eq:trF:polynomial.consistency}--\eqref{eq:trF:projection} is obtained setting
\[
\trF\underline{q}_F = q_F + \trFtilde\underline{q}_F-\lproj{k-1}{F}(\trFtilde\underline{q}_F)\qquad\forall\underline{q}_F\in\uHgrad[F].
\]

\begin{remark}[A consistent potential reconstruction]
  There are several ways to devise a reconstruction $\trFtilde:\uHgrad[F]\to\Poly{k+1}(F)$ that satisfies \eqref{eq:trF:polynomial.consistency}. One of them is to define, for all $\underline{q}_F\in\uHgrad[F]$, $\trFtilde\underline{q}_F\in\Poly{k+1}(F)$ such that
  \[
  \int_F\trFtilde\underline{q}_F\DIV_F\bvec{v}_F
  = -\int_F\cGF\underline{q}_F\cdot\bvec{v}_F
  + \sum_{E\in\EF}\omega_{FE}\int_E q_{\partial F}(\bvec{v}_F\cdot\normal_{FE})
  \qquad\forall\bvec{v}_F\in\Roly{k+2}(F)^\perp.
  \]
  This relation defines $\trFtilde\underline{q}_F$ uniquely since $\DIV_F:\Roly{k+2}(F)^\perp\to\Poly{k+1}(F)$ is an isomorphism (see \eqref{eq:iso:DIV}). The consistency property \eqref{eq:trF:polynomial.consistency} for this reconstruction can be checked setting $\underline{q}_F=\uIgrad[F] q$ for $q\in\Poly{k+1}(F)$, invoking the polynomial consistency property \eqref{eq:GF.k.cons} to infer $\cGF\underline{q}_F=\cGF(\uIgrad[F]q)=\GRAD_F q$, using the fact that $q_{\partial F}=q_{|\partial F}$ as a consequence of the definition \eqref{eq:uIgradF} of $\uIgrad[F]$ together with the isomorphism \eqref{eq:iso.Pc} and $q_{|\partial F}\in\Poly[\rm c]{k+1}(\partial F)$, and applying the integration by parts formula \eqref{eq:ipp:div.grad.F}.
\end{remark}

\subsubsection{Vector potential (tangential vector trace)}\label{sec:trFt}

We next define the two-dimensional vector potential $\trFt:\uHrot\to\Poly{k}(F)^2$ such that, for all $\uvec{v}_F\in\uHrot$,
\begin{subequations} \label{eq:trFt}
  \begin{alignat}{2} \label{eq:trFt:Roly.k}
    \int_F\trFt\uvec{v}_F\cdot\VROT_F r_F
    &= \int_F\cCF\uvec{v}_F~r_F
    + \sum_{E\in\EF}\omega_{FE}\int_E v_{\partial F} r_F
    &\qquad&\forall r_F\in\Poly{0,k+1}(F),
    \\ \label{eq:trFt:Roly.k.perp}
    \int_F\trFt\uvec{v}_F\cdot\bvec{w}_F
    &= \int_F\bvec{v}_{\cvec{R},F}^\perp\cdot\bvec{w}_F
    &\qquad&\forall\bvec{w}_F\in\Roly{k}(F)^\perp.
  \end{alignat}
\end{subequations}
To check that, given $\uvec{v}_F\in\uHrot$, \eqref{eq:trFt} defines a unique polynomial $\trFt\uvec{v}_F\in\Poly{k}(F)^2$, observe that \eqref{eq:trFt:Roly.k} and \eqref{eq:trFt:Roly.k.perp} prescribe, respectively, $\Rproj{k}{F}(\trFt\uvec{v}_F)$ (use  \eqref{eq:iso:VROTF.GRAD}) and $\ROproj{k}{F}(\trFt\uvec{v}_F)$, and recall the orthogonal decomposition $\Poly{k}(F)^2=\Roly{k}(F)\oplus\Roly{k}(F)^\perp$.
When $F$ represents a face of a polyhedron $T$, $\trFt$ corresponds to a reconstruction of the tangential trace.

\begin{remark}[Validity of~\eqref{eq:trFt:Roly.k}]\label{rem:valid.eq:trFt:Roly.k}
  Observing that both sides of \eqref{eq:trFt:Roly.k} vanish for $r_F = 1$ (use the definition of $\cCF$
  for the right-hand side), we deduce that \eqref{eq:trFt:Roly.k} holds in fact for any $r_F\in\Poly{k+1}(F)$.
\end{remark}

\begin{remark}[Alternative reconstructions]\label{rem:trFt:alternative}
    Alternative vector potential reconstructions can also be considered in practical applications.
    If it is required, e.g., that the $L^2$-orthogonal projection of the potential reconstruction on $\Roly{k-1}(F)$ coincides with the internal unknown $\bvec{v}_{\cvec{R},F}$, one can take $\trFt\uvec{v}_F-\Rproj{k-1}{F}\trFt\uvec{v}_F+\bvec{v}_{\cvec{R},F}$ as two-dimensional vector potential, with $\trFt$ defined by \eqref{eq:trFt}.
\end{remark}

We now state and prove two propositions on the commutation properties of the vector potential reconstruction.

\begin{proposition}[Commutation property for the two-dimensional vector potential reconstruction]\label{prop:trFt:consistency.1}
  For all $\bvec{v}\in H^1(F)^2$ such that $\ROT_F\bvec{v}\in\Poly{k}(F)$ and $\bvec{v}_{|E}\cdot\tangent_E\in\Poly{k}(E)$ for all $E\in\EF$, it holds
  \begin{equation} \label{eq:trFt:polynomial.consistency}
    \trFt(\uIrot\bvec{v}) = \vlproj{k}{F}\bvec{v}.
  \end{equation}
\end{proposition}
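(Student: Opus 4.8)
The plan is to exploit that $\trFt\uvec{v}_F$ is \emph{uniquely} determined by its two defining relations \eqref{eq:trFt:Roly.k} and \eqref{eq:trFt:Roly.k.perp}: as $r_F$ ranges over $\Poly{0,k+1}(F)$ the function $\VROT_F r_F$ covers all of $\Roly{k}(F)$ (isomorphism \eqref{eq:iso:VROTF.GRAD}), so \eqref{eq:trFt:Roly.k} fixes the component $\Rproj{k}{F}(\trFt\uvec{v}_F)$, while \eqref{eq:trFt:Roly.k.perp} fixes $\ROproj{k}{F}(\trFt\uvec{v}_F)$; together with $\Poly{k}(F)^2=\Roly{k}(F)\oplus\Roly{k}(F)^\perp$ this pins down $\trFt\uvec{v}_F$. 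I would therefore set $\uvec{v}_F\coloneq\uIrot\bvec{v}$ and merely verify that the candidate $\vlproj{k}{F}\bvec{v}$ satisfies both \eqref{eq:trFt:Roly.k} and \eqref{eq:trFt:Roly.k.perp}; the identity \eqref{eq:trFt:polynomial.consistency} then follows by uniqueness.

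Checking \eqref{eq:trFt:Roly.k.perp} is immediate. For any $\bvec{w}_F\in\Roly{k}(F)^\perp\subset\Poly{k}(F)^2$, the definition \eqref{eq:lproj} of the projector gives $\int_F\vlproj{k}{F}\bvec{v}\cdot\bvec{w}_F=\int_F\bvec{v}\cdot\bvec{w}_F=\int_F\ROproj{k}{F}\bvec{v}\cdot\bvec{w}_F$, and since the internal unknown of $\uIrot\bvec{v}$ satisfies $\bvec{v}_{\cvec{R},F}^\perp=\ROproj{k}{F}\bvec{v}$ by \eqref{eq:uIrot}, this is exactly the right-hand side of \eqref{eq:trFt:Roly.k.perp}.

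The component in $\Roly{k}(F)$ is the substantive step, and the one where both hypotheses enter. Fixing $r_F\in\Poly{0,k+1}(F)$, I would note $\VROT_F r_F\in\Roly{k}(F)\subset\Poly{k}(F)^2$, use \eqref{eq:lproj} to remove $\vlproj{k}{F}$, and then apply the integration by parts formula \eqref{eq:ipp:rot.F}, obtaining
\[
\int_F\vlproj{k}{F}\bvec{v}\cdot\VROT_F r_F
= \int_F\bvec{v}\cdot\VROT_F r_F
= \int_F(\ROT_F\bvec{v})~r_F + \sum_{E\in\EF}\omega_{FE}\int_E(\bvec{v}\cdot\tangent_E)~r_F.
\]
Here the hypothesis $\ROT_F\bvec{v}\in\Poly{k}(F)$, combined with the commutation property \eqref{eq:CF.commutation}, gives $\cCF(\uIrot\bvec{v})=\lproj{k}{F}(\ROT_F\bvec{v})=\ROT_F\bvec{v}$, turning the area term into $\int_F\cCF(\uIrot\bvec{v})~r_F$; and the hypothesis $\bvec{v}\cdot\tangent_E\in\Poly{k}(E)$ gives $(v_{\partial F})_{|E}=\lproj{k}{E}(\bvec{v}\cdot\tangent_E)=\bvec{v}\cdot\tangent_E$ for the boundary unknown of $\uIrot\bvec{v}$, turning the edge sum into $\sum_{E\in\EF}\omega_{FE}\int_E v_{\partial F}~r_F$. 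This reproduces the right-hand side of \eqref{eq:trFt:Roly.k}.

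I expect the only delicate point to be keeping track of exactly where each hypothesis is used: the removal of $\lproj{k}{F}$ (hidden inside $\cCF$ via \eqref{eq:CF.commutation}) and of $\lproj{k}{E}$ (hidden inside the interpolated boundary unknown) both rely on $\ROT_F\bvec{v}$ and $\bvec{v}\cdot\tangent_E$ being \emph{already} polynomial of the prescribed degrees. Without these two regularity assumptions the candidate $\vlproj{k}{F}\bvec{v}$ would fail \eqref{eq:trFt:Roly.k}, since the right-hand side only sees the projected/interpolated data.
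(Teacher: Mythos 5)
Your proposal is correct and takes essentially the same approach as the paper's proof: both rest on the commutation property \eqref{eq:CF.commutation}, the removal of the projectors $\lproj{k}{F}$ and $\lproj{k}{E}$ via the two regularity hypotheses, the integration by parts formula \eqref{eq:ipp:rot.F}, and the orthogonal decomposition $\Poly{k}(F)^2=\Roly{k}(F)\oplus\Roly{k}(F)^\perp$. The only difference is presentational -- you verify that the candidate $\vlproj{k}{F}\bvec{v}$ satisfies the defining relations \eqref{eq:trFt} and invoke uniqueness, whereas the paper runs the same chain of equalities starting from the defining relation of $\trFt(\uIrot\bvec{v})$; the two are the same argument read in opposite directions.
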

\begin{proof}
  Take $r_F\in\Poly{k+1}(F)$, write \eqref{eq:trFt:Roly.k} (using Remark \ref{rem:valid.eq:trFt:Roly.k}) for $\uvec{v}_F=\uIrot\bvec{v}$, and apply the commutation property \eqref{eq:CF.commutation} of $\cCF$ to get
  \[
  \int_F\trFt(\uIrot \bvec{v})\cdot\VROT_F r_F = \int_F\lproj{k}{F}(\ROT_F \bvec{v})~r_F
  + \sum_{E\in\EF}\omega_{FE}\int_E \lproj{k}{E}(\bvec{v}_{|E}\cdot\tangent_E) r_F.
  \]
  Using the assumptions on $\bvec{v}$ along with their definition \eqref{eq:lproj}, the projectors $\lproj{k}{F}$ and $\lproj{k}{E}$ can be removed from the equation above, and the integration by parts formula \eqref{eq:ipp:rot.F} then leads to
  \[
  \int_F\trFt(\uIrot \bvec{v})\cdot\VROT_F r_F = \int_F \bvec{v}\cdot\VROT_F r_F.
  \]
  The polynomial $r_F$ being arbitrary in $\Poly{k+1}(F)$, this relation implies $\Rproj{k}{F}(\trFt(\uIrot \bvec{v}))=\Rproj{k}{F}\bvec{v}$.
  On the other hand, \eqref{eq:trFt:Roly.k.perp} with $\uvec{v}_F=\uIrot \bvec{v}$ and the definition of $\uIrot$ yield $\ROproj{k}{F}(\trFt(\uIrot \bvec{v}))=\bvec{v}_{\cvec{R},F}^\perp=\ROproj{k}{F}\bvec{v}$. The relation \eqref{eq:trFt:polynomial.consistency} then follows using the decomposition $\Poly{k}(F)^2=\Roly{k}(F)\oplus \Roly{k}(F)^\perp$ to write
  \[
  \trFt(\uIrot \bvec{v})
  = \Rproj{k}{F}(\trFt(\uIrot \bvec{v}))+\ROproj{k}{F}(\trFt(\uIrot \bvec{v}))
  = \Rproj{k}{F}\bvec{v}+\ROproj{k}{F}\bvec{v}
  = \vlproj{k}{F}\bvec{v}.\qedhere
  \]
\end{proof}

\begin{proposition}[Two-dimensional vector potential reconstruction and gradient]
  \label{prop:trFt:consistency.2}
  It holds
  \begin{equation} \label{eq:trFt:GF}
    \trFt(\uGF\underline{q}_F) = \cGF\underline{q}_F\qquad\forall \underline{q}_F\in\uHgrad[F].
  \end{equation}
\end{proposition}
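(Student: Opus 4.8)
The plan is to use the characterisation noted right after \eqref{eq:trFt}: for a given $\uvec{v}_F\in\uHrot$, the polynomial $\trFt\uvec{v}_F\in\Poly{k}(F)^2$ is \emph{uniquely} determined by prescribing its $\Rproj{k}{F}$-component through \eqref{eq:trFt:Roly.k} and its $\ROproj{k}{F}$-component through \eqref{eq:trFt:Roly.k.perp}, via the decomposition $\Poly{k}(F)^2=\Roly{k}(F)\oplus\Roly{k}(F)^\perp$. Taking $\uvec{v}_F\coloneq\uGF\underline{q}_F$, it is therefore enough to show that the full gradient $\cGF\underline{q}_F$ satisfies both relations; \eqref{eq:trFt:GF} will then follow by uniqueness. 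It is worth stressing that $\cGF\underline{q}_F$ carries more information than the components $\GRF\underline{q}_F+\GROF\underline{q}_F$ retained in $\uGF\underline{q}_F$ — the part of $\Rproj{k}{F}\cGF\underline{q}_F$ lying in $\Roly{k}(F)\setminus\Roly{k-1}(F)$ is projected away when forming $\uGF$ — so the real content of the statement is that the potential recovers this discarded part.

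First I would dispatch \eqref{eq:trFt:Roly.k.perp}, which is immediate: by \eqref{eq:def.GF.GOF} the $\Roly{k}(F)^\perp$-component of $\uGF\underline{q}_F$ is $\bvec{v}_{\cvec{R},F}^\perp=\ROproj{k}{F}\cGF\underline{q}_F$, and the defining property of the $L^2$-projection gives $\int_F\ROproj{k}{F}(\cGF\underline{q}_F)\cdot\bvec{w}_F=\int_F\cGF\underline{q}_F\cdot\bvec{w}_F$ for all $\bvec{w}_F\in\Roly{k}(F)^\perp$. This is exactly \eqref{eq:trFt:Roly.k.perp} with $\cGF\underline{q}_F$ in the role of $\trFt\uvec{v}_F$.

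The main step is \eqref{eq:trFt:Roly.k}. Here I would first invoke the exactness already established in Theorem \ref{th:exact.2D}, namely $\Image\uGF\subset\Ker\cCF$ (see \eqref{eq:Image.uGF.subset.Ker.CF}), to obtain $\cCF(\uGF\underline{q}_F)=0$; the right-hand side of \eqref{eq:trFt:Roly.k} then reduces to $\sum_{E\in\EF}\omega_{FE}\int_E v_{\partial F}r_F$ with $v_{\partial F}=\cGpF q_{\partial F}$. It remains to prove that, for every $r_F\in\Poly{k+1}(F)$ (hence in particular for $r_F\in\Poly{0,k+1}(F)$),
\[
\int_F\cGF\underline{q}_F\cdot\VROT_F r_F=\sum_{E\in\EF}\omega_{FE}\int_E v_{\partial F}\,r_F.
\]
I would obtain this by testing the definition \eqref{eq:GF.k} of $\cGF$ with $\bvec{w}_F=\VROT_F r_F$: since $\DIV_F(\VROT_F r_F)=0$ the volume term vanishes, and then I would rewrite $\VROT_F r_F\cdot\normal_{FE}=-(r_F)_{|E}'$ using \eqref{eq:vrotF.dE}, integrate by parts on each edge via \eqref{eq:ipp:div.grad.E}, and cancel the vertex contributions exactly as in \eqref{eq:cancel.qV} (valid since $q_{\partial F}$ and $r_F$ are continuous on $\partial F$). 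Recalling $(\cGpF q_{\partial F})_{|E}=(q_{\partial F})_{|E}'$ from \eqref{eq:GE.k} identifies the outcome with the required right-hand side.

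Having verified both \eqref{eq:trFt:Roly.k} and \eqref{eq:trFt:Roly.k.perp} for $\cGF\underline{q}_F$, uniqueness of the potential yields \eqref{eq:trFt:GF}. I do not expect a genuine obstacle: the computation is essentially the edge integration by parts plus vertex cancellation already performed in \eqref{eq:cancel.qV}, and the only conceptual point to keep in mind is that \eqref{eq:trFt:Roly.k} (through the isomorphism \eqref{eq:iso:VROTF.GRAD}, so that $\VROT_F$ maps $\Poly{0,k+1}(F)$ onto all of $\Roly{k}(F)$) pins down the \emph{entire} $\Roly{k}(F)$-component of the potential, which is precisely what allows the recovery of the full $\Rproj{k}{F}\cGF\underline{q}_F$ rather than just $\GRF\underline{q}_F$.
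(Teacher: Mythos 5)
Your proof is correct and takes essentially the same route as the paper: both arguments verify that $\cGF\underline{q}_F$ satisfies the two defining relations \eqref{eq:trFt:Roly.k} and \eqref{eq:trFt:Roly.k.perp} of $\trFt(\uGF\underline{q}_F)$, using the inclusion \eqref{eq:Image.uGF.subset.Ker.CF} to cancel $\cCF(\uGF\underline{q}_F)$ and the edge integration by parts with vertex cancellation already carried out in \eqref{eq:exact.CF.middle}, before concluding via the decomposition $\Poly{k}(F)^2=\Roly{k}(F)\oplus\Roly{k}(F)^\perp$. The only difference is cosmetic: you spell out the computation of \eqref{eq:exact.CF.middle} rather than citing it.
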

\begin{proof}
  For all $r_F\in\Poly{k+1}(F)$, writing \eqref{eq:trFt:Roly.k} for $\uvec{v}_F=\uGF\underline{q}_F$, it is inferred that
  \[
  \int_F\trFt(\uGF\underline{q}_F)\cdot\VROT_F r_F
  = \int_F\cancel{\cCF(\uGF\underline{q}_F)}~r_F + \sum_{E\in\EF}\omega_{FE}\int_E\cGpF\underline{q}_F~r_F
  = \int_F\cGF\underline{q}_F\cdot\VROT_F r_F,
  \]  
  where we have used the inclusion \eqref{eq:Image.uGF.subset.Ker.CF} in the cancellation, while the conclusion follows proceeding as in \eqref{eq:exact.CF.middle}.
  This implies $\Rproj{k}{F}(\trFt(\uGF\underline{q}_F))=\Rproj{k}{F}(\cGF\underline{q}_F)$.
  On the other hand, \eqref{eq:trFt:Roly.k.perp} also applied to $\uvec{v}_F=\uGF\underline{q}_F$ implies $\ROproj{k}{F}(\trFt(\uGF\underline{q}_F))=\GROF\underline{q}_F=\ROproj{k}{F}(\cGF\underline{q}_F)$.
  Combining these relations with the orthogonal decomposition $\Poly{k}(F)^2 = \Roly{k}(F) \oplus \Roly{k}(F)^\perp$, \eqref{eq:trFt:GF} follows.
\end{proof}

\subsection{Two-dimensional discrete $L^2$-products}\label{sec:2d.sequence:L2.products}

We next define discrete counterparts of the $L^2$-products in $H^1(F)$ and $\Hrot{F}$.
The discrete $L^2$-products are composed of consistent and stabilising terms.
The former correspond to the $L^2$-product of the full potential reconstructions, whereas the latter penalise in a least square sense high-order differences between the potential reconstruction and the discrete unknowns. The design of these high-order differences is inspired by the stabilisation terms in HHO methods, see \cite[Section 2.1.4]{Di-Pietro.Droniou:20}.
Specifically, we define
\begin{itemize}
\item $(\cdot,\cdot)_{\GRAD,F}:\uHgrad[F]\times\uHgrad[F]\to\Real$ such that, for all $\underline{q}_F,\underline{r}_F\in\uHgrad[F]$,
  \begin{multline}\label{eq:().grad.F}
    (\underline{q}_F,\underline{r}_F)_{\GRAD,F}
    \coloneq
    \int_F\trF\underline{q}_F~\trF\underline{r}_F + \int_F \delta_{\GRAD,F}^{k-1}\underline{q}_F~ \delta_{\GRAD,F}^{k-1}\underline{r}_F
    \\
    + \sum_{E\in\EF} h_F\int_{E}\delta_{\GRAD,\partial F}^{k+1}\underline{q}_F~\delta_{\GRAD,\partial F}^{k+1}\underline{r}_F,
  \end{multline}
  where $h_F$ denotes the diameter of $F$ and we have set, for any $\underline{q}_F\in\uHgrad[F]$,
  \[
  (\delta_{\GRAD,F}^{k-1}\underline{q}_F,\delta_{\GRAD,\partial F}^{k+1}\underline{q}_F)
  \coloneq
  \uIgrad[F](\trF\underline{q}_F)-\underline{q}_F.
  \]
\item $(\cdot,\cdot)_{\ROT,F}:\uHrot\times\uHrot\to\Real$ such that, for all $\uvec{v}_F,\uvec{w}_F\in\uHrot$,
  \begin{multline}\label{eq:().rot.F}
    (\uvec{v}_F,\uvec{w}_F)_{\ROT,F}
    \coloneq
    \int_F\trFt\uvec{v}_F\cdot\trFt\uvec{w}_F+\int_F \bvec{\delta}_{\ROT,F}^{k-1}\uvec{v}_F\cdot\bvec{\delta}_{\ROT,F}^{k-1}\uvec{w}_F
    \\
    +\int_F \bvec{\delta}_{\ROT,F}^{\perp,k}\uvec{v}_F\cdot\bvec{\delta}_{\ROT,F}^{\perp,k}\uvec{w}_F+ \sum_{E\in\EF}h_F\int_{E}\delta_{\ROT,E}^k\uvec{v}_F~\delta_{\ROT,E}^k\uvec{w}_F,
  \end{multline}
  where we have set, for any $\uvec{v}_F\in\uHrot$,
  \[
  (\bvec{\delta}_{\ROT,F}^{k-1}\uvec{v}_F+\bvec{\delta}_{\ROT,F}^{\perp,k}\uvec{v}_F,(\delta_{\ROT,E}^k\uvec{v}_F)_{E\in\EF}) \coloneq \uIrot(\trFt\uvec{v}_F)- \uvec{v}_F.
  \]
\end{itemize}
The bilinear forms $(\cdot,\cdot)_{\GRAD,F}$ and $(\cdot,\cdot)_{\ROT,F}$ are obviously symmetric and positive semi-definite.
Using arguments similar to the ones deployed in the three-dimensional case (cf. Lemma \ref{lem:L2.products:3d} below), it can be proved that they are actually positive definite, hence they define proper inner products on $\uHgrad[F]$ and $\uHrot$, respectively.
By \eqref{eq:trF:polynomial.consistency} and \eqref{eq:trFt:polynomial.consistency}, they also enjoy the following consistency properties:
\begin{alignat*}{2}
  (\uIgrad[F]q,\uIgrad[F]r)_{\GRAD,F} &= (q,r)_{L^2(F)} &\qquad&\forall q,r\in\Poly{k+1}(F),\\
  (\uIrot\bvec{v},\uIrot\bvec{w})_{\ROT,F} &= (\bvec{v},\bvec{w})_{L^2(F)^2}&\qquad&\forall\bvec{v},\bvec{w}\in\Poly{k}(F)^2.
\end{alignat*}


\section{An exact three-dimensional sequence}\label{sec:3d.sequence}

In this section we define a discrete counterpart of the following exact three-dimensional sequence on a polyhedron $T$ (which may be thought of as a mesh element):
\[
\begin{tikzcd}
  \Real\arrow{r}{i_T} & H^1(T)\arrow{r}{\GRAD} & \Hcurl{T}\arrow{r}{\CURL} & \Hdiv{T}\arrow{r}{\DIV} & L^2(T)\arrow{r}{0} & \{0\},
\end{tikzcd}
\]
where $i_T$ is the operator that maps a real value to a constant function over $T$, $H^1(T)$ denotes the space of functions that are square integrable over $T$ along with their derivatives, $\Hcurl{T}\coloneq\left\{\bvec{v}\in L^2(T)^3\st\CURL\bvec{v}\in L^2(T)^3\right\}$, and $\Hdiv{T}\coloneq\left\{\bvec{v}\in L^2(T)^3\st\DIV\bvec{v}\in L^2(T)\right\}$.
The principle is, as in two dimensions, to start from reconstructions of vector operators in full polynomial spaces, and to project them on restricted domains/co-domains to form an exact sequence. For the sake of conciseness, and since it is similar to the two-dimensional case, we do not detail the initial analysis (i.e., the derivation of an almost-exact sequence and the equivalent of Proposition \ref{prop:almost.exact}), but directly provide appropriate choices of spaces and discrete operators.

\subsection{Three-dimensional discrete spaces and interpolators}\label{sec:3d.sequence:spaces}

\subsubsection[Discrete H1(T) space]{Discrete $H^1(T)$ space}

The discrete counterpart of $H^1(T)$ is
\begin{equation}\label{eq:uHgrad.T}
  \uHgrad\coloneq\Poly{k-1}(T)\times \Poly{k-1}(\FT)\times \Poly[c]{k+1}(\partial^2 T).
\end{equation}
A generic vector $\underline{q}_T\in\uHgrad$ is written 
\[
\underline{q}_T=(q_T,q_{\partial T},q_{\partial^2 T})\,,\text{ with $q_T\in\Poly{k-1}(T)$, $q_{\partial T}\in\Poly{k-1}(\FT)$ and $q_{\partial^2 T}\in\Poly[c]{k+1}(\partial^2 T)$}.
\]
For any $F\in\FT$ we let $q_F\coloneq (q_{\partial T})_{|F}$ and, for any $E\in\ET$, $q_E\coloneq(q_{\partial^2 T})_{|E}$.
The interpolator associated with this space is $\uIgrad:C^0(\overline{T})\to\uHgrad$ such that, for all $q\in C^0(\overline{T})$,
\begin{equation}\label{eq:uIgradT}
  \begin{gathered}
    \uIgrad q \coloneq \underline{q}_T=(q_T,q_{\partial T},q_{\partial^2 T})\in\uHgrad
    \text{ with
      $q_T=\lproj{k-1}{T} q$, $q_F=\lproj{k-1}{F}q_{|F}$ for all $F\in\FT$,}
    \\
    \text{$\lproj{k-1}{E} q_E=\lproj{k-1}{E} q_{|E}$ for all $E\in\ET$,
      and $q_{\partial^2 T}(\bvec{x}_V)=q(\bvec{x}_V)$ for all $V\in\VT$.
    }
  \end{gathered}
\end{equation}
Arguments similar to the two-dimensional case show that the component $q_{\partial^2 T}$ is well-defined by the conditions in the second line of \eqref{eq:uIgradT}.

We denote by $\uHgrad[\partial T]$ the restriction of $\uHgrad$ to $\partial T$, and the corresponding interpolator, with obvious definition, is denoted by $\uIgrad[\partial T]$.
Similarly, {$\uHgrad[\partial^2 T]$} is the restriction of $\uHgrad$ to $\partial^2 T$.
The restriction of $\underline{q}_T=(q_T,q_{\partial T},q_{\partial^2 T})\in\uHgrad$ to $\uHgrad[\partial T]$ is $\underline{q}_{\partial T}\coloneq(q_{\partial T},q_{\partial^2 T})$.
\begin{remark}[Relation with two-dimensional spaces]\label{rem:uHgrad.2D.3D}
  The restriction of an element $\underline{q}_T\in\uHgrad$ to a face $F\in\FT$ defines an element $\underline{q}_F\coloneq(q_F,(q_{\partial^2T})_{|\partial F})\in\uHgrad[F]$. Conversely, gluing together a family $(\underline{q}_F)_{F\in\FT}$ with $\underline{q}_F=(q_F,q_{\partial F})\in\uHgrad[F]$ for all $F\in\FT$ defines an element of $\uHgrad[\partial T]$ provided that the edge values coincide: For any $F_1,F_2$ faces of $T$ sharing an edge $E$, $(q_{\partial F_1})_{|E}=(q_{\partial F_2})_{|E}$.
\end{remark}

\subsubsection[Discrete H(curl;T) space]{Discrete $\Hcurl{T}$ space}

The role of the space $\Hcurl{T}$ is played, at the discrete level, by
\begin{equation}\label{eq:uHcurl.T}
  \uHcurl\coloneq \left(\Roly{k-1}(T)\oplus\Roly{k}(T)^\perp\right)\times \left(\bigtimes_{F\in\FT}\left[\Roly{k-1}(F)\oplus\Roly{k}(F)^\perp\right]\right)\times \Poly{k}(\ET).
\end{equation}
A generic vector $\uvec{v}_T\in\uHcurl$ is denoted by
\[
\begin{gathered}
  \uvec{v}_T=\big(
  \bvec{v}_T=\bvec{v}_{\cvec{R},T}+\bvec{v}_{\cvec{R},T}^\perp,(\bvec{v}_F=\bvec{v}_{\cvec{R},F}+\bvec{v}_{\cvec{R},F}^\perp)_{F\in\FT},v_{\partial^2 T}
  \big)
  \\
  \text{with $(\bvec{v}_{\cvec{R},T},\bvec{v}_{\cvec{R},T}^\perp)\in\Roly{k-1}(T)\times\Roly{k}(T)^\perp$,}
  \\
  \text{
    $(\bvec{v}_{\cvec{R},F},\bvec{v}_{\cvec{R},F}^\perp)\in\Roly{k-1}(F)\times\Roly{k}(F)^\perp$ for all $F\in\FT$, and
    $v_{\partial^2 T}\in\Poly{k}(\ET)$.
  }
\end{gathered}
\]
The interpolator $\uIcurl:C^0(\overline{T})^3\to\uHcurl$ is such that, for all $\bvec{v}\in C^0(\overline{T})^3$,
\begin{multline}\label{eq:uIcurlT}
  \uIcurl\bvec{v}
  \coloneq \\
  \big(
  \Rproj{k-1}{T}\bvec{v} + \ROproj{k}{T}\bvec{v},
  \big(
  \Rproj{k-1}{F}(\normal_F{\times}(\bvec{v}_{|F}{\times}\normal_F)) + \ROproj{k}{F}(\normal_F{\times}(\bvec{v}_{|F}{\times}\normal_F)
  \big)_{F\in\FT}, 
  \big(\lproj{k}{E}(\bvec{v}\cdot\tangent_E)\big)_{E\in\ET}
  \big).
\end{multline}
We remind the reader that $\normal_F\times(\bvec{v}_{|F}\times\normal_F)$ is the orthogonal projection of $\bvec{v}$ on the plane spanned by $F$.
The restriction of $\uHcurl$ to the boundary of $T$ is
\begin{multline*}
  \uHcurl[\partial T]\coloneq
  \Big\{
  \uvec{v}_{\partial T}\coloneq\big(
  (\bvec{v}_F=\bvec{v}_{\cvec{R},F}+\bvec{v}_{\cvec{R},F}^\perp)_{F\in\FT}, v_{\partial^2 T}
  \big)\st
  \\
  \text{
    $(\bvec{v}_{\cvec{R},F},\bvec{v}_{\cvec{R},F}^\perp)\in\Roly{k-1}(F)\times\Roly{k}(F)^\perp$ for all $F\in\FT$ and
    $v_{\partial^2T}\in\Poly{k}(\ET)$
  }
  \Big\}.
\end{multline*}

\begin{remark}[Relation with two-dimensional spaces]\label{rem:uHcurl.2D.3D}
  The restriction of an element $\uvec{v}_T\in\uHcurl$ to a face $F\in\FT$ defines an element $\uvec{v}_F\coloneq(\bvec{v}_F,(v_{\partial^2T})_{|\partial F})\in\uHrot$.
  Conversely, gluing together a family $(\uvec{v}_F)_{F\in\FT}$ with $\uvec{v}_F=(\bvec{v}_F,v_{\partial F})\in\uHrot$ for all $F\in\FT$ defines an element of $\uHcurl[\partial T]$ provided that the edge values coincide: For any $F_1,F_2$ faces of $T$ sharing an edge $E$, $(v_{\partial F_1})_{|E}=(v_{\partial F_2})_{|E}$.
\end{remark}

\subsubsection[Discrete H(div;T) space]{Discrete $\Hdiv{T}$ space}

Finally, the discrete counterpart of the space $\Hdiv{T}$ is
\begin{equation}\label{eq:uHdiv.T}
  \uHdiv\coloneq \left(\Goly{k-1}(T)\oplus\Goly{k}(T)^\perp\right)\times\Poly{k}(\FT),
\end{equation}
with a generic vector $\uvec{v}_T\in\uHdiv$ decomposed as
\[
\text{
  $\uvec{v}_T\coloneq(\bvec{v}_T=\bvec{v}_{\cvec{G},T}+\bvec{v}_{\cvec{G},T}^\perp, v_{\partial T})$
  with $(\bvec{v}_{\cvec{G},T},\bvec{v}_{\cvec{G},T}^\perp)\in\Goly{k-1}(T)\times\Goly{k}(T)^\perp$ and $v_{\partial T}\in\Poly{k}(\FT)$.
}
\]
The interpolator $\uIdiv:C^0(\overline{T})^3\to\uHdiv$ is such that, for all $\bvec{v}\in C^0(\overline{T})^3$,
\begin{equation}\label{eq:uIdivT}
  \uIdiv\bvec{v}\coloneq\big(
  \Gproj{k-1}{T}\bvec{v}+\GOproj{k}{T}\bvec{v}, 
  \big(\lproj{k}{F}(\bvec{v}\cdot\normal_F)\big)_{F\in\FT}
  \big).
\end{equation}

\subsection{Three-dimensional vector operators reconstructions}

\subsubsection{Gradient}

The three-dimensional full gradient operator $\cGT:\uHgrad\to\Poly{k}(T)^3$ is defined such that, for all $\underline{q}_T\in\uHgrad$:
\begin{equation}\label{eq:GT.k}
  \int_T\cGT\underline{q}_T\cdot\bvec{v}_T
  = -\int_T q_T\DIV\bvec{v}_T
  + \sum_{F\in\FT}\omega_{TF}\int_F\trF\underline{q}_{\partial T}(\bvec{v}_T\cdot\normal_F)
  \qquad\forall\bvec{v}_T\in\Poly{k}(T)^3,
\end{equation}
where $(\trF:\uHgrad[\partial T]\to\Poly{k+1}(F))_{F\in\FT}$ is a family of trace reconstruction operators such that, for all $F\in\FT$, $\trF$ only depends on the unknowns on $F$ and satisfies the polynomial consistency and projection properties \eqref{eq:trF:prop}.

The discrete gradient operator $\uGT:\uHgrad\to\uHcurl$ is defined by projecting $\cGT$ onto $\Roly{k-1}(T)\oplus\Roly{k}(T)^\perp$, and by completing with face and edge components obtained from the two-dimensional discrete gradient operators.
Specifically, for all $\underline{q}_T\in\uHgrad$, we let
\begin{equation}\label{eq:def.uGT}
  \begin{gathered}
    \uGT\underline{q}_T\coloneq \big(
    \GRT\underline{q}_T+\GROT\underline{q}_T,(\GRF\underline{q}_T+\GROF\underline{q}_T)_{F\in\FT},(\cGE\underline{q}_T)_{E\in\ET}
    \big)\,,\\
    \text{
      with $\GRT\coloneq \Rproj{k-1}{T}\cGT$, $\GROT\coloneq\ROproj{k}{T}\cGT$,
    }
    \\
    \text{
      and $\GRF$, $\GROF$, and $\cGE$ formally defined by \eqref{eq:def.GF.GOF} and \eqref{eq:GE.k}, respectively.
    }
  \end{gathered}
\end{equation}
Accounting for the fact that the face and edge gradients depend only on boundary unknowns, with a little abuse of notation we will use the same symbols for the operators obtained by restricting their domain: $\GRF:\uHgrad[\partial T]\to\Roly{k-1}(F)$, $\GROF:\uHgrad[\partial T]\to\Roly{k}(F)^\perp$, $\cGE:\uHgrad[\partial T]\to\Poly{k}(E)$, and $\cGE:\uHgrad[\partial^2 T]\to\Poly{k}(E)$.
\subsubsection{Curl}

The full curl operator $\cCT:\uHcurl\to\Poly{k}(T)^3$ is defined such that, for all $\uvec{v}_T\in\uHcurl$,
\begin{equation}\label{eq:CT.k}
  \int_T\cCT\uvec{v}_T\cdot\bvec{w}_T
  = \int_T\bvec{v}_T\cdot\CURL\bvec{w}_T
  + \sum_{F\in\FT}\omega_{TF}\int_F\trFt\uvec{v}_{\partial T}\cdot(\bvec{w}_T\times\normal_F)
  \qquad\forall\bvec{w}_T\in\Poly{k}(T)^3,
\end{equation}
where $(\trFt:\uHcurl[\partial T]\to\Poly{k}(F)^2)_{F\in\FT}$ is the family of tangential trace reconstruction operators such that, for all $F\in\FT$, $\trFt$ is formally defined as in Section \ref{sec:trFt} (see, in particular, \eqref{eq:trFt}).
The discrete curl operator $\uCT:\uHcurl\to\uHdiv$ is obtained projecting $\cCT$ onto $\Goly{k-1}(T)\oplus\Goly{k}(T)^\perp$ and completing using the face curl operators:
For all $\uvec{v}_T\in\uHcurl$,
\begin{equation}\label{eq:def.uCT}
  \begin{gathered}
    \uCT\uvec{v}_T \coloneq \big(\CGT\uvec{v}_T+\CGOT\uvec{v}_T,(\cCF\uvec{v}_T)_{F\in\FT}\big)
    \text{ with $\CGT\coloneq\Gproj{k-1}{T}\cCT$, $\CGOT\coloneq \GOproj{k}{T}\cCT$,}\\
    \text{ and $\cCF:\uHcurl\to\Poly{k}(F)$ formally defined by \eqref{eq:cCF}.
    }
  \end{gathered}
\end{equation}

\subsubsection{Divergence}

The discrete and full divergence operators are both equal to $\cDT:\uHdiv\to\Poly{k}(T)$ defined such that, for all $\uvec{v}_T\in\uHdiv$,
\begin{equation}\label{eq:DT}
  \int_T\cDT\uvec{v}_T~ q_T
  = -\int_T\bvec{v}_T\cdot\GRAD q_T
  + \sum_{F\in\FT}\omega_{TF}\int_F v_F q_T
  \qquad\forall q_T\in\Poly{k}(T).
\end{equation}

\subsection{Exactness of the three-dimensional sequence}

The goal of this section is to prove the following exactness result.

\begin{theorem}[Exact three-dimensional sequence]\label{th:exact.3D}
  The following sequence is exact:
  \begin{equation}\label{eq:sequence.3D}
    \begin{tikzcd}
      \Real\arrow{r}{\uIgrad} & \uHgrad\arrow{r}{\uGT} & \uHcurl\arrow{r}{\uCT} & \uHdiv\arrow{r}{\cDT} & \Poly{k}(T)\arrow{r}{0} & \{0\}.
    \end{tikzcd}
  \end{equation}
\end{theorem}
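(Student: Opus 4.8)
The plan is to establish the four exactness relations
\[
  \uIgrad\Real = \Ker\uGT, \quad \Image\uGT = \Ker\uCT, \quad \Image\uCT = \Ker\cDT, \quad \Image\cDT = \Poly{k}(T),
\]
leaning throughout on the two-dimensional exactness of Theorem \ref{th:exact.2D} applied face-by-face, on the integration-by-parts and surface formulas of Section \ref{sec:basic.tools}, on the isomorphisms \eqref{eq:iso:DIV}--\eqref{eq:iso:CURL}, and, repeatedly, on the orientation identity \eqref{eq:orientation}. I would first dispatch the two endpoint relations. For $\uIgrad\Real = \Ker\uGT$ the inclusion $\subset$ is consistency, while for $\supset$ I would restrict a kernel element to each face $F$: its restriction lies in $\Ker\uGF$, so two-dimensional exactness forces $\underline{q}_F = \uIgrad[F]C_F$; continuity of the edge unknowns and connectedness of $\partial^2 T$ make all the $C_F$ equal to a single constant $C$, and then the isomorphism $\DIV\colon\Roly{k}(T)^\perp\to\Poly{k-1}(T)$ applied to the relation defining $\GROT$ pins down $q_T=\lproj{k-1}{T}C$. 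For $\Image\cDT=\Poly{k}(T)$ I would prove the commutation $\cDT(\uIdiv\bvec{v})=\lproj{k}{T}(\DIV\bvec{v})$ from the definition \eqref{eq:DT} and the volume formula \eqref{eq:ipp:div.grad.T}, and conclude by surjectivity of $\DIV\colon\Poly{k+1}(T)^3\to\Poly{k}(T)$ and consistency of $\lproj{k}{T}$.

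Next come the two complex inclusions $\Image\uGT\subset\Ker\uCT$ and $\Image\uCT\subset\Ker\cDT$. On the face components these follow directly from the two-dimensional identity $\cCF\circ\uGF=0$ and from surjectivity of $\cCF$, since the restriction of $\uGT\underline{q}_T$ (resp. of $\uvec{v}_T$) to a face $F$ coincides with $\uGF\underline{q}_F$ (resp. $\uvec{v}_F$). For the interior components I would insert $\bvec{w}_T=\CURL\bvec{z}$ (resp. $\bvec{w}_T=\GRAD s$) into \eqref{eq:CT.k} and \eqref{eq:DT}; using $\DIV\CURL=0$, the projection property \eqref{eq:trF:projection}, Proposition \ref{prop:trFt:consistency.2}, and the surface identities \eqref{eq:gradT.rotF}--\eqref{eq:curlT.divF.rotF}, every volume term telescopes into edge sums of the shape $\sum_{F\in\FT}\omega_{TF}\sum_{E\in\EF}\omega_{FE}\int_E(\cdots)$, each of which vanishes edge-by-edge by \eqref{eq:orientation} and single-valuedness of the edge unknowns.

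The substance lies in the converse inclusions $\Ker\uCT\subset\Image\uGT$ and $\Ker\cDT\subset\Image\uCT$, proved by explicit reconstruction of a potential. For the former, given $\uvec{v}_T\in\Ker\uCT$ I would first build the boundary unknowns: since $\cCF\uvec{v}_F=0$ on each face, testing \eqref{eq:cCF} against constants yields the circulation condition $\sum_{E\in\EF}\omega_{FE}\int_E v_{\partial F}=0$, and as the face boundaries span the cycle space of the $1$-skeleton $\partial^2 T$, a single continuous primitive $q_{\partial^2 T}\in\Poly[c]{k+1}(\partial^2 T)$ realising $\cGE\underline{q}_T=v_{\partial^2 T}$ exists. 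Once $q_{\partial^2 T}$ is fixed, \eqref{eq:GF} reproduces $\GRF\underline{q}_F=\bvec{v}_{\cvec{R},F}$ automatically, the isomorphism $\DIV_F\colon\Roly{k}(F)^\perp\to\Poly{k-1}(F)$ fixes $q_F$ so that $\GROF\underline{q}_F=\bvec{v}_{\cvec{R},F}^\perp$, and $\DIV\colon\Roly{k}(T)^\perp\to\Poly{k-1}(T)$ fixes $q_T$ so that $\GROT\underline{q}_T=\bvec{v}_{\cvec{R},T}^\perp$. The one remaining compatibility $\GRT\underline{q}_T=\bvec{v}_{\cvec{R},T}$ is reduced, by inserting $\bvec{w}_T=\CURL\bvec{z}$ with $\bvec{z}\in\Goly{k}(T)^\perp$ and using $\CGOT\uvec{v}_T=0$, to an edge sum killed by \eqref{eq:orientation}. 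The inclusion $\Ker\cDT\subset\Image\uCT$ follows the same template one degree higher: surjectivity of $\cCF$ matches the prescribed face curls $w_F$, the component $\bvec{v}_{\cvec{R},T}$ is selected through $\CURL\colon\Goly{k}(T)^\perp\to\Roly{k-1}(T)$ to hit $\CGOT\uvec{v}_T=\bvec{w}_{\cvec{G},T}^\perp$, and the leftover compatibility $\CGT\uvec{v}_T=\bvec{w}_{\cvec{G},T}$ is exactly $\cDT\uvec{w}_T=0$ tested against $\GRAD s$, again combined with \eqref{eq:orientation}.

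I expect the main obstacle to be the gluing of the face data into single-valued edge (and, for the curl inclusion, face) unknowns on the closed surface $\partial T$. This is a discrete cohomological problem, and its solvability rests on the topology of $\partial T$: because $T$ is simply connected with connected Lipschitz boundary, $\partial T$ is a topological sphere, so the only global obstruction is the top-degree one, controlled respectively by the per-face circulation conditions and by the single scalar condition $\sum_{F\in\FT}\omega_{TF}\int_F w_F=0$ carried by $\cDT\uvec{w}_T=0$. Making this gluing rigorous — spending the available kernel freedom on each face consistently around every vertex — is the delicate point; once it is in place, everything else reduces, through Theorem \ref{th:exact.2D} and the isomorphisms \eqref{eq:iso:DIV}--\eqref{eq:iso:CURL}, to the orientation cancellation \eqref{eq:orientation}.
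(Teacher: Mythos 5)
Your overall architecture coincides with the paper's: the same four relations, the two forward inclusions handled by consistency, commutation, and edge-sum cancellations via \eqref{eq:orientation}, and the two converse inclusions by explicit reconstruction of potentials through the isomorphisms \eqref{eq:iso:DIV}--\eqref{eq:iso:CURL} applied component by component. The endpoint relations, the inclusion $\Image\uGT\subset\Ker\uCT$, and the reconstruction of $(q_{\partial^2 T},q_{\partial T},q_T)$ from an element of $\Ker\uCT$ all match the paper's proof step for step (the volume--face links you invoke are packaged there as Lemmas \ref{lem:properties.gradient} and \ref{lem:prop.CT}).

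The genuine gap is the step you yourself call ``the delicate point'' and then leave unresolved: in the proof of $\Ker\cDT\subset\Image\uCT$, the gluing of the per-face preimages under $\cCF$ into a single element of $\uHcurl[\partial T]$ with single-valued edge unknowns. Asserting that the only global obstruction is the top-degree one is the right heuristic, not a proof. Two-dimensional exactness gives, for each $F\in\FT$, some $\uvec{w}_F\in\uHrot$ with $\cCF\uvec{w}_F=v_F$, but nothing forces the edge components of $\uvec{w}_{F_1}$ and $\uvec{w}_{F_2}$ to agree on a shared edge, and the freedom available to fix this is exactly $\Ker\cCF=\Image\uGF$, i.e.\ correctors whose edge traces are of the form $\cGpF q_{\partial F}$ and are therefore constrained by the zero-circulation condition of Proposition \ref{prop:surj.GbdryF}. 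The paper closes this in Lemma \ref{lem:surj.CurlF}: matching the edge values reduces to choosing one constant $r_E$ per edge subject to one linear constraint per face (the system \eqref{eq:moy.rE.2}); the kernel of the transpose of that system is identified, via \eqref{eq:orientation} and the connectedness of $\partial T$, with the constant vectors, so there is a single solvability condition, which is then computed to be exactly $\sum_{F\in\FT}\omega_{TF}\int_F v_F=0$, i.e.\ \eqref{eq:comp.surj.curl.dT}, supplied by $\cDT\uvec{v}_T=0$. Until you carry out this argument (or an equivalent rigorous discrete-cohomology computation on the sphere $\partial T$), the inclusion $\Ker\cDT\subset\Image\uCT$ is not established. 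The analogous gluing of the $q_{\partial F}$'s into $q_{\partial^2 T}\in\Poly[c]{k+1}(\partial^2 T)$ in the curl-kernel step is easier, since only additive constants need matching, but it too deserves an explicit justification from the connectedness and simple connectedness of $\partial T$ rather than the one-line appeal to the cycle space of the $1$-skeleton.
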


\begin{remark}[Comparison with Finite Elements]
  When $T$ is a tetrahedron or a hexahedron, the sequence \eqref{eq:sequence.3D} can be compared with the usual FE sequences.
  The corresponding number of discrete unknowns is reported in Table \ref{tab:comparison.3D}.
  Similar considerations as for the two-dimensional case hold; see Remark \ref{rem:comparison.2D}.
\end{remark}

\begin{table}\centering
  \begin{small}
    \begin{tabular}{c|c|c|c|cc|cc|cc}
      \toprule
      & \multirow{2}{*}{$k$} & \multirow{2}{*}{$V\in\VT$} & \multirow{2}{*}{$E\in\ET$} & \multicolumn{2}{c|}{$F\in\FT$} & \multicolumn{2}{c|}{$T$} & \multicolumn{2}{c}{Total}
      \\
      & & & & Tetra & Hexa & Tetra & Hexa & Tetra & Hexa
      \\\midrule
      \multirow{4}{*}{$\uHgrad$} 
      & 0 & \cellcolor{black!10}{1} & \cellcolor{black!10}{0} & 0 (0) & 0 (0) & 0 (0) & 0 (0) & 4 (4) & 8 (8)
      \\
      & 1 & \cellcolor{black!10}{1} & \cellcolor{black!10}{1} & 1 (0) & 1 (1) & 1 (0) & 1 (1) & 15 (10) & 27 (27)
      \\
      & 2 & \cellcolor{black!10}{1} & \cellcolor{black!10}{2} & 3 (1) & 3 (4) & 4 (0) & 4 (8) & 32 (20) & 54 (64)
      \\
      & 3 & \cellcolor{black!10}{1} & \cellcolor{black!10}{3} & 6 (3) & 6 (9) & 10 (1) & 10 (27) & 56 (35) & 90 (125)
      \\\midrule
      \multirow{4}{*}{$\uHcurl$}
      & 0 & & \cellcolor{black!10}{1} & 0 (0) & 0 (0) & 0 (0) & 0 (0) & 6 (6) & 12 (12)
      \\
      & 1 & & \cellcolor{black!10}{2} & 3 (2) & 3 (4) & 4 (0) & 4 (6) & 28 (20) & 46 (54)
      \\
      & 2 & & \cellcolor{black!10}{3} & 8 (6) & 8 (12) & 15 (3) & 15 (36) & 65 (45) & 99 (144)
      \\
      & 3 & & \cellcolor{black!10}{4} & 15 (12) & 15 (24) & 36 (12) & 36 (72) & 120 (84) & 174 (264)
      \\\midrule
      \multirow{4}{*}{$\uHdiv$}
      & 0 & & & \cellcolor{black!10}{1} & 1 (1) & 0 (0) & 0 (0) & 4 (4) & 6 (6)
      \\
      & 1 & & & \cellcolor{black!10}{3} & 3 (4) & 6 (3) & 6 (12) & 18 (15) & 24 (36)
      \\
      & 2 & & & \cellcolor{black!10}{6} & 6 (9) & 20 (12) & 20 (54) & 44 (36) & 56 (108)
      \\
      & 3 & & & \cellcolor{black!10}{10} & 10 (16) & 45 (30) & 45 (144) & 85 (70) & 105 (240)
      \\\midrule
      \multirow{4}{*}{$\Poly{k}(T)$}
      & 0 & & & & & \cellcolor{black!10}{1} & 1 (1) & \cellcolor{black!10}{1} & 1 (1) 
      \\
      & 1 & & & & & \cellcolor{black!10}{4} & 4 (8) & \cellcolor{black!10}{4} & 4 (8)
      \\
      & 2 & & & & & \cellcolor{black!10}{10} & 10 (27) & \cellcolor{black!10}{10} & 10 (27)
      \\
      & 3 & & & & & \cellcolor{black!10}{20} & 20 (64) &  \cellcolor{black!10}{20} & 20 (64)
      \\\bottomrule
    \end{tabular}
  \end{small}
  \caption{Number of discrete unknowns attached to each geometric entity for the three-dimensional sequence \eqref{eq:sequence.3D} on a tetrahedron or hexahedron $T$ for $k\in\{0,\ldots,3\}$.
    For comparison, we also report in parentheses the number of DOFs of the spaces in the corresponding FE sequence when the latter are different.
    Gray cells highlight the cases for which, irrespectively of the degree $k$, the number of DOFs are the same for the three-dimensional sequence \eqref{eq:sequence.3D} and for the FE sequences.
    \label{tab:comparison.3D}}  
\end{table}

\subsubsection{Preliminary results}

We establish first a few properties on the discrete operators that will be useful during the proof of Theorem \ref{th:exact.3D}. We start by noticing the following relations:
\begin{alignat}{2}
  \label{eq:cancel.edges}
  &\sum_{F\in\FT}\sum_{E\in\EF}\omega_{TF}\omega_{FE}a_E
  =0&\qquad&\forall (a_E)_{E\in\ET}\in\Real^{\ET},\\
  \label{eq:righthanded}
  &(\bvec{z}\times \normal_F)\cdot\normal_{FE}
  =\bvec{z}\cdot\tangent_E&\qquad&\forall \bvec{z}\in\Real^3\,,\;\forall F\in\FT\,,\;\forall E\in\EF.
\end{alignat}
Relation \eqref{eq:cancel.edges} follows from the fact that, after rearranging the sum over the edges, each $a_E$ appears in factor of the quantity in the left-hand side of \eqref{eq:orientation}. Equation \eqref{eq:righthanded} is a direct consequence of $(\bvec{z}\times \normal_F)\cdot\normal_{FE}=-\bvec{z}\cdot(\normal_F\times\normal_{FE})$ together with the fact that $(\tangent_E,\normal_{FE},\normal_F)$ is a right-handed system of coordinates.


\begin{lemma}[Properties of the gradient operator]\label{lem:properties.gradient}
  It holds:
  \begin{equation}
    \label{eq:GT:polynomial.consistency}
    \cGT(\uIgrad q) = \GRAD q \qquad\forall q\in\Poly{k+1}(T),
  \end{equation}
  and
  \begin{multline}\label{eq:link.GT.GF}
    \int_T\GRT\underline{q}_T\cdot\CURL\bvec{w}_T
    = \int_T\cGT\underline{q}_T\cdot\CURL\bvec{w}_T
    = -\sum_{F\in\FT}\omega_{TF}\int_F \cGF\underline{q}_{F}\cdot(\bvec{w}_T\times\normal_F)
    \\
    \forall\underline{q}_T\in\uHgrad\,,\;\forall\bvec{w}_T\in\Poly{k}(T)^3.
  \end{multline}
\end{lemma}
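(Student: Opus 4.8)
The plan is to establish the two claims in sequence. The first, the polynomial consistency \eqref{eq:GT:polynomial.consistency}, should follow by the same mechanism as its two-dimensional analogue \eqref{eq:GF.k.cons}. Fixing $q\in\Poly{k+1}(T)$ and $\underline{q}_T=\uIgrad q$, I would test the defining relation \eqref{eq:GT.k} against an arbitrary $\bvec{v}_T\in\Poly{k}(T)^3$. On the volume term I would remove $\lproj{k-1}{T}$ using that $\DIV\bvec{v}_T\in\Poly{k-1}(T)$; on the boundary terms I would invoke the polynomial consistency property \eqref{eq:trF:polynomial.consistency} of the trace reconstruction, $\trF(\uIgrad[\partial T]q)=q_{|F}$, to replace $\trF\underline{q}_{\partial T}$ by the trace of $q$. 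The integration by parts formula \eqref{eq:ipp:div.grad.T} then identifies the right-hand side with $\int_T\GRAD q\cdot\bvec{v}_T$, and since both $\cGT(\uIgrad q)$ and $\GRAD q$ lie in $\Poly{k}(T)^3$, the equality of the polynomials follows.

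For the second claim \eqref{eq:link.GT.GF}, the \emph{first} equality is immediate: since $\GRT=\Rproj{k-1}{T}\cGT$ and $\CURL\bvec{w}_T\in\CURL\Poly{k}(T)=\Roly{k-1}(T)$ for $\bvec{w}_T\in\Poly{k}(T)^3$, the defining property of the $L^2$-projector $\Rproj{k-1}{T}$ gives $\int_T\GRT\underline{q}_T\cdot\CURL\bvec{w}_T=\int_T\cGT\underline{q}_T\cdot\CURL\bvec{w}_T$. For the \emph{second} equality, I would take $\bvec{v}_T=\CURL\bvec{w}_T$ in the definition \eqref{eq:GT.k}. The volume term $-\int_T q_T\DIV(\CURL\bvec{w}_T)$ vanishes because $\DIV\CURL=0$, leaving only the boundary sum $\sum_{F\in\FT}\omega_{TF}\int_F\trF\underline{q}_{\partial T}\,((\CURL\bvec{w}_T)\cdot\normal_F)$. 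Using the surface identity \eqref{eq:curlT.divF.rotF}, namely $(\CURL\bvec{w}_T)_{|F}\cdot\normal_F=\ROT_F(\normal_F\times(\bvec{w}_{T|F}\times\normal_F))$, each face integral becomes a two-dimensional curl paired against the scalar trace reconstruction.

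At this point the strategy is to reduce the whole expression to the face-gradient operators $\cGF$ by running the two-dimensional integration-by-parts machinery on each face. Concretely, I expect to apply the formula \eqref{eq:ipp:rot.F} for $\ROT_F$ on each $F$, which produces an interior term of the form $\int_F(\normal_F\times(\bvec{w}_{T|F}\times\normal_F))\cdot\VROT_F(\trF\underline{q}_{\partial T})$ together with edge contributions $-\sum_{E\in\EF}\omega_{FE}\int_E(\cdots\cdot\tangent_E)\,\trF\underline{q}_{\partial T}$. The interior term should, via $\VROT_F(\trF\underline{q}_{\partial T})=\cGF\underline{q}_F$ and the tangential-trace identities, be recast as $\int_F\cGF\underline{q}_F\cdot(\bvec{w}_T\times\normal_F)$ (recall from \eqref{eq:ipp:curl.T} that $\bvec{w}_T\times\normal_F$ is exactly the in-plane projection tested against), which after restoring $\omega_{TF}$ gives precisely the claimed right-hand side. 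The role of \eqref{eq:gradT.rotF}, relating $\GRAD$ restricted to $F$ and $\VROT_F$, together with the consistency of $\trF$ and $\cGF$ on polynomial traces, should close the gap between the tangential gradient of the reconstructed scalar trace and the reconstructed face gradient $\cGF\underline{q}_F$.

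The main obstacle I anticipate is bookkeeping on the edges rather than anything conceptual. The edge terms arising from \eqref{eq:ipp:rot.F} on neighbouring faces must cancel, and this is where the orientation relation \eqref{eq:orientation} (equivalently its summed form \eqref{eq:cancel.edges}) together with the right-handedness identity \eqref{eq:righthanded}, $(\bvec{z}\times\normal_F)\cdot\normal_{FE}=\bvec{z}\cdot\tangent_E$, must be deployed carefully so that the single-valued edge trace $q_{\partial^2 T}$ (shared by the two faces meeting at $E$) is multiplied by $\omega_{TF_1}\omega_{F_1E}+\omega_{TF_2}\omega_{F_2E}=0$. The delicate point is ensuring that the \emph{same} scalar trace $\trF\underline{q}_{\partial T}$ restricts consistently to a common edge value across adjacent faces — this is guaranteed by the projection property \eqref{eq:trF:projection} and the edge-matching built into $\Poly[c]{k+1}(\partial^2 T)$ (cf.\ Remark \ref{rem:uHgrad.2D.3D}) — so that the edge contributions genuinely telescope to zero and only the face-gradient terms survive.
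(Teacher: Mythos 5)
Your treatment of \eqref{eq:GT:polynomial.consistency} and of the first equality in \eqref{eq:link.GT.GF} matches the paper's proof. The plan for the second equality of \eqref{eq:link.GT.GF}, however, has a genuine gap. After reducing to $\sum_{F\in\FT}\omega_{TF}\int_F\trF\underline{q}_{\partial T}\,(\CURL\bvec{w}_T\cdot\normal_F)$, you propose to integrate by parts on each face with $r_F=\trF\underline{q}_{\partial T}$ and then (i) identify the resulting interior term with $\int_F\cGF\underline{q}_F\cdot(\bvec{w}_T\times\normal_F)$ via ``$\VROT_F(\trF\underline{q}_{\partial T})=\cGF\underline{q}_F$'', and (ii) cancel the edge terms using single-valuedness of $\trF\underline{q}_{\partial T}$ across adjacent faces. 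Neither step is justified by the assumed properties \eqref{eq:trF:prop} of the scalar trace reconstruction: $\trF$ is only required to be polynomially consistent and to satisfy $\lproj{k-1}{F}(\trF\underline{q}_F)=q_F$, so in general $\GRAD_F(\trF\underline{q}_{\partial T})\neq\cGF\underline{q}_F$ (the reconstructed gradient $\cGF$ is defined directly by \eqref{eq:GF.k}, not as the gradient of any reconstructed potential), and the edge trace of $\trF\underline{q}_{\partial T}$ has no reason to coincide with $q_E$ or with the edge trace of the reconstruction on the neighbouring face; the projection property controls interior moments against $\Poly{k-1}(F)$, not boundary values, and Remark \ref{rem:uHgrad.2D.3D} concerns the discrete unknowns, not the reconstructed traces. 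Your telescoping of the edge contributions therefore does not go through.

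The paper's proof circumvents both issues with one observation you did not use: since $\bvec{v}_T=\CURL\bvec{w}_T\in\Poly{k-1}(T)^3$, the factor $\CURL\bvec{w}_T\cdot\normal_F$ lies in $\Poly{k-1}(F)$, so one may insert $\lproj{k-1}{F}$ in front of $\trF\underline{q}_{\partial T}$ in each boundary integral and invoke precisely the projection property \eqref{eq:trF:projection} to replace $\lproj{k-1}{F}(\trF\underline{q}_{\partial T})$ by the face unknown $q_F$. One then writes $\CURL\bvec{w}_T\cdot\normal_F=\DIV_F(\bvec{w}_{T|F}\times\normal_F)$ by \eqref{eq:curlT.divF.rotF} and applies the \emph{discrete} definition \eqref{eq:GF.k} of $\cGF$ with test function $\bvec{w}_F=\bvec{w}_{T|F}\times\normal_F$: this produces $-\int_F\cGF\underline{q}_F\cdot(\bvec{w}_T\times\normal_F)$ plus edge terms involving the genuinely single-valued edge unknowns $q_E$, which cancel by \eqref{eq:righthanded} and \eqref{eq:cancel.edges}. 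If you amend your plan to route the argument through $q_F$ and the definition of $\cGF$ rather than through the trace reconstruction itself, the proof closes.
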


\begin{remark}[Properties \eqref{eq:GT:polynomial.consistency}--\eqref{eq:link.GT.GF}]
  The relation \eqref{eq:GT:polynomial.consistency} states a consistency property on the full gradient reconstruction, 
  while \eqref{eq:link.GT.GF} provides a link between volume and face gradients.
\end{remark}

\begin{proof}[Proof of Lemma \ref{lem:properties.gradient}]
  \underline{1. Proof of \eqref{eq:GT:polynomial.consistency}}.
  Let $q\in\Poly{k+1}(T)$ and apply the definition \eqref{eq:GT.k} of $\cGT$ to $\underline{q}_T\coloneq\uIgrad q$. Using the definition \eqref{eq:uIgradT} of $\uIgrad$ together with the consistency \eqref{eq:trF:polynomial.consistency} of each $\trF$, this gives, for all $\bvec{v}_T\in\Poly{k}(T)^3$,
  \[
  \int_T \cGT(\uIgrad q)\cdot\bvec{v}_T = -\int_T \lproj{k-1}{T}q~\DIV\bvec{v}_T
  + \sum_{F\in\FT}\omega_{TF}\int_F q(\bvec{v}_T\cdot\normal_F)\\
  =\int_T \GRAD q\cdot\bvec{v}_T,
  \]
  where the second equality is obtained removing the projector $\lproj{k-1}{T}$ (since $\DIV\bvec{v}_T\in\Poly{k-1}(T)$) and using the integration by parts formula \eqref{eq:ipp:div.grad.T}.
  Since both $\cGT(\uIgrad q)$ and $\GRAD q$ belong to $\Poly{k}(T)^3$, this relation establishes \eqref{eq:GT:polynomial.consistency}.
  \medskip\\
  \underline{2. Proof of \eqref{eq:link.GT.GF}}.
  The first equality is a straightforward consequence of $\GRT=\Rproj{k-1}{T}\cGT$ (see \eqref{eq:def.uGT}) and of $\CURL\bvec{w}_T\in\Roly{k-1}(T)$. To prove the second equality, apply the definition \eqref{eq:GT.k} of $\cGT$ to $\bvec{v}_T=\CURL\bvec{w}_T\in\Poly{k-1}(T)^3$ and introduce, using its definition, the projector $\lproj{k-1}{F}$ in each boundary integral to get
  \[
  \int_T\cGT\underline{q}_T\cdot\CURL\bvec{w}_T=-\int_T q_T~\cancel{\DIV\CURL \bvec{w}_T}+\sum_{F\in\FT}
  \omega_{TF}\int_F \lproj{k-1}{F}(\trF\underline{q}_{\partial T})(\CURL \bvec{w}_T\cdot\normal_F).
  \]
  Using the projection property \eqref{eq:trF:projection} of $\trF$ and the identity \eqref{eq:curlT.divF.rotF}, we infer
  \begin{multline}\label{eq:link.GT.GF.0}
    \int_T\cGT\underline{q}_T\cdot\CURL\bvec{w}_T
    =\sum_{F\in\FT}
    \omega_{TF}\int_F q_F \DIV_F(\bvec{w}_{T|F}\times\normal_F)
    \\
    =
    -\sum_{F\in\FT}\omega_{TF}\int_F\cGF\underline{q}_F\cdot(\bvec{w}_{T|F}\times\normal_F)
    +\sum_{F\in\FT}\sum_{E\in\EF}\omega_{TF}\omega_{FE}\int_E q_E(\bvec{w}_{T|F}\times\normal_F)\cdot\normal_{FE},
  \end{multline}
  where the second equality follows applying the definition \eqref{eq:GF.k} of $\cGF$ to $\bvec{w}_F \coloneq\bvec{w}_{T|F}\times\normal_F$. Using \eqref{eq:righthanded} we have $(\bvec{w}_{T|F}\times\normal_F)\cdot\normal_{FE}=\bvec{w}_{T}\cdot\tangent_E$ on each $E\in\ET$, and the double sum in \eqref{eq:link.GT.GF.0} therefore vanishes owing to \eqref{eq:cancel.edges} with $a_E=\int_Eq_E(\bvec{w}_T\cdot\tangent_E)$. The proof of the second equality in \eqref{eq:link.GT.GF} is complete.
\end{proof}


The following N\'ed\'elec space, in which $\overline{\bvec{x}}_T\coloneq\frac{1}{|T|}\int_T\bvec{x}\ud\bvec{x}$ denotes the centroid of $T$, will be useful to formulate a commutation property for $\cCT$:
\begin{equation}\label{def:nedelec}
  \NE{k}(T) \coloneq \Poly{k}(T)^3 + (\bvec{x}-\overline{\bvec{x}}_T)\times\Poly{k}(T)^3.
\end{equation}

\begin{lemma}[Properties of the curl operator]\label{lem:prop.CT}
  It holds:
  \begin{alignat}{2}\label{eq:CT.k:polynomial.consistency}
    \cCT(\uIcurl\bvec{v})&=\CURL\bvec{v}&\qquad&\forall\bvec{v}\in\NE{k}(T),\\
    \label{eq:rep.CGT}
    \int_T\CGT\uvec{v}_T\cdot\bvec{w}_T
    &=\sum_{F\in\FT}\omega_{TF}\int_F\bvec{v}_F\cdot(\bvec{w}_T\times\normal_F)
    &\qquad&\forall\uvec{v}_T\in\uHcurl\,,\;\forall\bvec{w}_T\in\Goly{k-1}(T),\\
    \label{eq:CT.k.CF.k}
    \int_T\cCT\uvec{v}_T\cdot\GRAD r_T
    &=  \sum_{F\in\FT}\omega_{TF}\int_F\cCF\uvec{v}_{\partial T}~r_T
    &\qquad&\forall \uvec{v}_T\in\uHcurl\,,\;\forall r_T\in\Poly{k+1}(T),\\
    \label{eq:CT.CF}
    \int_T \CGT\uvec{v}_T\cdot\GRAD r_T
    &=  \sum_{F\in\FT}\omega_{TF}\int_F\cCF\uvec{v}_{\partial T}~r_T
    &\qquad&\forall \uvec{v}_T\in\uHcurl\,,\;\forall r_T\in\Poly{k}(T).
  \end{alignat}
\end{lemma}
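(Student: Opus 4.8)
The plan is to establish the four identities in turn, treating the polynomial consistency \eqref{eq:CT.k:polynomial.consistency} first since it carries the only genuine structural content, and then obtaining the three algebraic relations \eqref{eq:rep.CGT}--\eqref{eq:CT.CF} by testing the definition \eqref{eq:CT.k} of $\cCT$ against well-chosen gradients. A preliminary observation that feeds all of \eqref{eq:rep.CGT}--\eqref{eq:CT.CF} is the edge-free identity: combining the definition \eqref{eq:cCF} of $\cCF$ with \eqref{eq:trFt:Roly.k} (extended to all $r_F\in\Poly{k+1}(F)$ via Remark \ref{rem:valid.eq:trFt:Roly.k}), the boundary sums cancel and one gets, for each $F\in\FT$ and all $r_F\in\Poly{k+1}(F)$, $\int_F\trFt\uvec{v}_F\cdot\VROT_F r_F=\int_F\bvec{v}_{\cvec{R},F}\cdot\VROT_F r_F=\int_F\bvec{v}_F\cdot\VROT_F r_F$, the last step because $\bvec{v}_{\cvec{R},F}^\perp\in\Roly{k}(F)^\perp$ is orthogonal to $\VROT_F r_F\in\Roly{k-1}(F)$.

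For \eqref{eq:CT.k:polynomial.consistency} I would first record that $\NE{k}(T)$ is built precisely so that $\CURL\bvec{v}\in\Poly{k}(T)^3$ (a short computation on the generator $(\bvec{x}-\overline{\bvec{x}}_T)\times\Poly{k}(T)^3$), and that the tangential component $\bvec{v}_{{\rm t},F}\coloneq\normal_F\times(\bvec{v}_{|F}\times\normal_F)$ lies in $\Poly{k}(F)^2$ with $\ROT_F\bvec{v}_{{\rm t},F}=(\CURL\bvec{v})_{|F}\cdot\normal_F\in\Poly{k}(F)$ and $\bvec{v}_{{\rm t},F}\cdot\tangent_E=\bvec{v}\cdot\tangent_E\in\Poly{k}(E)$ on each edge. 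These are exactly the hypotheses of Proposition \ref{prop:trFt:consistency.1}, and comparing \eqref{eq:uIcurlT} with \eqref{eq:uIrot} shows that the $F$-restriction of $\uIcurl\bvec{v}$ equals $\uIrot\bvec{v}_{{\rm t},F}$; hence \eqref{eq:trFt:polynomial.consistency} gives $\trFt$ of that restriction $=\vlproj{k}{F}\bvec{v}_{{\rm t},F}=\bvec{v}_{{\rm t},F}$. Plugging $\uvec{v}_T=\uIcurl\bvec{v}$ into \eqref{eq:CT.k}, the volume term reduces to $\int_T\bvec{v}\cdot\CURL\bvec{w}_T$ since $\CURL\bvec{w}_T\in\Roly{k-1}(T)$ annihilates $\ROproj{k}{T}\bvec{v}$ and pairs with $\Rproj{k-1}{T}\bvec{v}$ as with $\bvec{v}$, while the boundary term becomes $\sum_{F\in\FT}\omega_{TF}\int_F\bvec{v}\cdot(\bvec{w}_T\times\normal_F)$ because $\bvec{w}_T\times\normal_F$ is tangent to $F$. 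The integration by parts formula \eqref{eq:ipp:curl.T} then identifies the result with $\int_T\CURL\bvec{v}\cdot\bvec{w}_T$; since $\CURL\bvec{v}\in\Poly{k}(T)^3$, this proves \eqref{eq:CT.k:polynomial.consistency}.

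For \eqref{eq:rep.CGT} I would use $\CGT=\Gproj{k-1}{T}\cCT$ to replace $\CGT$ by $\cCT$ in the pairing with $\bvec{w}_T\in\Goly{k-1}(T)$, write $\bvec{w}_T=\GRAD s_T$ with $s_T\in\Poly{k}(T)$ so that $\CURL\bvec{w}_T=\bvec{0}$ kills the volume term in \eqref{eq:CT.k} and $\bvec{w}_T\times\normal_F=\VROT_F s_{T|F}$ by \eqref{eq:gradT.rotF}, and then conclude by the edge-free identity above. For \eqref{eq:CT.k.CF.k} I would instead take $\bvec{w}_T=\GRAD r_T$ with $r_T\in\Poly{k+1}(T)$ directly in \eqref{eq:CT.k} (again $\CURL\GRAD r_T=\bvec{0}$ and $\GRAD r_T\times\normal_F=\VROT_F r_{T|F}$); rewriting each face term this time via \eqref{eq:trFt:Roly.k} produces $\sum_{F\in\FT}\omega_{TF}\int_F\cCF\uvec{v}_{\partial T}\,r_T$ plus a double edge sum $\sum_{F\in\FT}\sum_{E\in\EF}\omega_{TF}\omega_{FE}\int_E v_{\partial F}\,r_T$ that vanishes by \eqref{eq:cancel.edges} (with $a_E=\int_E v_{\partial^2T}\,r_T$, single-valued since edge unknowns and $r_T$ are shared across faces). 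Finally \eqref{eq:CT.CF} is immediate from \eqref{eq:CT.k.CF.k}: for $r_T\in\Poly{k}(T)$ one has $\GRAD r_T\in\Goly{k-1}(T)$, so $\int_T\CGT\uvec{v}_T\cdot\GRAD r_T=\int_T\cCT\uvec{v}_T\cdot\GRAD r_T$ by definition of $\Gproj{k-1}{T}$.

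The main obstacle is the bookkeeping in \eqref{eq:CT.k:polynomial.consistency}: one must be confident that $\NE{k}(T)$ has tangential traces in $\Poly{k}(F)^2$ with $\ROT_F$ in $\Poly{k}(F)$, so that Proposition \ref{prop:trFt:consistency.1} genuinely applies, and must correctly match the three components of $\uIcurl\bvec{v}$ on a face against those of $\uIrot\bvec{v}_{{\rm t},F}$ (in particular the edge identity $\bvec{v}\cdot\tangent_E=\bvec{v}_{{\rm t},F}\cdot\tangent_E$). The remaining three identities are routine once the edge-free identity and the orientation cancellation \eqref{eq:cancel.edges} are in place.
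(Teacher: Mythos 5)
Your route is the same as the paper's for all four identities: \eqref{eq:CT.k:polynomial.consistency} via Proposition \ref{prop:trFt:consistency.1} and the face-by-face identification of $\uIcurl\bvec{v}$ with $\uIrot(\normal_F\times(\bvec{v}_{|F}\times\normal_F))$, and \eqref{eq:rep.CGT}--\eqref{eq:CT.CF} by testing \eqref{eq:CT.k} against gradients together with \eqref{eq:gradT.rotF}, \eqref{eq:trFt:Roly.k}, \eqref{eq:cCF} and \eqref{eq:cancel.edges}; your ``edge-free identity'' is exactly the computation the paper carries out inline when proving \eqref{eq:rep.CGT}, and the remaining three identities are handled as in the paper.

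There is, however, one false claim in your argument for \eqref{eq:CT.k:polynomial.consistency}: for $\bvec{v}\in\NE{k}(T)$ the tangential component $\bvec{v}_{{\rm t},F}=\normal_F\times(\bvec{v}_{|F}\times\normal_F)$ does \emph{not} lie in $\Poly{k}(F)^2$ in general, only in $\Poly{k+1}(F)^2$ (already for $k=0$: the tangential trace of $(\bvec{x}-\overline{\bvec{x}}_T)\times\bvec{z}$ with $\bvec{z}$ a constant vector is affine and non-constant on a generic face). Hence $\vlproj{k}{F}\bvec{v}_{{\rm t},F}\neq\bvec{v}_{{\rm t},F}$ in general, and Proposition \ref{prop:trFt:consistency.1} only yields $\trFt\uvec{v}_{\partial T}=\vlproj{k}{F}\bvec{v}_{{\rm t},F}$ --- indeed, the whole reason that proposition is phrased in terms of $\ROT_F\bvec{v}\in\Poly{k}(F)$ and $\bvec{v}\cdot\tangent_E\in\Poly{k}(E)$ rather than $\bvec{v}\in\Poly{k}(F)^2$ is to cover precisely this situation. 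The damage is local: in the boundary term of \eqref{eq:CT.k} the projector $\vlproj{k}{F}$ is removed not because it acts as the identity, but because it is paired with $\bvec{w}_T\times\normal_F\in\Poly{k}(F)^2$ and one invokes \eqref{eq:lproj}; with that one-line repair your computation coincides with the paper's. Note also that the two hypotheses you assert when invoking Proposition \ref{prop:trFt:consistency.1} do need checking: $\ROT_F\bvec{v}_{{\rm t},F}\in\Poly{k}(F)$ is immediate from $\bvec{v}_{{\rm t},F}\in\Poly{k+1}(F)^2$, but $\bvec{v}\cdot\tangent_E\in\Poly{k}(E)$ is where the paper does actual work, writing $\bvec{x}-\overline{\bvec{x}}_T=(\bvec{x}-\overline{\bvec{x}}_E)+(\overline{\bvec{x}}_E-\overline{\bvec{x}}_T)$ and using that the first summand is parallel to $\tangent_E$ on $E$ to kill the degree-$(k+1)$ contribution.
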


\begin{remark}[Properties \eqref{eq:CT.k:polynomial.consistency}--\eqref{eq:CT.CF}]
  Equation \eqref{eq:CT.k:polynomial.consistency} states a polynomial consistency property for $\cCT$, 
  \eqref{eq:rep.CGT} is a characterisation of $\CGT$, and \eqref{eq:CT.k.CF.k}--\eqref{eq:CT.CF} provide links between volume and face curls.
\end{remark}

\begin{proof}[Proof of Lemma \ref{lem:prop.CT}]
  \underline{1. Proof of \eqref{eq:CT.k:polynomial.consistency}}.
  Let $\bvec{v}\in \NE{k}(T)$ and set $\uvec{v}_T\coloneq\uIcurl\bvec{v}$.
  The function $\bvec{v}$ belongs to $\Poly{k+1}(T)^3$ and thus $\normal_F \times (\bvec{v}_{|F}\times \normal_F)\in\Poly{k+1}(F)^2$ for all $F\in\FT$. As a consequence, $\ROT_F (\normal_F \times (\bvec{v}_{|F}\times \normal_F))\in\Poly{k}(F)$.
  Write now $\bvec{v}=\bvec{w}+(\bvec{x}-\overline{\bvec{x}}_T)\times \bvec{z}$ with $\bvec{w},\bvec{z}\in\Poly{k}(T)^3$, take $E\in\EF$, and denote by $\overline{\bvec{x}}_E$ the midpoint of $E$. For all $\bvec{x}\in E$, we have
  \begin{align*}
    (\normal_F \times (\bvec{v}(\bvec{x})\times \normal_F))\cdot\tangent_E=\bvec{v}(\bvec{x})\cdot\tangent_E
    ={}&\bvec{w}(\bvec{x})\cdot\tangent_E + \left( (\bvec{x}-\overline{\bvec{x}}_T)\times \bvec{z}(\bvec{x}) \right)\cdot\tangent_E\\
    ={}&\bvec{w}(\bvec{x})\cdot\tangent_E + \cancel{\left( (\bvec{x}-\overline{\bvec{x}}_E)\times \bvec{z}(\bvec{x}) \right)\cdot\tangent_E} + \left( (\overline{\bvec{x}}_E-\overline{\bvec{x}}_T)\times\bvec{z}(\bvec{x}) \right)\cdot\tangent_E,
  \end{align*}
  the first equality coming from the fact that $\normal_F \times (\bvec{v}(\bvec{x})\times \normal_F)$ is the projection of $\bvec{v}(\bvec{x})$ on the plane spanned by $F$, and the cancellation in the second line being justified by the fact that $\bvec{x}-\overline{\bvec{x}}_E$ is parallel to $\tangent_E$. Since both $\bvec{w}$ and $\bvec{z}$ are polynomials of degree $\le k$, this proves that $(\normal_F \times (\bvec{v}_{|E}\times \normal_F))\cdot\tangent_E \in\Poly{k}(E)$.
  We have thus shown that $\normal_F \times (\bvec{v}_{|F}\times \normal_F)$ satisfies the assumptions in Proposition \ref{prop:trFt:consistency.1} and, noticing that $\uvec{v}_{\partial T}$ is given on each face $F\in\FT$ by $\uIrot[F](\normal_F \times (\bvec{v}_{|F}\times \normal_F))$, we infer that
  \begin{equation}\label{eq:comut.cCT.0}
    \trFt\uvec{v}_{\partial T}=\vlproj{k}{F}(\normal_F\times(\bvec{v}_{|F}\times\normal_F))\qquad\forall F\in\FT.
  \end{equation}
  By definition of $\uIcurl$, we have $\bvec{v}_T=\Rproj{k-1}{T}\bvec{v} + \ROproj{k}{T}\bvec{v}$, and thus, for any $\bvec{w}_T\in\Poly{k}(T)^3$,
  \[
  \int_T \bvec{v}_T\cdot\CURL\bvec{w}_T=\int_T(\Rproj{k-1}{T}\bvec{v} + \ROproj{k}{T}\bvec{v})\cdot\CURL\bvec{w}_T=
  \int_T \bvec{v}\cdot\CURL\bvec{w}_T,
  \]
  where we have removed $\Rproj{k-1}{T}$ using its definition, and $\ROproj{k}{T}$ using its $L^2$-orthogonality to $\CURL\bvec{w}_T\in\Roly{k-1}(T)\subset\Roly{k}(T)$.
    Hence, applying the definition \eqref{eq:CT.k} of $\cCT$ and using \eqref{eq:comut.cCT.0}, we obtain, for all $\bvec{w}_T\in\Poly{k}(T)^3$,
  \begin{align*}
    \int_T\cCT(\uIcurl \bvec{v})\cdot\bvec{w}_T={}&\int_T\bvec{v}\cdot\CURL\bvec{w}_T
    +\sum_{F\in\FT}\omega_{TF}\int_F\vlproj{k}{F}\left(\normal_F\times(\bvec{v}_{|F}\times\normal_F)\right)\cdot(\bvec{w}_T\times\normal_F)\\
    ={}&\int_T\bvec{v}\cdot\CURL\bvec{w}_T    
    +\sum_{F\in\FT}\omega_{TF}\int_F\left(\normal_F\times(\bvec{v}_{|F}\times\normal_F)\right)\cdot(\bvec{w}_T\times\normal_F)
    \\
    ={}&\int_T \CURL\bvec{v}\cdot\bvec{w}_T,
  \end{align*}
  where the second line follows from $\bvec{w}_{T|F}\times\normal_F\in\Poly{k}(F)^2$, and the conclusion is a consequence of the integration by parts formula \eqref{eq:ipp:curl.T}. Since both $\cCT(\uIcurl \bvec{v})$ and $\CURL \bvec{v}$ belong to $\Poly{k}(T)^3$, this concludes the proof of \eqref{eq:CT.k:polynomial.consistency}.
  \medskip\\
  \underline{2. Proof of \eqref{eq:rep.CGT}}. Recalling that $\CGT=\Gproj{k-1}{T}\cCT$ (see \eqref{eq:def.uCT}) and using the definition \eqref{eq:CT.k} of $\cCT$ we infer, for all $\bvec{w}_T\in\Goly{k-1}(T)$,
  \[
  \int_T\CGT\uvec{v}_T\cdot\bvec{w}_T=  \int_T\cCT\uvec{v}_T\cdot\bvec{w}_T=\int_T \bvec{v}_T\cdot\cancel{\CURL\bvec{w}_T}+\sum_{F\in\FT}\omega_{TF}\int_F \trFt\uvec{v}_{\partial T}\cdot(\bvec{w}_T\times\normal_F),
  \]
  the cancellation coming from the vector calculus identity $\CURL\GRAD=\bvec{0}$.
  Let now $r_T\in\Poly{k}(T)$ be such that $\bvec{w}_T=\GRAD r_T$. Then the identity \eqref{eq:gradT.rotF} yields $\bvec{w}_T\times\normal_F=\VROT_F r_{T|F}$ and thus
  \begin{align*}
    \int_T\CGT\uvec{v}_T\cdot\bvec{w}_T={}& \sum_{F\in\FT}\omega_{TF}\int_F \trFt\uvec{v}_{\partial T}\cdot\VROT_F r_{T|F}\\
    ={}& \sum_{F\in\FT}\omega_{TF}\left(\int_F \cCF\uvec{v}_{\partial T}~r_{T|F}
    +\sum_{E\in\EF}\omega_{FE}\int_E v_{\partial^2 T}~r_{T|F}\right)\\
    ={}&\sum_{F\in\FT}\omega_{TF}\int_F \bvec{v}_F\cdot\VROT_F r_{T|F},
  \end{align*}
  where the second line follows from the definition \eqref{eq:trFt:Roly.k} of $\trFt$ applied to $r_F=r_{T|F}\in\Poly{k}(F)\subset\Poly{k+1}(F)$ (see also Remark \ref{rem:valid.eq:trFt:Roly.k}) and the third line is a consequence of the definition \eqref{eq:cCF} of $\cCF$ with the same $r_F$ (together with the definition $\bvec{v}_F=\bvec{v}_{\cvec{R},F}+\bvec{v}_{\cvec{R},F}^\perp$ and the $L^2$-orthogonality of $\bvec{v}_{\cvec{R},F}^\perp$ and $\VROT_F r_F$). The relation \eqref{eq:rep.CGT} follows by recalling that $\VROT_F r_{T|F}=\bvec{w}_T\times\normal_F$.
  \medskip\\
  \underline{3. Proof of \eqref{eq:CT.k.CF.k}--\eqref{eq:CT.CF}}. Let $\uvec{v}_T\in\uHcurl$ and $r_T\in\Poly{k+1}(T)$.
  Writing the definition \eqref{eq:CT.k} of $\cCT$ with $\bvec{w}_T=\GRAD r_T$, observing that $\CURL\GRAD r_T=\bvec{0}$, and using the identity \eqref{eq:gradT.rotF}, we infer
  \[
  \int_T\cCT\uvec{v}_T\cdot\GRAD r_T
  = \sum_{F\in\FT}\omega_{TF}\int_F\trFt\uvec{v}_{\partial T}\cdot\VROT_F r_{T|F}.
  \]
  We continue using, for all $F\in\FT$, the definition \eqref{eq:trFt:Roly.k} of the tangential trace reconstruction with $r_F=r_{T|F}\in\Poly{k+1}(F)$ (see also Remark \ref{rem:valid.eq:trFt:Roly.k}) to write
  \[
  \int_T\cCT\uvec{v}_T\cdot\GRAD r_T
  = \sum_{F\in\FT}\omega_{TF}\int_F\cCF\uvec{v}_{\partial T} r_T
  + \cancel{\sum_{F\in\FT}\omega_{TF}\sum_{E\in\EF}\omega_{FE}\int_Ev_Er_T},
  \]
  where, to cancel the rightmost term, we have used \eqref{eq:cancel.edges} with $a_E=\int_E v_E r_T$.
  This proves \eqref{eq:CT.k.CF.k}. The relation \eqref{eq:CT.CF} is obtained applying \eqref{eq:CT.k.CF.k} with $r_T\in\Poly{k}(T)$ and using $\CGT=\Gproj{k-1}{T}\cCT$.
\end{proof}

\begin{lemma}[Surjectivity of the boundary curl]\label{lem:surj.CurlF}
  Let $v_{\partial T}=(v_F)_{F\in\FT}\in\Poly{k}(\FT)$ be such that 
  \begin{equation}\label{eq:comp.surj.curl.dT}
    \sum_{F\in\FT}\omega_{TF}\int_{F}v_{F}=0.
  \end{equation}
  Then, there exists $\uvec{z}_{\partial T}\in \uHcurl[\partial T]$ such that $\cCF\uvec{z}_{\partial T}=v_F$ for all $F\in\FT$.
\end{lemma}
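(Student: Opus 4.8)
The plan is to reduce the construction of $\uvec{z}_{\partial T}$ to two decoupled problems, obtained by testing the defining relation \eqref{eq:cCF} of $\cCF$ against constants and against zero-average polynomials. Throughout, I use that, by \eqref{eq:cCF}, the face curl $\cCF\uvec{z}_{\partial T}$ depends only on the component $\bvec{z}_{\cvec{R},F}\in\Roly{k-1}(F)$ of each face unknown and on the single-valued edge function $z_{\partial^2 T}\in\Poly{k}(\ET)$; I therefore set $\bvec{z}_{\cvec{R},F}^\perp=\bvec{0}$ and only have to produce the pairs $(\bvec{z}_{\cvec{R},F})_{F\in\FT}$ and $z_{\partial^2 T}$. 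Writing $\Poly{k}(F)=\Real\oplus\Poly{0,k}(F)$, the requirement $\cCF\uvec{z}_{\partial T}=v_F$ splits into: (a) testing against $r_F=1$, the edge condition $-\sum_{E\in\EF}\omega_{FE}\int_E(z_{\partial^2 T})_{|E}=\int_F v_F$ for every $F\in\FT$; and (b) testing against $r_F\in\Poly{0,k}(F)$, a relation that --- since $\VROT_F:\Poly{0,k}(F)\to\Roly{k-1}(F)$ is an isomorphism by \eqref{eq:iso:VROTF.GRAD} --- uniquely determines $\bvec{z}_{\cvec{R},F}\in\Roly{k-1}(F)$ once $z_{\partial^2 T}$ is known. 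Hence everything hinges on solving the edge problem (a).

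For (a), I first observe that $\Poly{k}(\ET)$ carries no inter-edge continuity constraint, so the edge integrals $I_E\coloneq\int_E(z_{\partial^2 T})_{|E}$ may be prescribed independently (e.g.\ taking $z_{\partial^2 T}$ constant on each edge and realising any prescribed value of $I_E$). Problem (a) thus becomes the finite linear system $\sum_{E\in\EF}\omega_{FE}I_E=-\int_F v_F=:\beta_F$ for the unknowns $(I_E)_{E\in\ET}\in\Real^{\ET}$. Denoting by $\mathcal{A}:\Real^{\ET}\to\Real^{\FT}$ the associated linear map, solvability of this system is equivalent to $(\beta_F)_{F\in\FT}$ being orthogonal to $\Ker\mathcal{A}^\top$.

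The main difficulty is the computation of $\Ker\mathcal{A}^\top=\{(c_F)_{F\in\FT}\st\sum_{F\in\FT\st E\in\EF}\omega_{FE}c_F=0\ \forall E\in\ET\}$. Since $\partial T$ is a (connected, Lipschitz) polyhedral surface, each edge $E$ lies on exactly two faces $F_1,F_2$, so the condition reads $\omega_{F_1E}c_{F_1}+\omega_{F_2E}c_{F_2}=0$; combined with the orientation identity \eqref{eq:orientation} this yields $\omega_{TF_1}c_{F_1}=\omega_{TF_2}c_{F_2}$. By connectedness of $\partial T$ --- hence of the face-adjacency graph --- the quantity $\omega_{TF}c_F$ is then constant over $\FT$, so $\Ker\mathcal{A}^\top$ is the one-dimensional space spanned by $(\omega_{TF})_{F\in\FT}$ (that this vector does lie in the kernel is again \eqref{eq:orientation}; the identity \eqref{eq:cancel.edges} records the same cancellation). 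Consequently, (a) is solvable if and only if $\sum_{F\in\FT}\omega_{TF}\beta_F=0$, i.e.\ $\sum_{F\in\FT}\omega_{TF}\int_F v_F=0$, which is exactly the hypothesis \eqref{eq:comp.surj.curl.dT}.

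To conclude, I fix any $z_{\partial^2 T}\in\Poly{k}(\ET)$ realising a solution $(I_E)_{E\in\ET}$ of (a), determine each $\bvec{z}_{\cvec{R},F}\in\Roly{k-1}(F)$ from (b), set $\bvec{z}_{\cvec{R},F}^\perp=\bvec{0}$, and assemble $\uvec{z}_{\partial T}\coloneq((\bvec{z}_{\cvec{R},F})_{F\in\FT},z_{\partial^2 T})\in\uHcurl[\partial T]$. Testing \eqref{eq:cCF} against constants (handled by (a)) and against $\Poly{0,k}(F)$ (handled by (b)), and using $\Poly{k}(F)=\Real\oplus\Poly{0,k}(F)$, then gives $\cCF\uvec{z}_{\partial T}=v_F$ for every $F\in\FT$, as required.
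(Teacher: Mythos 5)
Your proof is correct, and it takes a genuinely different route from the paper's. The paper first invokes the two-dimensional exactness (Theorem \ref{th:exact.2D}) to produce, face by face, vectors $\uvec{w}_F\in\uHrot$ with $\cCF\uvec{w}_F=v_F$, and then corrects these by elements of $\Ker\cCF=\Image\uGF$ so that the edge unknowns become single-valued; the single-valuedness requirement, combined with the compatibility condition of Proposition \ref{prop:surj.GbdryF}, leads to a linear system on $\Real^{\ET}$ whose solvability is characterised via $\Image A=(\Ker A^t)^\perp$. You instead build $\uvec{z}_{\partial T}$ directly: you split the test space as $\Poly{k}(F)=\Real\oplus\Poly{0,k}(F)$, observe that the constant tests decouple into a purely combinatorial system for the edge integrals $I_E$, and that the zero-average tests then determine each $\bvec{z}_{\cvec{R},F}$ uniquely through the isomorphism $\VROT_F:\Poly{0,k}(F)\to\Roly{k-1}(F)$ and Riesz representation on $\Roly{k-1}(F)$. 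The core linear-algebra step is essentially the same in both arguments --- the kernel of the transposed face--edge incidence map is one-dimensional by \eqref{eq:orientation} and the connectedness of $\partial T$, which converts the solvability condition into exactly \eqref{eq:comp.surj.curl.dT} --- but your version is self-contained and more elementary, bypassing both Theorem \ref{th:exact.2D} and Proposition \ref{prop:surj.GbdryF}, whereas the paper's version highlights how the three-dimensional surjectivity is inherited from the two-dimensional exact sequence. Both are valid; yours arguably isolates the combinatorial obstruction more cleanly, at the price of not reusing the already-established two-dimensional machinery.
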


\begin{remark}
  The condition \eqref{eq:comp.surj.curl.dT} is also necessary for the conclusion of the lemma to hold.
\end{remark}

\begin{proof}[Proof of Lemma \ref{lem:surj.CurlF}]
  By the exactness in two dimensions (cf. Theorem \ref{th:exact.2D}), for each $F\in\FT$ there is $\uvec{w}_F\in\uHrot$ such that $\cCF\uvec{w}_F=v_F$. Following Remark \ref{rem:uHcurl.2D.3D}, a vector $\uvec{z}_{\partial T}\in \uHcurl[\partial T]$ can be defined by gluing together the vectors $(\uvec{w}_F)_{F\in\FT}$ if their edge values coincide. The idea of the proof is to add to each $\uvec{w}_F$ a vector $\uvec{y}_F\in \uHrot$ such that $\cCF(\uvec{w}_F+\uvec{y}_F)=v_F$ and the edge values of $(\uvec{w}_F+\uvec{y}_F)_{F\in\FT}$ coincide; by gluing these vectors together, we obtain $\uvec{z}_{\partial T}\in\uHcurl[\partial T]$ such that $\cCF\uvec{z}_{\partial T}=v_F$ for all $F\in\FT$.
  \smallskip
  
  To ensure the relation $\cCF(\uvec{w}_F+\uvec{y}_F)=v_F$, we have to look for $\uvec{y}_F$ in $\Ker\cCF$, that is, owing to Theorem \ref{th:exact.2D}, $\uvec{y}_F=\uGF \underline{q}_F$ for some $\underline{q}_F=(q_F,q_{\partial F})\in\uHgrad[F]$.
  The other condition on $\uvec{y}_F$ only concerns its boundary values $y_{\partial F}=\cGpF q_{\partial F}$ (which means that the component $q_F$ of $\underline{q}_F$ is irrelevant to our purpose and can be set to $0$), and is written:
  \begin{equation}\label{eq:cont.E}
    \mbox{
      $(\uvec{w}_{F_1})_E + (y_{\partial F_1})_{|E} =(\uvec{w}_{F_2})_E + (y_{\partial F_2})_{|E}$
      for all $E\in\ET$ with $\FE=\{F_1,F_2\}$,
    }
  \end{equation}
  where $\FE$ denotes the set containing the two faces of $T$ that share $E$.
  By Proposition \ref{prop:surj.GbdryF}, in order for $y_{\partial F}\in\Poly{k}(\EF)$ to be represented as $\cGpF q_{\partial F}$, it needs to verify 
  \begin{equation}\label{eq:moy.y}
    \sum_{E\in\EF}\omega_{FE}\int_E y_{\partial F} = 0.
  \end{equation}
  We are therefore reduced to finding, for each $F\in\FT$, $y_{\partial F}\in \Poly{k}(\EF)$ such that \eqref{eq:cont.E} and \eqref{eq:moy.y} hold.
  \smallskip
  
  Let us set, for $E\in\ET$ with $\FE=\{F_1,F_2\}$, $r_E\coloneq \frac12\left((y_{\partial F_1})_{|E}+(y_{\partial F_2})_{|E}\right)$.
  We also set, for $F\in\FT$ and $E\in\EF$, $W_{FE}\coloneq\frac12\left( (\uvec{w}_{F'})_{E}-(\uvec{w}_{F})_E\right)$, where $F'$ is the other face of $T$ that shares $E$ with $F$.
  Then, \eqref{eq:cont.E} is equivalent to
  \begin{equation}\label{eq:y.r}
    (y_{\partial F})_{|E}=r_E + W_{FE}\qquad\forall F\in\FT\,,\;\forall E\in\EF.
  \end{equation}
  Using this expression, we obtain the following equivalent reformulation of \eqref{eq:moy.y}:
  \begin{equation}\label{eq:moy.rE}
    \sum_{E\in\EF}\omega_{FE}\int_E r_E = -\sum_{E\in\EF}\omega_{FE}\int_E W_{FE}.
  \end{equation}
  We thus have to find $(r_E)_{E\in\ET}$ such that each $r_E$ belongs to $\Poly{k}(E)$ and \eqref{eq:moy.rE} holds for all $F\in\FT$. Defining then $(y_{\partial F})_{F\in\FT}$ by \eqref{eq:y.r}, we obtain a family that satisfies \eqref{eq:cont.E} and \eqref{eq:moy.y} for all $F\in\FT$.
  The relation \eqref{eq:moy.rE} only involves the integral of $r_E$ over $E$, and we can therefore limit our search to constant polynomials $r_E\in\Real$, in which case \eqref{eq:moy.rE} is recast, after multiplying by $\omega_{TF}$, as
  \begin{equation}\label{eq:moy.rE.2}
    \omega_{TF}\sum_{E\in\EF}\omega_{FE}|E| r_E = -\omega_{TF}\sum_{E\in\EF}\omega_{FE}\int_E W_{FE}\qquad\forall F\in\FT.
  \end{equation}
  This is a linear system of size $\card(\FT)\times \card(\ET)$ in the unknowns $(r_E)_{E\in\ET}\in\Real^{\ET}$. Denoting by $A$ its matrix, this system has a solution if and only if its right-hand side belongs to $\Image A=(\Ker A^t)^\perp$, with $A^t$ denoting the transpose of $A$ through the standard dot products of $\Real^{\FT}$ and $\Real^{\ET}$.
  It is easy to check that $A^t$ corresponds to the mapping
  \[
  \Real^{\FT}\ni(\xi_F)_{F\in\FT}\mapsto \left(\sum_{F\in\FE}\omega_{TF}\omega_{FE}\xi_F\right)_{E\in\ET}\in\Real^{\ET}.
  \]
  Invoking \eqref{eq:orientation} we see that $(\xi_F)_{F\in\FT}\in \Ker A^t$ if and only if $\xi_{F_1}=\xi_{F_2}$ whenever $F_1,F_2\in\FT$ share a common edge.
  Working from neighbouring face to neighbouring face, and using the connectedness of $\partial T$, this shows that the vectors in $\Ker A^t$ are those with all components equal. Hence, the right-hand side of \eqref{eq:moy.rE.2} belongs to $(\Ker A^t)^\perp$ if and only if it is orthogonal to the vector with all components equal to $1$, which translates into
  \[
  0=-\sum_{F\in\FT}\sum_{E\in\EF}\omega_{TF}\omega_{FE}\int_E W_{FE}.
  \]
  Gathering by edges and using \eqref{eq:orientation}, this is equivalent to
  \begin{align}
    0={}& -\sum_{E\in\ET,\,\FE=\{F_1,F_2\}} \omega_{TF_1}\omega_{F_1E}\int_E (W_{F_1E}-W_{F_2E})\nonumber\\
    ={}& -\sum_{E\in\ET,\,\FE=\{F_1,F_2\}} \omega_{TF_1}\omega_{F_1E}\int_E \frac12\left[
      (\uvec{w}_{F_2})_E - (\uvec{w}_{F_1})_E - (\uvec{w}_{F_1})_E + (\uvec{w}_{F_2})_E
      \right]\nonumber\\
    ={}&\sum_{E\in\ET,\,\FE=\{F_1,F_2\}} \left(\omega_{TF_1}\omega_{F_1E}\int_E (\uvec{w}_{F_1})_E
    +\omega_{TF_2}\omega_{F_2E}\int_E (\uvec{w}_{F_2})_E\right)\nonumber\\
    ={}&\sum_{F\in\FT}\omega_{TF}\sum_{E\in\EF}\omega_{FE}\int_E (\uvec{w}_F)_{E},
    \label{eq:solvability}
  \end{align}
  where we have used the definitions of $W_{F_1E}$ and $W_{F_2E}$ in the second line, invoked again \eqref{eq:orientation} in the third line, and concluded gathering back by faces. Using $r_F=1$ in the definition \eqref{eq:cCF} of $\cCF\uvec{w}_F=v_F$, we see that
  \[
  \sum_{E\in\EF}\omega_{FE}\int_E (\uvec{w}_F)_{E}=-\int_F v_F,
  \]
  and the solvability condition \eqref{eq:solvability} of the system \eqref{eq:moy.rE.2} is thus equivalent to \eqref{eq:comp.surj.curl.dT}. Hence, \eqref{eq:moy.rE.2} has at least one solution $(r_E)_{E\in\ET}$ and the proof is complete. \end{proof}


\begin{lemma}[Commutation property for $\cDT$]
  It holds
  \begin{equation}\label{eq:DT.commutation}
    \cDT(\uIdiv\bvec{v}) = \lproj{k}{T}(\DIV\bvec{v})\qquad \forall\bvec{v}\in H^1(T)^3.
  \end{equation}
\end{lemma}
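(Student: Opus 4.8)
The plan is to test the defining relation \eqref{eq:DT} of $\cDT$ against an arbitrary $q_T\in\Poly{k}(T)$, taking $\uvec{v}_T=\uIdiv\bvec{v}$, and to massage the resulting right-hand side into $\int_T\DIV\bvec{v}~q_T$ by means of the integration by parts formula \eqref{eq:ipp:div.grad.T}. Since both $\cDT(\uIdiv\bvec{v})$ and $\lproj{k}{T}(\DIV\bvec{v})$ lie in $\Poly{k}(T)$, it suffices to show that they have identical $L^2$-products against every $q_T\in\Poly{k}(T)$; the characterisation \eqref{eq:lproj} of $\lproj{k}{T}$ then identifies $\int_T\DIV\bvec{v}~q_T$ with $\int_T\lproj{k}{T}(\DIV\bvec{v})~q_T$, giving the claim.

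First I would substitute the components of $\uIdiv\bvec{v}$ furnished by \eqref{eq:uIdivT} into \eqref{eq:DT}, producing a volume term $-\int_T(\Gproj{k-1}{T}\bvec{v}+\GOproj{k}{T}\bvec{v})\cdot\GRAD q_T$ and a boundary sum $\sum_{F\in\FT}\omega_{TF}\int_F\lproj{k}{F}(\bvec{v}\cdot\normal_F)q_T$. The heart of the computation is to remove all the projectors. In the volume term I would use that $\GRAD q_T\in\Goly{k-1}(T)$ (because $q_T\in\Poly{k}(T)$), together with the inclusion $\Goly{k-1}(T)\subset\Goly{k}(T)$: the first contribution survives the projection $\Gproj{k-1}{T}$, so that $\int_T\Gproj{k-1}{T}\bvec{v}\cdot\GRAD q_T=\int_T\bvec{v}\cdot\GRAD q_T$, while the orthogonal contribution vanishes since $\GOproj{k}{T}\bvec{v}\in\Goly{k}(T)^\perp$ is $L^2$-orthogonal to $\GRAD q_T\in\Goly{k}(T)$. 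In each face term, $q_{T|F}\in\Poly{k}(F)$ is an admissible test function for $\lproj{k}{F}$, so the projector can be dropped via \eqref{eq:lproj}, leaving $\sum_{F\in\FT}\omega_{TF}\int_F(\bvec{v}\cdot\normal_F)q_T$.

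At this stage the right-hand side reads $-\int_T\bvec{v}\cdot\GRAD q_T+\sum_{F\in\FT}\omega_{TF}\int_F(\bvec{v}\cdot\normal_F)q_T$, which is exactly $\int_T\DIV\bvec{v}~q_T$ by \eqref{eq:ipp:div.grad.T} (read with the scalar and vector fields there played by $q_T$ and $\bvec{v}$). Invoking \eqref{eq:lproj} and the arbitrariness of $q_T\in\Poly{k}(T)$ then yields \eqref{eq:DT.commutation}. I do not expect a genuine obstacle: the argument is a direct ``remove the projectors, then integrate by parts'' computation, and $\cDT$ being already both the full and the discrete divergence means there is no co-domain restriction to reconcile. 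The only points deserving a word of care are the inclusion $\Goly{k-1}(T)\subset\Goly{k}(T)$ used to annihilate the $\GOproj{k}{T}\bvec{v}$ contribution, and the fact that, although $\uIdiv$ was introduced on $C^0(\overline{T})^3$, its defining projections and the formula \eqref{eq:ipp:div.grad.T} remain valid for $\bvec{v}\in H^1(T)^3$ (the normal traces $\bvec{v}\cdot\normal_F$ being well-defined in $L^2(F)$), so the statement holds at the announced regularity by density.
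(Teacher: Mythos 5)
Your proposal is correct and follows essentially the same route as the paper's proof: substitute the components of $\uIdiv\bvec{v}$ into \eqref{eq:DT}, drop the projectors $\Gproj{k-1}{T}$, $\GOproj{k}{T}$ and $\lproj{k}{F}$ using $\GRAD q_T\in\Goly{k-1}(T)$ and $q_{T|F}\in\Poly{k}(F)$, then conclude via \eqref{eq:ipp:div.grad.T} and the definition \eqref{eq:lproj} of $\lproj{k}{T}$. Your closing remark on the $H^1$ regularity of $\bvec{v}$ versus the $C^0$ domain of $\uIdiv$ is a valid point of care that the paper leaves implicit.
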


\begin{proof}
  Setting $\uvec{v}_T\coloneq\uIdiv\bvec{v}$, we have $\bvec{v}_T=\Gproj{k-1}{T}\bvec{v} + \GOproj{k}{T}\bvec{v}$ and thus, for all $q_T\in\Poly{k}(T)$, by definition of these projectors and since $\GRAD q_T\in\Goly{k-1}(T)$,
  \[
  \int_T \bvec{v}_T\cdot\GRAD q_T = \int_T \bvec{v}\cdot\GRAD q_T.
  \]
  Since $v_F=\lproj{k}{F}(\bvec{v}\cdot\normal_F)$ and $(q_T)_{|F}\in\Poly{k}(F)$ for all $F\in\FT$, the definition \eqref{eq:DT} of $\cDT\uvec{v}_T$ therefore shows that
  \[
  \int_T \cDT(\uIdiv \bvec{v})~q_T=-\int_T \bvec{v}\cdot\GRAD q_T + \sum_{F\in\FT}\omega_{TF}\int_F (\bvec{v}\cdot\normal_F)q_T=\int_T \DIV \bvec{v}~ q_T,
  \]
  where the conclusion follows from the integration by parts formula \eqref{eq:ipp:div.grad.T}. Since this relation holds for all $q_T\in\Poly{k}(T)$, it proves \eqref{eq:DT.commutation}.
\end{proof}

\subsubsection{Proof of the exactness of the three-dimensional sequence}
\begin{proof}[Proof of Theorem \ref{th:exact.3D}]
  We have to prove that
  \begin{align}\label{eq:Ker.uGT}
    \Ker\uGT &=\uIgrad\Real,
    \\\label{eq:Ker.uCT}
    \Ker\uCT &= \Image\uGT,
    \\\label{eq:Ker.DT}
    \Ker\cDT &= \Image\uCT,
    \\\label{eq:Image.DT}
    \Image\cDT &= \Poly{k}(T).
  \end{align}
  \underline{1. \emph{Proof of \eqref{eq:Ker.uGT}.}}
  By the consistency properties of the boundary and volume gradients (see \eqref{eq:GE.k.cons}, \eqref{eq:GF.k.cons} and \eqref{eq:GT:polynomial.consistency}), it holds that $\cGE(\uIgrad[\partial T]C)=0$, $\cGF(\uIgrad[\partial T]C)=\bvec{0}$, and $\cGT(\uIgrad C) = \bvec{0}$ for all $C\in\Real$, proving by definition \eqref{eq:def.uGT} of $\uGT$ that
  \[
  \boxed{\uIgrad\Real \subset \Ker\uGT.}
  \]
  
  Let us prove the converse inclusion, i.e.,
  \begin{equation}\label{eq:Ker.uGT.subset.uIgrad.Real}
    \boxed{\Ker\uGT\subset\uIgrad\Real.}
  \end{equation}
  Let $\underline{q}_T\in\uHgrad$ be such that $\uGT\underline{q}_T=\underline{\bvec{0}}$.
  Then, $\uGF\underline{q}_{\partial T}=\uvec{0}$ for all $F\in\FT$ and thus, recalling the two-dimensional exactness proved in Theorem \ref{th:exact.2D} and accounting for the single-valuedness of vertex and edge unknowns, there exists $C\in\Real$ such that $\underline{q}_{\partial T}=\uIgrad[\partial T] C$.
  Therefore, it only remains to prove that $q_T=C$.
  Enforcing $\GOT\underline{q}_T=\bvec{0}$ and using $\GOT=\ROproj{k}{T}\cGT$ (cf. \eqref{eq:def.uGT}) together with the definition \eqref{eq:GT.k} of $\cGT$ and the fact that $\trF\underline{q}_{\partial T}=\trF(\uIgrad[\partial T]C) = C$ by the polynomial consistency \eqref{eq:trF:polynomial.consistency} of this trace reconstruction operator, we infer, for all $\bvec{w}_T\in\Roly{k}(T)^\perp$,
  \[
  0 = \int_T \GOT\underline{q}_T\cdot\bvec{w}_T=\int_T \cGT\underline{q}_T\cdot\bvec{w}_T=-\int_T q_T\DIV\bvec{w}_T + \sum_{F\in\FT}\omega_{TF}\int_F C (\bvec{w}_T\cdot\normal_F)
  = \int_F(q_T-C)\DIV\bvec{w}_T,
  \]
  where we have used the integration by parts formula \eqref{eq:ipp:div.grad.T} with $C$ instead of $q_T$ to conclude.
  Since $\DIV:\Roly{k}(T)^\perp\to\Poly{k-1}(T)$ is surjective by \eqref{eq:iso:DIV} and $q_T\in\Poly{k-1}(T)$, this implies $q_T=\lproj{k-1}{T}C$, thus proving \eqref{eq:Ker.uGT.subset.uIgrad.Real}. 
  \medskip\\
  \underline{2. \emph{Proof of \eqref{eq:Ker.uCT}.}} We start by proving that
  \begin{equation}\label{eq:uGT.uHgrad.subset.Ker.uCT}
    \boxed{\Image\uGT\subset\Ker\uCT,}
  \end{equation}
  that is $\uCT(\uGT\underline{q}_T)=\underline{\bvec{0}}$ for all $\underline{q}_T\in\uHgrad$.
  Theorem \ref{th:exact.2D} implies $\cCF(\uGT\underline{q}_T)=0$ for all $F\in\FT$.
  Let us prove that $\CGT(\uGT\underline{q}_T)=\bvec{0}$.
  From the characterisation \eqref{eq:rep.CGT} of $\CGT$ we infer, for all $\bvec{w}_T\in\Goly{k-1}(T)$,
  \[
  \int_T\CGT(\uGT\underline{q}_T)\cdot\bvec{w}_T
  =
  \sum_{F\in\FT}\omega_{TF}\int_F(\GRF\underline{q}_T+\GROF\underline{q}_T)\cdot(\bvec{w}_T\times\normal_F).
  \]
  Since $\bvec{w}_T\in\GRAD\Poly{k}(T)$, the relation \eqref{eq:gradT.rotF} shows that $\bvec{w}_{T|F}\times\normal_F\in \Roly{k-1}(F)\subset\Roly{k}(F)$ and thus, using $\GROF\underline{q}_T\in\Roly{k}(F)^\perp$ and the relation \eqref{eq:GF} with $\bvec{w}_F=\bvec{w}_{T|F}\times\normal_F$, we continue with
  \begin{align*}
    \int_T\CGT(\uGT\underline{q}_T)\cdot\bvec{w}_T={}&
    \sum_{F\in\FT}\omega_{TF}\int_F\GRF\underline{q}_T\cdot(\bvec{w}_T\times\normal_F)
    \\
    ={}& \sum_{F\in\FT}\omega_{TF} \sum_{E\in\EF}\omega_{FE}\int_E q_E (\bvec{w}_T\times\normal_F)\cdot\normal_{FE}
    \\
    ={}& \sum_{F\in\FT}\omega_{TF} \sum_{E\in\EF}\omega_{FE}\int_E q_E (\bvec{w}_T\cdot\tangent_E)
    = 0,
  \end{align*}
  where we have used \eqref{eq:righthanded} to pass to the third line and \eqref{eq:cancel.edges} with $a_E=\int_Eq_E(\bvec{w}_T\cdot\tangent_E)$ to conclude.
  This implies $\CGT(\uGT\underline{q}_T)=\bvec{0}$.
  \smallskip\\
  We next notice that it holds, for all $\bvec{w}_T\in\Goly{k}(T)^\perp$,
  \[
  \begin{aligned}
    \int_T\CGOT(\uGT\underline{q}_T)\cdot\bvec{w}_T
    &=
    \int_T(\GRT\underline{q}_T+\GROT\underline{q}_T)\cdot\CURL\bvec{w}_T
    + \sum_{F\in\FT}\omega_{TF}\int_F\trFt(\uGF\underline{q}_{\partial T})\cdot(\bvec{w}_T\times\normal_F)
    \\
    &=
    \int_T\GRT\underline{q}_T\cdot\CURL\bvec{w}_T
    + \sum_{F\in\FT}\omega_{TF}\int_F\cGF\underline{q}_{\partial T}\cdot(\bvec{w}_T\times\normal_F)
    =0,
  \end{aligned}
  \]
  where we have used the definition \eqref{eq:CT.k} of $\cCT$ together with $\CGOT=\GOproj{k}{T}\cCT$ (cf. \eqref{eq:def.uCT}) in the first line, the relation $\GROT=\ROproj{k}{T}\cGT$ (cf. \eqref{eq:def.uGT}) together with $\CURL\bvec{w}_T\in \Roly{k-1}(T)$ and the property \eqref{eq:trFt:GF} of the tangential trace reconstruction in the second line, and the link \eqref{eq:link.GT.GF} between volume and face gradients to conclude. This proves \eqref{eq:uGT.uHgrad.subset.Ker.uCT}.
  \medskip

  We next prove the converse inclusion, that is,
  \begin{equation}\label{eq:Ker.uCT.subset.uGT.uHgrad}
    \boxed{\Ker\uCT\subset\Image\uGT.}
  \end{equation}
  This requires to show that, for all $\uvec{v}_T\in\uHcurl$ such that $\uCT\uvec{v}_T=\underline{\bvec{0}}$, there exists $\underline{q}_T\in\uHgrad$ such that $\uvec{v}_T=\uGT\underline{q}_T$.
  For all $F\in\FT$, enforcing $\cCF\uvec{v}_T=0$ in \eqref{eq:cCF} and taking $r_F=1$, we see that $\sum_{E\in\EF}\omega_{FE}\int_E v_E=0$.
  Hence, Proposition \ref{prop:surj.GbdryF} provides $q_{\partial F}\in\Poly[\rm c]{k+1}(\partial F)$ such that $v_E=\cGE q_{\partial F}$ for all $E\in\EF$.
  Each function $q_{\partial F}$ for $F\in\FT$ is defined up to an additive constant which, by the single-valuedness of $(v_E)_{E\in\ET}$ across the faces, can be selected so as to form a continuous function $q_{\partial^2 T}\in\Poly[\rm c]{k+1}(\partial^2 T)$ defined on the whole $\partial^2 T$, and such that $v_E=\cGE q_{\partial^2 T}$ for all $E\in\ET$.
  \smallskip
  
  We next proceed as in Point 3 of the proof of Proposition \ref{prop:almost.exact}:
  first, to infer that, for any choice of $q_{\partial T}=(q_F)_{F\in\FT}\in\Poly{k-1}(\FT)$, letting $\underline{q}_{\partial T}\coloneq( q_{\partial T}, q_{\partial^2 T})$, it holds $\bvec{v}_{\cvec{R},F}=\GRF \underline{q}_{\partial T}$;
  then, to select a proper $q_{\partial T}$ such that $\bvec{v}_{\cvec{R},F}^\perp=\GROF\underline{q}_{\partial T}$ for all $F\in\FT$.
  This proves that $\uvec{v}_{F}=\uGF\underline{q}_{\partial T}$ for all $F\in\FT$.
  
  Let us now show that, for any $q_T\in \Poly{k-1}(T)$, setting $\underline{q}_T\coloneq(q_T,\underline{q}_{\partial T})\in\uHgrad$, it holds $\bvec{v}_{\cvec{R},T}=\GRT\underline{q}_{T}$.
  Applying the definition \eqref{eq:CT.k} of $\cCT$ to an arbitrary $\bvec{w}_T\in\Goly{k}(T)^\perp$ and enforcing $\bvec{0}=\CGOT\uvec{v}_T=\GOproj{k}{T}(\cCT\underline{q}_T)$, we get
  \[
  \begin{aligned}
    \int_T\bvec{v}_T\cdot\CURL\bvec{w}_T
    &= -\sum_{F\in\FT}\omega_{TF}\int_F\trFt\uvec{v}_{\partial T}\cdot(\bvec{w}_T\times\normal_F)
    \\
    &= -\sum_{F\in\FT}\omega_{TF}\int_F\trFt(\uGF\underline{q}_{\partial T})\cdot(\bvec{w}_T\times\normal_F)
    \\
    &= -\sum_{F\in\FT}\omega_{TF}\int_F \cGF\underline{q}_{\partial T}\cdot(\bvec{w}_T\times\normal_F)
    \\
    &= \int_T \GRT\underline{q}_T\cdot\CURL\bvec{w}_T,
  \end{aligned}
  \]
  where we have used the property \eqref{eq:trFt:GF} of the tangential trace reconstruction to pass to the third line and the link \eqref{eq:link.GT.GF} between face and volume gradients to conclude.
  By \eqref{eq:iso:CURL}, $\CURL\bvec{w}_T$ spans $\Roly{k-1}(T)$ when $\bvec{w}_T$ spans $\Goly{k}(T)^\perp$, and we therefore deduce that $\GRT\underline{q}_T=\Rproj{k-1}{T}\bvec{v}_T=\bvec{v}_{\cvec{R},T}$ as desired.
  \smallskip

  It only remains to prove the existence of $q_T\in\Poly{k-1}(T)$ such that $\bvec{v}_{\cvec{R},T}^\perp=\GROT\underline{q}_T$. Recalling that $\GROT\underline{q}_T=\ROproj{k}{T}(\cGT\underline{q}_T)$ (see \eqref{eq:def.uGT}) and applying the definition \eqref{eq:GT.k} of $\cGT\underline{q}_T$ to an arbitrary test function $\bvec{w}_T\in\Roly{k}(T)^\perp$, this requires the following condition to hold:
  \[
  \int_T q_T\DIV\bvec{w}_T
  = -\int_T\bvec{v}_{\cvec{R},T}^\perp\cdot\bvec{w}_T
  + \sum_{F\in\FT}\omega_{TF}\int_F\trF\underline{q}_{\partial T}(\bvec{w}_T\cdot\normal_F)  \qquad\forall\bvec{w}_T\in\Roly{k}(T)^\perp,
  \]
  which appropriately defines $q_T$ since $\DIV:\Roly{k}(T)^\perp\to\Poly{k-1}(T)$ is an isomorphism by \eqref{eq:iso:DIV}.
  This concludes the proof of \eqref{eq:Ker.uCT.subset.uGT.uHgrad}.
  \medskip\\
  \underline{3. \emph{Proof of \eqref{eq:Ker.DT}.}}
  Let us start by proving that $\cDT(\uCT\uvec{v}_T)=0$ for all $\uvec{v}_T\in\uHcurl$, which implies
  \[
  \boxed{\Image\uCT\subset\Ker(\cDT).}
  \]
  For all $q_T\in\Poly{k}(T)$, we have
  \[
  \int_T\cDT(\uCT\uvec{v}_T) q_T
  = -\int_T(\CGT\uvec{v}_T+\cancel{\CGOT\uvec{v}_T})\cdot\GRAD q_T + \sum_{F\in\FT}\omega_{TF}\int_F\cCF\uvec{v}_T~q_T
  =0,
  \]
  where we have used the definition \eqref{eq:DT} of $\cDT$ in the first equality, the $L^2$-orthogonality of $\CGOT\underline{q}_T$ and $\GRAD q_T\in\Goly{k-1}(T)$ in the cancellation, and we have concluded using the link \eqref{eq:CT.CF} between volume and face curls. Since $q_T$ is arbitrary in $\Poly{k}(T)$, this shows that $\cDT(\uCT\uvec{v}_T)=0$.
  \smallskip

  Let us now prove the inclusion
  \begin{equation}\label{eq:Ker.DT.subset.uCT.uHcurl}
    \boxed{\Ker(\cDT)\subset\Image\uCT.}
  \end{equation}
  We fix an element $\uvec{v}_T\in\uHdiv$ such that $\cDT\uvec{v}_T=0$ and prove the existence of $\uvec{z}_T\in\uHcurl$ such that $\uvec{v}_T=\uCT\uvec{z}_T$.
  Enforcing $\cDT\uvec{v}_T=0$ in \eqref{eq:DT} with $q_T=1$, we infer that $\sum_{F\in\FT}\omega_{TF}\int_Fv_F=0$. Lemma \ref{lem:surj.CurlF} then provides $\uvec{z}_{\partial T}\in\uHcurl[\partial T]$ such that $v_F=\cCF\uvec{z}_{\partial T}$ for all $F\in\FT$.
  \smallskip

  Enforcing again $\cDT\uvec{v}_T=0$ in \eqref{eq:DT}, this time for a generic test function $q_T\in\Poly{k}(T)$, and accounting for the previous result, we can write, for all $\uvec{z}_T\in\uHcurl$ whose boundary values are given by $\uvec{z}_{\partial T}$,
  \[
  \int_T\bvec{v}_T\cdot\GRAD q_T
  = \sum_{F\in\FT}\omega_{TF}\int_F\cCF\uvec{z}_{\partial T}~q_T
  = \int_T\CGT\uvec{z}_T\cdot\GRAD q_T,
  \]
  where the conclusion follows from the relation \eqref{eq:CT.CF} linking volume and face curls.
  Since $\GRAD q_T$ spans $\Goly{k-1}(T)$ as $q_T$ spans $\Poly{k}(T)$, this proves that $\CGT\uvec{z}_T=\Gproj{k-1}{T}\bvec{v}_T=\bvec{v}_{\cvec{G},T}$.
  \smallskip

  Finally, we show that, for some $\bvec{z}_T\in\Roly{k-1}(T)$, the vector $\uvec{z}_T=(\bvec{z}_T,\uvec{z}_{\partial T})\in\uHcurl$ satisfies $\bvec{v}_{\cvec{G},T}^\perp=\CGOT\uvec{z}_T$.
  Recalling the definition \eqref{eq:CT.k} of the full curl reconstruction and \eqref{eq:def.uCT} to write $\CGOT=\GOproj{k}{T}\cCT$, this amounts to enforcing the following condition: For all $\bvec{w}_T\in\Goly{k}(T)^\perp$,
  \[
  \int_T\bvec{z}_T\cdot\CURL\bvec{w}_T
  = \int_T\bvec{v}_{\cvec{G},T}^\perp\cdot\bvec{w}_T
  - \sum_{F\in\FT}\omega_{TF}\int_F\trFt\uvec{z}_{\partial T}\cdot(\bvec{w}_T\times\normal_F).
  \]
  By \eqref{eq:iso:CURL}, $\CURL:\Goly{k}(T)^\perp\to\Roly{k-1}(T)$ is an isomorphism and this relation therefore defines a unique $\bvec{z}_T\in\Roly{k-1}(T)$.
  This concludes the proof of \eqref{eq:Ker.DT.subset.uCT.uHcurl}.
  \medskip\\
  \underline{4. \emph{Proof of \eqref{eq:Image.DT}.}}
  Let $q_T\in\Poly{k}(T)$ and let us show the existence of $\uvec{v}_T\in\uHdiv$ such that $q_T=\cDT\uvec{v}_T$.
  By \eqref{eq:iso:DIV}, there exists $\bvec{v}\in\Roly{k+1}(T)^\perp$ such that $\DIV\bvec{v}=q_T$.
  Using the polynomial consistency of $\lproj{k}{T}$ followed by the commutation property \eqref{eq:DT.commutation}, we have $q_T=\DIV\bvec{v}=\lproj{k}{T}(\DIV\bvec{v})=\cDT(\uIdiv\bvec{v})$, which is the desired result with $\uvec{v}_T=\uIdiv\bvec{v}$.
\end{proof}

\subsection{Commutative diagrams}

In this section we prove commutative diagram properties for the discrete three-dimensional sequence.
These commutative diagrams express, in a synthetic manner, crucial compatibility properties of the discrete three-dimensional sequence \eqref{eq:sequence.3D}.
To this end, we recall the definition \eqref{def:nedelec} of the N\'ed\'elec space and we introduce the Raviart--Thomas--N\'ed\'elec space
\begin{equation}\label{eq:RT}
  \RT{k}(T) \coloneq \Poly{k}(T)^3 + (\bvec{x}-\overline{\bvec{x}}_T)\Poly{k}(T).
\end{equation}

\begin{theorem}[Commutative diagrams]\label{thm:commutative.diagrams}
  Denoting by $i_T:\Poly{k}(T)\to\Poly{k}(T)$ the identity operator, the following diagrams commute:
  \begin{equation}\label{eq:commutative.diagrams}
    \begin{tikzcd}
      \Poly{k+1}(T)\arrow{r}{\GRAD}\arrow{d}{\uIgrad}
      & \NE{k}(T)\arrow{r}{\CURL}\arrow{d}{\uIcurl}
      & \RT{k}(T)\arrow{r}{\DIV}\arrow{d}{\uIdiv}
      & \Poly{k}(T)\arrow{d}{i_T}
      \\
      \uHgrad\arrow{r}{\uGT} & \uHcurl\arrow{r}{\uCT} & \uHdiv\arrow{r}{\cDT} & \Poly{k}(T)
    \end{tikzcd}
  \end{equation}
\end{theorem}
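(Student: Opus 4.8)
The plan is to verify the three squares independently and from left to right, treating in each square the volume component through the polynomial consistency of the relevant \emph{full} reconstruction, and the boundary (face and edge) components through the two-dimensional results of Section~\ref{sec:2d.sequence}. The key preliminary observation, used repeatedly, is that the restriction of each three-dimensional interpolator to a face $F$ coincides with the corresponding two-dimensional interpolator on $F$ (and likewise on edges), as dictated by Remarks~\ref{rem:uHgrad.2D.3D} and~\ref{rem:uHcurl.2D.3D} together with the defining formulas \eqref{eq:uIgradT}, \eqref{eq:uIcurlT}, \eqref{eq:uIdivT}.

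For the leftmost square, fix $q\in\Poly{k+1}(T)$ and set $\underline{q}_T\coloneq\uIgrad q$. The volume component of $\uGT\underline{q}_T$ is $\Rproj{k-1}{T}(\cGT\underline{q}_T)+\ROproj{k}{T}(\cGT\underline{q}_T)$, which by the consistency \eqref{eq:GT:polynomial.consistency} equals $\Rproj{k-1}{T}(\GRAD q)+\ROproj{k}{T}(\GRAD q)$, precisely the volume component of $\uIcurl(\GRAD q)$. For a face $F\in\FT$, the restriction of $\underline{q}_T$ to $F$ is $\uIgrad[F](q_{|F})$, so the two-dimensional consistency \eqref{eq:GF.k.cons} gives $\cGF(\uIgrad[F]q_{|F})=\GRAD_F q_{|F}=\normal_F\times((\GRAD q)_{|F}\times\normal_F)$; projecting with $\Rproj{k-1}{F}$ and $\ROproj{k}{F}$ reproduces the face component of $\uIcurl(\GRAD q)$. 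Finally, since the polynomial $q$ satisfies all the interpolation conditions defining the component $q_{\partial^2 T}$ of $\uIgrad q$, one has $q_{\partial^2 T}=q_{|\partial^2 T}$, whence $\cGE\underline{q}_T=(q_{|E})'=(\GRAD q)\cdot\tangent_E\in\Poly{k}(E)$, which is invariant under $\lproj{k}{E}$ and so matches the edge component of $\uIcurl(\GRAD q)$.

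For the middle square, fix $\bvec{v}\in\NE{k}(T)$ and set $\uvec{v}_T\coloneq\uIcurl\bvec{v}$; the membership in $\NE{k}(T)$ is exactly the hypothesis needed to invoke the polynomial consistency \eqref{eq:CT.k:polynomial.consistency}, which gives $\cCT\uvec{v}_T=\CURL\bvec{v}$ and hence, after projection, the volume component $\Gproj{k-1}{T}(\CURL\bvec{v})+\GOproj{k}{T}(\CURL\bvec{v})$ of $\uIdiv(\CURL\bvec{v})$. For the faces, Remark~\ref{rem:uHcurl.2D.3D} identifies the restriction of $\uvec{v}_T$ to $F$ with $\uIrot[F](\normal_F\times(\bvec{v}_{|F}\times\normal_F))$, the agreement of the tangential edge traces following from the fact that $\tangent_E$ lies in the plane of $F$. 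The two-dimensional commutation property \eqref{eq:CF.commutation} then yields $\cCF\uvec{v}_{\partial T}=\lproj{k}{F}\big(\ROT_F(\normal_F\times(\bvec{v}_{|F}\times\normal_F))\big)$, which by the identity \eqref{eq:curlT.divF.rotF} equals $\lproj{k}{F}((\CURL\bvec{v})\cdot\normal_F)$, the face component of $\uIdiv(\CURL\bvec{v})$.

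For the rightmost square, fix $\bvec{v}\in\RT{k}(T)$. A short computation shows $\DIV\bvec{v}\in\Poly{k}(T)$: differentiation lowers the degree on the $\Poly{k}(T)^3$ part, while $\DIV((\bvec{x}-\overline{\bvec{x}}_T)p)=3p+(\bvec{x}-\overline{\bvec{x}}_T)\cdot\GRAD p\in\Poly{k}(T)$ for $p\in\Poly{k}(T)$. Consequently the commutation property \eqref{eq:DT.commutation} gives $\cDT(\uIdiv\bvec{v})=\lproj{k}{T}(\DIV\bvec{v})=\DIV\bvec{v}$, closing the last square. The only genuinely substantive point in the whole argument is the recognition that the three spaces on the top row, $\Poly{k+1}(T)$, $\NE{k}(T)$ and $\RT{k}(T)$, are exactly those on which the consistency results \eqref{eq:GT:polynomial.consistency}, \eqref{eq:CT.k:polynomial.consistency} and the inclusion $\DIV(\RT{k}(T))\subset\Poly{k}(T)$ hold; everything else reduces to the restriction identifications and the already-established two-dimensional properties. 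In particular, the appearance of $\NE{k}(T)$ rather than $\Poly{k}(T)^3$ in the middle is forced by the hypothesis of \eqref{eq:CT.k:polynomial.consistency}, and is the subtlest part to state correctly.
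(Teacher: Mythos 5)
Your proof is correct and follows essentially the same route as the paper: each square is closed by combining the polynomial consistency/commutation properties of the full reconstructions (\eqref{eq:GT:polynomial.consistency}, \eqref{eq:CT.k:polynomial.consistency}, \eqref{eq:DT.commutation}) with the two-dimensional face results and the identity \eqref{eq:curlT.divF.rotF}. The extra details you supply (the identification of face restrictions of the interpolators, and the explicit check that $\DIV(\RT{k}(T))\subset\Poly{k}(T)$) are all consistent with, and implicit in, the paper's argument.
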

\begin{proof}
  Using the polynomial consistency properties \eqref{eq:GT:polynomial.consistency} of $\cGT$, \eqref{eq:GF.k.cons} of $\cGF$, and \eqref{eq:GE.k.cons} of $\cGE$ we infer, for all $q\in\Poly{k+1}(T)$,
  \[
  \begin{gathered}
    \cGT(\uIgrad q) = \GRAD q,
    \qquad
    \cGF(\uIgrad q) = \GRAD_F q_{|F} \quad \forall F\in\FT,
    \\
    \cGE(\uIgrad q) = (q_{|E})' \quad \forall E\in\ET.
  \end{gathered}
  \]
  Plugging these relations into the definition \eqref{eq:def.uGT} of $\uGT$ and recalling the definition \eqref{eq:uIcurlT} of $\uIcurl$ proves the leftmost commutative diagram in \eqref{eq:commutative.diagrams}.
  \medskip\\
  The commutation properties \eqref{eq:CT.k:polynomial.consistency} of $\cCT$ and \eqref{eq:CF.commutation} of $\cCF$, together with $(\uIcurl\bvec{v})_{F}=\uIrot(\normal_F\times(\bvec{v}_{|F}\times\normal_F))$ give, for all $\bvec{v}\in\NE{k}(T)$,
  \[
  \cCT(\uIcurl\bvec{v}) = \CURL\bvec{v},\qquad
  \cCF(\uIcurl\bvec{v}) = \ROT_F(\normal_F\times(\bvec{v}_{|F}\times\normal_F)) = (\CURL\bvec{v})_{|F}\cdot\normal_F\quad\forall F\in\FT,
  \]
  where we have additionally used the fact that $\ROT_F(\normal_F\times(\bvec{v}_{|F}\times\normal_F))\in\Poly{k}(F)$ and the identity \eqref{eq:curlT.divF.rotF}.
  Plugging these relations into the definition \eqref{eq:def.uCT} of $\uCT$ and recalling the definition \eqref{eq:uIdivT} of $\uIdiv$ concludes the proof of the middle commutative diagram in \eqref{eq:commutative.diagrams}.
  \medskip\\
  Finally, the rightmost commutative diagram follows combining \eqref{eq:DT.commutation} and $i_T=\lproj{k}{T}$ on $\Poly{k}(T)$.
\end{proof}

\subsection{Three-dimensional potentials}

\subsubsection{Scalar potential}

Starting from the full gradient \eqref{eq:GT.k} and scalar trace reconstructions $(\trF)_{F\in\FT}$ satisfying the properties \eqref{eq:trF:prop}, we define a scalar potential reconstruction $\pgrad:\uHgrad\to\Poly{k+1}(T)$ as follows:
For all $\underline{q}_T\in\uHgrad$,
\begin{equation}\label{eq:pgradT}
  \int_T\pgrad\underline{q}_T\DIV\bvec{v}_T
  = -\int_T\cGT\underline{q}_T\cdot\bvec{v}_T
  + \sum_{F\in\FT}\omega_{TF}\int_F\trF\underline{q}_{\partial T}(\bvec{v}_T\cdot\normal_F)
  \qquad\forall\bvec{v}_T\in\Roly{k+2}(T)^\perp.
\end{equation}
This relation defines a unique $\pgrad\underline{q}_T$ since $\DIV:\Roly{k+2}(T)^\perp\to\Poly{k+1}(T)$ is an isomorphism by \eqref{eq:iso:DIV}.
Combining the polynomial consistencies \eqref{eq:GT:polynomial.consistency} of the full gradient and \eqref{eq:trF:polynomial.consistency} of the scalar trace reconstructions with the integration by parts formula \eqref{eq:ipp:div.grad.T}, it is inferred that
\begin{equation}\label{eq:pgradT:polynomial.consistency}
  \pgrad(\uIgrad q) = q\qquad\forall q\in\Poly{k+1}(T).
\end{equation}
Notice that other choices are possible for a scalar potential reconstruction satisfying \eqref{eq:pgradT:polynomial.consistency}.
  In the spirit of Remark \ref{rem:trFt:alternative} one can take, e.g., $\pgrad\underline{q}_T-\lproj{k-1}{T}(\pgrad\underline{q}_T)+q_T$ as three-dimensional scalar potential, which has the additional property that its $L^2$-orthogonal projection on $\Poly{k-1}(T)$ coincides with the internal unknown $q_T$.

\subsubsection[Vector potential on XcurlT]{Vector potential on $\uHcurl$}

A vector potential reconstruction $\pcurl:\uHcurl\to\Poly{k}(T)^3$ is obtained as follows:
For all $\uvec{v}_T\in\uHcurl$,
\begin{subequations}\label{eq:pcurlT}
  \begin{alignat}{2} \label{eq:pcurlT:RTk}
    \int_T\pcurl\uvec{v}_T\cdot\CURL\bvec{w}_T
    &=
    \int_T\cCT\uvec{v}_T{\cdot}\bvec{w}_T
    -\hspace{-1ex}
    \sum_{F\in\FT}\!\!\omega_{TF}\!\!\int_F\trFt\uvec{v}_{\partial T}{\cdot}\,(\bvec{w}_T\times\normal_F)
    &\quad&
    \forall\bvec{w}_T\in\Goly{k+1}(T)^\perp,
    \\ \label{eq:pcurlT:RTk.perp}
    \int_T\pcurl\uvec{v}_T\cdot\bvec{w}_T
    &= \int_T\bvec{v}_{\cvec{R},T}^\perp\cdot\bvec{w}_T
    &\quad&\forall\bvec{w}_T\in\Roly{k}(T)^\perp.
  \end{alignat}
\end{subequations}
To check that these equations define a unique $\pcurl\uvec{v}_T\in\Poly{k}(T)^3$, observe that  \eqref{eq:pcurlT:RTk} and \eqref{eq:pcurlT:RTk.perp} prescribe, respectively, $\Rproj{k}{T}(\pcurl\uvec{v}_T)$ (since $\CURL:\Goly{k+1}(T)^\perp\to\Roly{k}(T)$ is an isomorphism, see \eqref{eq:iso:CURL}) and $\ROproj{k}{T}(\pcurl\uvec{v}_T)$, and recall the orthogonal decomposition $\Poly{k}(T)^3=\Roly{k}(T)\oplus\Roly{k}(T)^\perp$.
In the spirit of Remark \ref{rem:trFt:alternative}, an alternative vector potential reconstruction in $\uHcurl$ is $\pcurl\uvec{v}_T-\Rproj{k-1}{T}(\pcurl\uvec{v}_T)+\bvec{v}_{\cvec{R},T}$, which has the additional property that its $L^2$-orthogonal projection on $\Roly{k-1}(T)$ coincides with the internal unknown $\bvec{v}_{\cvec{R},T}$.
\begin{proposition}[Consistency of $\pcurl$]
  It holds
  \begin{equation}\label{eq:pcurlT:polynomial.consistency}
    \pcurl(\uIcurl\bvec{v})=\bvec{v} \qquad\forall\bvec{v}\in\Poly{k}(T)^3.
  \end{equation}
\end{proposition}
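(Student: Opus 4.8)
The plan is to exploit the orthogonal decomposition $\Poly{k}(T)^3=\Roly{k}(T)\oplus\Roly{k}(T)^\perp$ and to show separately that $\pcurl(\uIcurl\bvec{v})$ and $\bvec{v}$ have identical components in each summand, since \eqref{eq:pcurlT:RTk} and \eqref{eq:pcurlT:RTk.perp} precisely prescribe these two components (respectively $\Rproj{k}{T}$ and $\ROproj{k}{T}$ of $\pcurl$). Throughout I would set $\uvec{v}_T\coloneq\uIcurl\bvec{v}$ and note that $\bvec{v}\in\Poly{k}(T)^3\subset\NE{k}(T)$, so the polynomial consistency \eqref{eq:CT.k:polynomial.consistency} of $\cCT$ is available.

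The $\Roly{k}(T)^\perp$ component is immediate: by the definition \eqref{eq:uIcurlT} of $\uIcurl$ one has $\bvec{v}_{\cvec{R},T}^\perp=\ROproj{k}{T}\bvec{v}$, so \eqref{eq:pcurlT:RTk.perp} gives $\ROproj{k}{T}(\pcurl\uvec{v}_T)=\ROproj{k}{T}\bvec{v}$ at once. For the $\Roly{k}(T)$ component I would start from \eqref{eq:pcurlT:RTk}, replace $\cCT\uvec{v}_T$ by $\CURL\bvec{v}$ using \eqref{eq:CT.k:polynomial.consistency}, and replace each face trace $\trFt\uvec{v}_{\partial T}$ by $\normal_F\times(\bvec{v}_{|F}\times\normal_F)$ using the identity \eqref{eq:comut.cCT.0} already derived in the proof of Lemma \ref{lem:prop.CT}, together with the fact that $\vlproj{k}{F}$ fixes the degree-$k$ tangential projection. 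The right-hand side of \eqref{eq:pcurlT:RTk} then becomes exactly the boundary-plus-volume expression appearing in the integration by parts formula \eqref{eq:ipp:curl.T}, which collapses it to $\int_T\bvec{v}\cdot\CURL\bvec{w}_T$ for every $\bvec{w}_T\in\Goly{k+1}(T)^\perp$. Since $\CURL:\Goly{k+1}(T)^\perp\to\Roly{k}(T)$ is an isomorphism by \eqref{eq:iso:CURL}, the element $\CURL\bvec{w}_T$ spans $\Roly{k}(T)$, and I conclude $\Rproj{k}{T}(\pcurl\uvec{v}_T)=\Rproj{k}{T}\bvec{v}$. Adding the two components via the orthogonal decomposition yields \eqref{eq:pcurlT:polynomial.consistency}.

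The only genuinely delicate point is justifying $\trFt\uvec{v}_{\partial T}=\normal_F\times(\bvec{v}_{|F}\times\normal_F)$, which rests on checking that the face field $\normal_F\times(\bvec{v}_{|F}\times\normal_F)$ meets the hypotheses of Proposition \ref{prop:trFt:consistency.1}, namely that $\ROT_F$ of it lies in $\Poly{k}(F)$ and that its tangential trace on each edge lies in $\Poly{k}(E)$. This verification is identical to the one already carried out for $\cCT$ in the first part of the proof of Lemma \ref{lem:prop.CT}, so I would simply invoke it rather than repeat the edge-by-edge argument. Everything else amounts to substituting the known consistency and commutation properties and then recognising the integration by parts pattern, so no new estimate is required.
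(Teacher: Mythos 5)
Your proposal is correct and follows essentially the same route as the paper's proof: decompose along $\Poly{k}(T)^3=\Roly{k}(T)\oplus\Roly{k}(T)^\perp$, identify the $\Roly{k}(T)^\perp$ component directly from \eqref{eq:pcurlT:RTk.perp} and \eqref{eq:uIcurlT}, and recover the $\Roly{k}(T)$ component by substituting the consistency of $\cCT$ and of $\trFt$ into \eqref{eq:pcurlT:RTk} and recognising the integration by parts formula \eqref{eq:ipp:curl.T}. The only cosmetic difference is that you invoke the identity \eqref{eq:comut.cCT.0} from the proof of Lemma \ref{lem:prop.CT}, whereas the paper cites Proposition \ref{prop:trFt:consistency.1} directly (the hypotheses of which are immediate here since $\normal_F\times(\bvec{v}_{|F}\times\normal_F)\in\Poly{k}(F)^2$); these amount to the same thing.
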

\begin{proof}
  Let $\bvec{v}\in\Poly{k}(T)^3$. Applying \eqref{eq:pcurlT:RTk} to $\uvec{v}_T=\uIcurl\bvec{v}$ and using the consistency properties \eqref{eq:CT.k:polynomial.consistency} of $\cCT$ and \eqref{eq:trFt:polynomial.consistency} of $\trFt$ we obtain, for all $\bvec{w}_T\in\Goly{k+1}(T)^\perp$,
  \[
  \int_T\pcurl(\uIcurl\bvec{v})\cdot\CURL\bvec{w}_T
  =
  \int_T\CURL\bvec{v}\cdot\bvec{w}_T-
  \sum_{F\in\FT}\omega_{TF}\int_F\vlproj{k}{F}(\normal_F\times(\bvec{v}_{|F}\times\normal_F))\cdot(\bvec{w}_T\times\normal_F).
  \]
  Since $\normal_F\times(\bvec{v}_{|F}\times\normal_F)\in\Poly{k}(F)^2$, the projector $\vlproj{k}{F}$ above can be removed and, invoking the integration by parts formula \eqref{eq:ipp:curl.T}, it is inferred that $\Rproj{k}{T}(\pcurl(\uIcurl\bvec{v}))=\Rproj{k}{T}\bvec{v}$.
  On the other hand, \eqref{eq:pcurlT:RTk.perp} and the definition \eqref{eq:uIcurlT} of $\uIcurl$ readily imply $\ROproj{k}{T}(\pcurl(\uIcurl\bvec{v}))=\ROproj{k}{T}\bvec{v}$.
  Recalling the orthogonal decomposition $\Poly{k}(T)^3=\Roly{k}(T)\oplus\Roly{k}(T)^\perp$, the conclusion follows.
\end{proof}

\subsubsection[Vector potential on XdivT]{Vector potential on $\uHdiv$}

The vector potential in $\uHdiv$ is $\pdiv:\uHdiv\to\Poly{k}(T)^3$ such that, for all $\uvec{v}_T\in\uHdiv$,
\begin{subequations}\label{eq:pdivT}
  \begin{alignat}{2} \label{eq:pdivT:Goly.k}
    \int_T\pdiv\uvec{v}_T\cdot\GRAD q_T
    &= -\int_T\cDT\uvec{v}_T q_T
    + \sum_{F\in\FT}\omega_{TF}\int_F v_F q_T
    &\qquad&\forall q_T\in\Poly{0,k+1}(T),
    \\ \label{eq:pdivT:Goly.k.perp}
    \int_T\pdiv\uvec{v}_T\cdot\bvec{w}_T
    &=\int_T\bvec{v}_{\cvec{G},T}^\perp\cdot\bvec{w}_T
    &\qquad&\forall\bvec{w}_T\in\Goly{k}(T)^\perp.
  \end{alignat}
\end{subequations}
These equations prescribe, respectively, $\Gproj{k}{T}(\pdiv\uvec{v}_T)$ and $\GOproj{k}{T}(\pdiv\uvec{v}_T)$, hence $\pdiv\uvec{v}_T$ by virtue of the orthogonal decomposition $\Poly{k}(T)^3=\Goly{k}(T)\oplus\Goly{k}(T)^\perp$.
In the spirit of Remark \ref{rem:trFt:alternative}, an alternative vector potential reconstruction in $\uHdiv$ is $\pdiv\uvec{v}_T-\Gproj{k-1}{T}(\pdiv\uvec{v}_T)+\bvec{v}_{\cvec{G},T}$, which has the additional property that its $L^2$-orthogonal projection on $\Goly{k-1}(T)$ coincides with the internal unknown $\bvec{v}_{\cvec{G},T}$.
\begin{proposition}[Consistency of $\pdiv$]
  It holds:
  \begin{equation}\label{eq:pdivT:polynomial.consistency}
    \pdiv(\uIdiv\bvec{v}) = \vlproj{k}{T}\bvec{v} \qquad \forall\bvec{v}\in\RT{k}(T).
  \end{equation}
\end{proposition}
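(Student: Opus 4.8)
The plan is to mirror the consistency proof for $\pcurl$ given just above: since both $\pdiv(\uIdiv\bvec{v})$ and $\vlproj{k}{T}\bvec{v}$ belong to $\Poly{k}(T)^3=\Goly{k}(T)\oplus\Goly{k}(T)^\perp$, it suffices to show that their $\Goly{k}(T)$- and $\Goly{k}(T)^\perp$-components coincide. Throughout I fix $\bvec{v}\in\RT{k}(T)$ and set $\uvec{v}_T\coloneq\uIdiv\bvec{v}$. I would first dispatch the orthogonal-complement component: by the definition \eqref{eq:uIdivT} of $\uIdiv$, the $\Goly{k}(T)^\perp$-component of $\uvec{v}_T$ is $\bvec{v}_{\cvec{G},T}^\perp=\GOproj{k}{T}\bvec{v}$, so \eqref{eq:pdivT:Goly.k.perp} together with the defining property of $\GOproj{k}{T}$ gives $\int_T\pdiv(\uIdiv\bvec{v})\cdot\bvec{w}_T=\int_T\GOproj{k}{T}\bvec{v}\cdot\bvec{w}_T=\int_T\bvec{v}\cdot\bvec{w}_T$ for all $\bvec{w}_T\in\Goly{k}(T)^\perp$, whence $\GOproj{k}{T}(\pdiv(\uIdiv\bvec{v}))=\GOproj{k}{T}\bvec{v}$.

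The substantial part is the $\Goly{k}(T)$-component, obtained from \eqref{eq:pdivT:Goly.k}. Substituting the commutation property \eqref{eq:DT.commutation}, i.e.\ $\cDT(\uIdiv\bvec{v})=\lproj{k}{T}(\DIV\bvec{v})$, and the face values $v_F=\lproj{k}{F}(\bvec{v}\cdot\normal_F)$ from \eqref{eq:uIdivT}, I would write, for all $q_T\in\Poly{0,k+1}(T)$,
\[
\int_T\pdiv(\uIdiv\bvec{v})\cdot\GRAD q_T=-\int_T\lproj{k}{T}(\DIV\bvec{v})\,q_T+\sum_{F\in\FT}\omega_{TF}\int_F\lproj{k}{F}(\bvec{v}\cdot\normal_F)\,q_T.
\]
The crux -- and the only place the hypothesis $\bvec{v}\in\RT{k}(T)$ is genuinely used -- is that the structure of the Raviart--Thomas--N\'ed\'elec space renders both projectors inactive: writing $\bvec{v}=\bvec{p}+(\bvec{x}-\overline{\bvec{x}}_T)r$ with $\bvec{p}\in\Poly{k}(T)^3$ and $r\in\Poly{k}(T)$, a direct computation shows $\DIV\bvec{v}\in\Poly{k}(T)$, hence $\lproj{k}{T}(\DIV\bvec{v})=\DIV\bvec{v}$; and since $\bvec{x}\cdot\normal_F$ is constant on the planar face $F$, the restriction $(\bvec{v}\cdot\normal_F)_{|F}$ lies in $\Poly{k}(F)$, hence $\lproj{k}{F}(\bvec{v}\cdot\normal_F)=(\bvec{v}\cdot\normal_F)_{|F}$. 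Removing the projectors and recognising the right-hand side through the integration by parts formula \eqref{eq:ipp:div.grad.T} yields $\int_T\pdiv(\uIdiv\bvec{v})\cdot\GRAD q_T=\int_T\bvec{v}\cdot\GRAD q_T$ for all $q_T\in\Poly{0,k+1}(T)$. Since $\GRAD:\Poly{0,k+1}(T)\to\Goly{k}(T)$ is an isomorphism by \eqref{eq:iso:VROTF.GRAD}, the function $\GRAD q_T$ spans $\Goly{k}(T)$, and therefore $\Gproj{k}{T}(\pdiv(\uIdiv\bvec{v}))=\Gproj{k}{T}\bvec{v}$.

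Finally I would combine the two components using the orthogonal decomposition $\Poly{k}(T)^3=\Goly{k}(T)\oplus\Goly{k}(T)^\perp$:
\[
\pdiv(\uIdiv\bvec{v})=\Gproj{k}{T}(\pdiv(\uIdiv\bvec{v}))+\GOproj{k}{T}(\pdiv(\uIdiv\bvec{v}))=\Gproj{k}{T}\bvec{v}+\GOproj{k}{T}\bvec{v}=\vlproj{k}{T}\bvec{v},
\]
the last equality holding because $\Gproj{k}{T}$ and $\GOproj{k}{T}$ are the $L^2$-projections onto complementary subspaces of $\Poly{k}(T)^3$ and therefore sum to $\vlproj{k}{T}$ (each agreeing, against test functions in $\Poly{k}(T)^3$, with the corresponding projection of $\vlproj{k}{T}\bvec{v}$). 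Note the contrast with the $\pcurl$ case: there the analogue of this step produced $\bvec{v}$ itself because $\bvec{v}\in\Poly{k}(T)^3$, whereas here $\bvec{v}\notin\Poly{k}(T)^3$ in general and the right-hand side is $\vlproj{k}{T}\bvec{v}$. The only real obstacle is the projector-removal step; everything else is the same bookkeeping as in the $\pcurl$ proof, and I expect no hidden difficulty, though I would double-check that the centroid shift $(\bvec{x}-\overline{\bvec{x}}_T)$ does not spoil the degree counts for $\DIV\bvec{v}$ and for $(\bvec{v}\cdot\normal_F)_{|F}$ (it does not).
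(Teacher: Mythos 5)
Your proposal is correct and follows essentially the same route as the paper's proof: use \eqref{eq:pdivT:Goly.k} with the commutation property \eqref{eq:DT.commutation}, remove the projectors by noting that the Raviart--Thomas--N\'ed\'elec structure makes $\DIV\bvec{v}\in\Poly{k}(T)$ and $(\bvec{v}\cdot\normal_F)_{|F}\in\Poly{k}(F)$, integrate by parts via \eqref{eq:ipp:div.grad.T}, handle the $\Goly{k}(T)^\perp$-component directly from \eqref{eq:pdivT:Goly.k.perp}, and conclude by the orthogonal decomposition $\Poly{k}(T)^3=\Goly{k}(T)\oplus\Goly{k}(T)^\perp$. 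The projector-removal step you flag as the crux is indeed the only place the hypothesis $\bvec{v}\in\RT{k}(T)$ is used, exactly as in the paper.
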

\begin{proof}
  Let $\bvec{v}\in\RT{k}(T)$ and set $\uvec{v}_T=\uIdiv\bvec{v}$.
  Recalling the commutation property \eqref{eq:DT.commutation} of $\cDT$ and the definition \eqref{eq:uIdivT} of $\uIdiv$, \eqref{eq:pdivT:Goly.k} gives:
  For all $q_T\in\Poly{0,k+1}(T)$,
  \[
  \int_T\pdiv(\uIdiv\bvec{v})\cdot\GRAD q_T
  = -\int_T\lproj{k}{T}(\DIV\bvec{v}) q_T
  + \sum_{F\in\FT}\omega_{TF}\int_F \lproj{k}{F}(\bvec{v}\cdot\normal_F) q_T.
  \]
  Since $\bvec{v}\in\RT{k}(T)$, we have $\DIV\bvec{v}\in\Poly{k}(T)$ and $\bvec{v}_{|F}\cdot\normal_F\in\Poly{k}(F)$ for all $F\in\FT$ (this is a consequence of the definition \eqref{eq:RT} of the Raviart--Thomas space observing that, for all $F\in\FT$, the mapping $F\ni\bvec{x}\mapsto(\bvec{x}-\overline{\bvec{x}}_T)\cdot\normal_F\in\Real$ is constant); hence, the projectors can be removed from the right-hand side of the above expression.
  Invoking then the integration by parts formula \eqref{eq:ipp:div.grad.T}, it is inferred that $\Gproj{k}{T}(\pdiv(\uIdiv\bvec{v}))=\Gproj{k}{T}\bvec{v}$.
  Equation \eqref{eq:pdivT:Goly.k.perp}, on the other hand, readily implies $\GOproj{k}{T}(\pdiv(\uIdiv\bvec{v}))=\GOproj{k}{T}\bvec{v}$.
  Combining these facts with the orthogonal decomposition $\Poly{k}(T)^3=\Goly{k}(T)\oplus\Goly{k}(T)^\perp$, \eqref{eq:pdivT:polynomial.consistency} follows.
\end{proof}

\subsection{Three-dimensional discrete $L^2$-products}\label{sec:L2prod.3D}

We next define discrete counterparts of the $L^2$-products in $H^1(T)$, $\Hcurl{T}$, and $\Hdiv{T}$:
\begin{itemize}
\item $(\cdot,\cdot)_{\GRAD,T}:\uHgrad\times\uHgrad\to\Real$ such that, for all $\underline{q}_T,\underline{r}_T\in\uHgrad$,
  \begin{equation}\label{eq:().grad.T}
    \begin{aligned}
      (\underline{q}_T,\underline{r}_T)_{\GRAD,T}
      \coloneq{}&\int_T\pgrad\underline{q}_T~\pgrad\underline{r}_T + \int_T \delta_{\GRAD,T}^{k-1}\underline{q}_T~\delta_{\GRAD,T}^{k-1}\underline{r}_T\\
      &+ \sum_{F\in\FT} h_T\int_F\delta_{\GRAD,F}^{k-1}\underline{q}_T~\delta_{\GRAD,F}^{k-1}\underline{r}_T
      +\sum_{E\in\ET} h_T^2\int_E\delta_{\GRAD,E}^{k+1}\underline{q}_T~\delta_{\GRAD,E}^{k+1}\underline{r}_T,
    \end{aligned}
  \end{equation}
  where $h_T$ is the diameter of $T$ and we have set, for any $\underline{q}_T\in\uHgrad$,
  \[
  \big(
  \delta_{\GRAD,T}^{k-1}\underline{q}_T,
  (\delta_{\GRAD,F}^{k-1}\underline{q}_T)_{F\in\FT},
  (\delta_{\GRAD,E}^{k+1}\underline{q}_T)_{E\in\ET}
  \big)
  \coloneq
  \uIgrad(\pgrad\underline{q}_T)-\underline{q}_{T}.
  \]
\item $(\cdot,\cdot)_{\CURL,T}:\uHcurl\times\uHcurl\to\Real$ such that, for all $\uvec{v}_T,\uvec{w}
  _T\in\uHcurl$, 
  \begin{equation}\label{eq:().curl.T}
    \begin{aligned}
      (\uvec{v}_T,\uvec{w}_T)_{\CURL,T}
      \coloneq{}&\int_T\pcurl\uvec{v}_T\cdot\pcurl\uvec{w}_T\\
      &+\int_T\bvec{\delta}_{\CURL,T}^{k-1}\uvec{v}_T\cdot\bvec{\delta}_{\CURL,T}^{k-1}\uvec{w}_T      
        \\
      &+ \sum_{F\in\FT}h_T\int_F\left(
      \bvec{\delta}_{\CURL,F}^{k-1}\uvec{v}_T\cdot\bvec{\delta}_{\CURL,F}^{k-1}\uvec{w}_T  
      + \bvec{\delta}_{\CURL,F}^{\perp,k}\uvec{v}_T\cdot\bvec{\delta}_{\CURL,F}^{\perp,k}\uvec{w}_T
      \right)
      \\
      &
      + \sum_{E\in\ET}h_T^2\int_E\delta_{\CURL,E}^k\uvec{v}_T~\delta_{\CURL,E}^k\uvec{w}_T,
    \end{aligned}
  \end{equation}
  where we have set, for all $\uvec{v}_T\in\uHcurl$, with obvious notations,
  \[
  \big(
  \bvec{\delta}_{\CURL,T}^{k-1}\uvec{v}_T+\bvec{\delta}_{\CURL,T}^{\perp,k}\uvec{v}_T,
  (\bvec{\delta}_{\CURL,F}^{k-1}\uvec{v}_T+\bvec{\delta}_{\CURL,F}^{\perp,k}\uvec{v}_T)_{F\in\FT},
  (\delta_{\CURL,E}^k\uvec{v}_T)_{E\in\ET}
  \big)
  \coloneq\uIcurl(\pcurl\uvec{v}_T) - \uvec{v}_{T}.
  \]
  Notice that,  by \eqref{eq:pcurlT:RTk.perp}, it holds $\bvec{\delta}_{\CURL,T}^{\perp,k}\uvec{v}_T=\bvec{0}$, so there is no need to penalise this quantity in \eqref{eq:().curl.T}.
\item $(\cdot,\cdot)_{\DIV,T}:\uHdiv\times\uHdiv\to\Real$ such that, for all $\uvec{v}_T,\uvec{w}_T\in\uHdiv$,
  \begin{equation}\label{eq:().div.T}
    \begin{aligned}
      (\uvec{v}_T,\uvec{w}_T)_{\DIV,T}
      &\coloneq
      \int_T\pdiv\uvec{v}_T\cdot\pdiv\uvec{w}_T
      + \sum_{F\in\FT}h_T\int_F\delta_{\DIV,F}^k\uvec{v}_T~\delta_{\DIV,F}^k\uvec{w}_T,
    \end{aligned}
  \end{equation}
  where we have set, for all $\uvec{v}_T\in\uHdiv$,
  \[
  \delta_{\DIV,F}^k\uvec{v}_T
  \coloneq \pdiv\uvec{v}_T\cdot\normal_F-v_F
  \qquad\forall F\in\FT.
  \]
\end{itemize}

\begin{remark}[Discrete $L^2$-product in $\uHdiv$]
    Also for $\uHdiv$ it is possible to define a discrete $L^2$-product where all the components of $\uIdiv(\pdiv\uvec{v}_T)-\uvec{v}_T$ are penalised.
    It turns out, however, that penalising the volume differences is not required to prove definiteness; cf. the proof of Lemma \ref{lem:L2.products:3d} below.
\end{remark}

\begin{lemma}[Discrete $L^2$-products]\label{lem:L2.products:3d}
  The bilinear forms $(\cdot,\cdot)_{\bullet,T}$, with $\bullet\in\{\GRAD,\CURL,\DIV\}$, are positive definite.
  Additionally, they satisfy the following consistency properties:
  \begin{alignat}{2}
    \label{eq:().grad.cons}
    (\uIgrad q,\uIgrad r)_{\GRAD,T}&=(q,r)_{L^2(T)}&\qquad&\forall q,r\in\Poly{k+1}(T),\\
    \label{eq:().curl.cons}
    (\uIcurl\bvec{v},\uIcurl \bvec{w})_{\CURL,T}&=(\bvec{v},\bvec{w})_{L^2(T)^3}&\qquad&\forall \bvec{v},\bvec{w}\in\Poly{k}(T)^3,\\
    \label{eq:().div.cons}
    (\uIdiv \bvec{v},\uIdiv \bvec{w})_{\DIV,T}&=(\bvec{v},\bvec{w})_{L^2(T)^3}&\qquad&\forall \bvec{v},\bvec{w}\in\Poly{k}(T)^3.
  \end{alignat}
\end{lemma}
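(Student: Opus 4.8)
The plan is to dispatch symmetry, positive semi-definiteness, and the consistency identities quickly, and then to establish definiteness, treating the gradient and curl cases by a common mechanism and the divergence case separately. Each of the three bilinear forms is manifestly symmetric and, being a sum of an $L^2$-product of potential reconstructions with penalty terms that are themselves $L^2$-products of residuals, positive semi-definite; this settles the first claim up to definiteness.

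For the consistency properties, I would substitute $\uIgrad q$ (resp.\ $\uIcurl\bvec{v}$, $\uIdiv\bvec{v}$) into the relevant form and invoke the polynomial consistencies \eqref{eq:pgradT:polynomial.consistency}, \eqref{eq:pcurlT:polynomial.consistency}, \eqref{eq:pdivT:polynomial.consistency}, which give $\pgrad(\uIgrad q)=q$, $\pcurl(\uIcurl\bvec{v})=\bvec{v}$, and $\pdiv(\uIdiv\bvec{v})=\vlproj{k}{T}\bvec{v}=\bvec{v}$, the last equality holding because $\bvec{v}\in\Poly{k}(T)^3\subset\RT{k}(T)$. In the gradient and curl cases this immediately forces the full residuals $\uIgrad(\pgrad\underline{q}_T)-\underline{q}_T$ and $\uIcurl(\pcurl\uvec{v}_T)-\uvec{v}_T$ to vanish, so only the consistent $L^2$-term survives and equals the desired continuous product. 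For the divergence I would additionally check that the single penalized residual $\delta_{\DIV,F}^k(\uIdiv\bvec{v})=\pdiv(\uIdiv\bvec{v})\cdot\normal_F-\lproj{k}{F}(\bvec{v}\cdot\normal_F)$ vanishes, using $\bvec{v}\cdot\normal_F\in\Poly{k}(F)$ to remove the projector; again only the $L^2$-term remains.

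For definiteness in the gradient and curl cases, I would suppose the form vanishes on a single argument. Vanishing of the consistent term forces $\pgrad\underline{q}_T=0$ (resp.\ $\pcurl\uvec{v}_T=\bvec{0}$), while vanishing of the penalty terms forces each penalized component of $\uIgrad(\pgrad\underline{q}_T)-\underline{q}_T$ (resp.\ $\uIcurl(\pcurl\uvec{v}_T)-\uvec{v}_T$) to vanish. For the curl, the only unpenalized component $\bvec{\delta}_{\CURL,T}^{\perp,k}\uvec{v}_T$ is automatically zero by the observation recorded after \eqref{eq:().curl.T} (which rests on \eqref{eq:pcurlT:RTk.perp}), so the entire residual vanishes. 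Consequently $\underline{q}_T=\uIgrad(0)=\underline{0}$ and $\uvec{v}_T=\uIcurl(\bvec{0})=\uvec{0}$, respectively.

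The divergence case is the main obstacle, since the stabilization penalizes only the normal-trace residuals $\delta_{\DIV,F}^k$ rather than a full interpolation residual, and must be handled through the defining relations of $\pdiv$ and $\cDT$. Vanishing of the form gives $\pdiv\uvec{v}_T=\bvec{0}$ and $\pdiv\uvec{v}_T\cdot\normal_F=v_F$ for all $F$, whence $v_F=0$. From \eqref{eq:pdivT:Goly.k.perp} I read off $\bvec{v}_{\cvec{G},T}^\perp=\GOproj{k}{T}(\pdiv\uvec{v}_T)=\bvec{0}$. Then \eqref{eq:pdivT:Goly.k} with $\pdiv\uvec{v}_T=\bvec{0}$ and $v_F=0$ shows $\int_T\cDT\uvec{v}_T\,q_T=0$ for all $q_T\in\Poly{0,k+1}(T)$, so $\cDT\uvec{v}_T$ is constant; taking $q_T=1$ in \eqref{eq:DT} and using $v_F=0$ gives $\int_T\cDT\uvec{v}_T=0$, hence $\cDT\uvec{v}_T=0$. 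Feeding $\cDT\uvec{v}_T=0$ and $v_F=0$ back into \eqref{eq:DT} yields $\int_T\bvec{v}_T\cdot\GRAD q_T=0$ for all $q_T\in\Poly{k}(T)$, i.e.\ $\bvec{v}_T$ is $L^2$-orthogonal to $\Goly{k-1}(T)$; since $\bvec{v}_T=\bvec{v}_{\cvec{G},T}\in\Goly{k-1}(T)$, this forces $\bvec{v}_{\cvec{G},T}=\bvec{0}$. Combined with $\bvec{v}_{\cvec{G},T}^\perp=\bvec{0}$ and $v_F=0$, we conclude $\uvec{v}_T=\uvec{0}$.
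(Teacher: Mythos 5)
Your proof is correct and takes essentially the same route as the paper's: definiteness of the gradient and curl forms via the vanishing of the full interpolation residual (with the unpenalised component $\bvec{\delta}_{\CURL,T}^{\perp,k}$ handled by \eqref{eq:pcurlT:RTk.perp}), definiteness of the divergence form by combining \eqref{eq:pdivT:Goly.k}, \eqref{eq:DT} and \eqref{eq:pdivT:Goly.k.perp} to kill $\bvec{v}_{\cvec{G},T}$ and $\bvec{v}_{\cvec{G},T}^\perp$, and consistency from the polynomial consistency of the potential reconstructions. Your intermediate step of first showing $\cDT\uvec{v}_T=0$ is a harmless reorganisation of the paper's direct use of the surjectivity of $\GRAD:\Poly{0,k}(T)\to\Goly{k-1}(T)$.
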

\begin{proof}
  Let us first prove the positive definiteness of the bilinear forms $(\cdot,\cdot)_{\bullet,T}$. By inspection, they are positive semi-definite, and it only remains to prove that they are definite. 

  Consider first the case of $(\cdot,\cdot)_{\GRAD,T}$. Let $\underline{q}_T\in\uHgrad$ be such that $(\underline{q}_T,\underline{q}_T)_{\GRAD,T}=0$. Then, obviously from \eqref{eq:().grad.T}, we have $\pgrad\underline{q}_T=0$, $\delta_{\GRAD,T}^{k-1}\underline{q}_T=0$, $\delta_{\GRAD,F}^{k-1}\underline{q}_T=0$ for all $F\in\FT$ and $\delta_{\GRAD,E}^{k+1}\underline{q}_T=0$ for all $E\in\ET$. This gives
  $\underline{0}=\uIgrad(\cancel{\pgrad\underline{q}_T})-\underline{q}_T$, and thus $\underline{q}_T=\underline{0}$ as required.

  The definiteness of $(\cdot,\cdot)_{\CURL,T}$ is obtained exactly the same way, so let us turn to $(\cdot,\cdot)_{\DIV,T}$. If $\uvec{v}_T\in \uHdiv$ is such that $(\uvec{v}_T,\uvec{v}_T)_{\DIV,T}=0$ then $\pdiv\uvec{v}_T=\bvec{0}$ and $\delta_{\DIV,F}^k\uvec{v}_T=0$ for all $F\in\FT$. This shows that $v_F=\lproj{k}{F}(\pdiv\uvec{v}_T\cdot\normal_F)-\delta_{\DIV,F}^k\uvec{v}_T=0$ for all $F\in\FT$. Using then \eqref{eq:pdivT:Goly.k}, we infer that
  \[
  \int_T \cDT\uvec{v}_T~q_T = 0\qquad\forall q_T\in\Poly{0,k}(T).
  \]
  The definition \eqref{eq:DT} of $\cDT$ together with the fact that $\GRAD:\Poly{0,k}(T)\to\Goly{k-1}(T)$ is surjective then shows that $\bvec{v}_{\cvec{G},T}=\Gproj{k-1}{T}\bvec{v}_T=\bvec{0}$.
  Since $\pdiv\uvec{v}_T=\bvec{0}$, the relation \eqref{eq:pdivT:Goly.k.perp} obviously yields $\bvec{v}_{\cvec{G},T}^{\perp}=\bvec{0}$, which concludes the proof of $\uvec{v}_T=\uvec{0}$.

  The consistency properties \eqref{eq:().grad.cons}--\eqref{eq:().div.cons} follow easily from the consistency properties \eqref{eq:pgradT:polynomial.consistency}, \eqref{eq:pcurlT:polynomial.consistency} and \eqref{eq:pdivT:polynomial.consistency} of the potential reconstructions.
\end{proof}


\section*{Acknowledgements}

D. A. Di Pietro acknowledges the partial support of \emph{Agence Nationale de la Recherche} (grants ANR-15-CE40-0005 and ANR-17-CE23-0019).
J. Droniou was partially supported by the Australian Government through the \emph{Australian Research Council}'s Discovery Projects funding scheme (project number DP170100605).
F. Rapetti  warmly thanks the CASTOR team at INRIA Sophia-Antipolis for the delegation in 2019, during which this work was completed.


\begin{footnotesize}
  \bibliographystyle{plain}
  \bibliography{ddr}
\end{footnotesize}

\end{document}